\theoremstyle{plain}
\newtheorem{theorem}{Theorem}[section]
\newtheorem{corollary}[theorem]{Corollary}
\newtheorem{proposition}[theorem]{Proposition}
\newtheorem{lemma}[theorem]{Lemma}
\theoremstyle{definition}
\newtheorem{remark}[theorem]{Remark}
\definecolor{lyucol}{rgb}{0.5, 0.1, 0.1}
\definecolor{gray}{rgb}{0.5, 0.5, 0.5}
\newlength{\dhatheight}
\pgfplotsset{compat = 1.17}
\newcommand{\eps}{\varepsilon}
\newcommand{\e}{\mathrm{e}}
\newcommand{\Po}{\mathrm{Po}}
\renewcommand{\Pr}{\mathbb{P}}
\newcommand{\cV}{\mathcal{V}}
\newcommand{\cC}{\mathcal{C}}
\newcommand{\cA}{\mathcal{A}}
\newcommand{\cE}{\mathcal{E}}
\newcommand{\cS}{\mathcal{S}}
\newcommand{\cO}{\mathcal{O}}
\title{\scshape
Color-avoiding percolation on the Erd\H{o}s-R\'enyi random graph}
\author[1,2]{Lyuben Lichev}
\author[2,3]{Bruno Schapira}
\affil[1]{Univ. Jean Monnet, Saint-Etienne, France}
\affil[2]{Univ. Claude Bernard Lyon 1, Lyon, France}
\affil[3]{Univ. Aix-Marseille, Marseille, France}
\begin{document}

\maketitle
 
\begin{abstract}
We consider a recently introduced model of color-avoiding percolation (abbreviated CA-percolation) defined as follows. Every edge in a graph $G$ is colored in some of $k\ge 2$ colors. Two vertices $u$ and $v$ in $G$ are said to be \emph{CA-connected} if $u$ and $v$ may be connected using any subset of $k-1$ colors.  CA-connectivity defines an equivalence relation on the vertex set of $G$ whose classes are called \emph{CA-components}.

We study the component structure of a randomly colored Erd\H{o}s-R\'enyi random graph of constant average degree. We distinguish three regimes for the size of the largest component: a supercritical regime, a so-called intermediate regime, and a subcritical regime, in which the largest CA-component has respectively  linear, logarithmic, and bounded size. Interestingly, in the subcritical regime, the bound is deterministic and given by the number of colors.
\end{abstract}
\noindent
Keywords: color-avoiding percolation, Erd\H{o}s-R\'enyi random graph.\\\\
\noindent
MSC Class: 05C80, 60C05, 60K35, 82B43

\section{Introduction}

In this paper, we are interested in the model of (edge-)color-avoiding percolation defined as follows. Fix a set of $k\ge 2$ colors and a graph $G$, and color every edge of $G$ in at least one color. We say that two vertices $u$ and $v$ in $G$ are \emph{color-avoiding connected}, or \emph{CA-connected} for short, if $u$ and $v$ may be connected using any subset of $k-1$ colors. In fact, CA-connectivity defines an equivalence relation on the vertex set of $G$ whose classes are called \emph{CA-components}. The model has been motivated by a number of real-world applications, for example, avoiding a set of mistrusted information channels (where colors correspond to eavesdroppers), or avoiding a set of possibly corrupted links in a network. In a sense, a network with large CA-components may be considered resistant to attacks from a set of adversaries where the adversaries control all channels but can only attack the network separately.

Color-avoiding percolation was introduced by Krause, Danziger, and Zlati\'c~\cite{KDZ16, KDZ17}. In their work, the authors were interested in vertex-colored graphs and analyzed a vertex analog of CA-connectivity. While some empirical observations were made for scale-free networks, the focus was put on Erd\H{o}s-R\'enyi random graphs due to their better CA-connectivity~\cite{KDZ16}. In a subsequent work, Kadovi\'c, Krause, Caldarelli, and Zlati\'c~\cite{KKCZ18} defined mixed CA-percolation where both vertices and edges have colors. To a large extent, each of these foundational papers based their conclusions on experimental evidence.

Precise mathematical treatment of the subject is challenging for several reasons. Firstly, unlike connected components in a graph, the CA-components cannot be found by a local exploration of the graph in general. Indeed, note that even if two vertices are neighbors in the graph, all paths that connect them and avoid a certain color may be rather long. Secondly, while one edge in a graph may merge at most two components or divide a single connected component into two parts, a single colored edge may lead to a merging of a lot of different CA-components. For example, consider two parallel paths of length $2\ell+1$, and for every $i\in [2\ell+1]$, connect the $i$-th vertex in one of the paths with the $i$-th vertex in the other path. Also, color the odd edges in the paths in blue, the even edges in red, and the edges between them in green, see Figure~\ref{fig 1}. Then, it may be easily checked that all CA-components in the obtained graph are of size 1 while adding a blue edge between the last vertices in the two paths creates $2\ell+1$ components of size 2 at once. Last but not least, in the framework of random graphs, deleting the edges in two different colors from the original graph leads to two distinct subgraphs that can have a large intersection, and can therefore be highly correlated.

\begin{figure}
\centering
\begin{tikzpicture}[line cap=round,line join=round,x=1cm,y=1cm]
\clip(-9.419216033144751,1.85) rectangle (6.330706009212033,3.15);
\draw [line width=0.8pt,dotted] (-8,3)-- (-7,3);
\draw [line width=0.8pt,dash pattern=on 4pt off 4pt] (-7,3)-- (-6,3);
\draw [line width=0.8pt,dotted] (-6,3)-- (-5,3);
\draw [line width=0.8pt,dash pattern=on 4pt off 4pt] (-5,3)-- (-4,3);
\draw [line width=0.8pt,dotted] (-4,3)-- (-3,3);
\draw [line width=0.8pt,dotted] (-8,2)-- (-7,2);
\draw [line width=0.8pt,dash pattern=on 4pt off 4pt] (-7,2)-- (-6,2);
\draw [line width=0.8pt,dotted] (-6,2)-- (-5,2);
\draw [line width=0.8pt,dash pattern=on 4pt off 4pt] (-5,2)-- (-4,2);
\draw [line width=0.8pt,dotted] (-4,2)-- (-3,2);
\draw [line width=0.8pt] (-8,3)-- (-8,2);
\draw [line width=0.8pt] (-7,3)-- (-7,2);
\draw [line width=0.8pt] (-6,3)-- (-6,2);
\draw [line width=0.8pt] (-5,3)-- (-5,2);
\draw [line width=0.8pt] (-4,3)-- (-4,2);
\draw [line width=0.8pt,dotted] (0,3)-- (1,3);
\draw [line width=0.8pt,dash pattern=on 4pt off 4pt] (1,3)-- (2,3);
\draw [line width=0.8pt,dotted] (2,3)-- (3,3);
\draw [line width=0.8pt,dash pattern=on 4pt off 4pt] (3,3)-- (4,3);
\draw [line width=0.8pt,dotted] (4,3)-- (5,3);
\draw [line width=0.8pt,dotted] (0,2)-- (1,2);
\draw [line width=0.8pt,dash pattern=on 4pt off 4pt] (1,2)-- (2,2);
\draw [line width=0.8pt,dotted] (2,2)-- (3,2);
\draw [line width=0.8pt,dash pattern=on 4pt off 4pt] (3,2)-- (4,2);
\draw [line width=0.8pt,dotted] (4,2)-- (5,2);
\draw [line width=0.8pt] (0,3)-- (0,2);
\draw [line width=0.8pt] (1,3)-- (1,2);
\draw [line width=0.8pt] (2,3)-- (2,2);
\draw [line width=0.8pt] (3,3)-- (3,2);
\draw [line width=0.8pt] (4,3)-- (4,2);
\draw [line width=0.8pt,dotted] (5,3)-- (5,2);
\draw [rotate around={90:(0,2.5)},line width=0.3pt] (0,2.5) ellipse (0.6099019513592782cm and 0.34925691155917815cm);
\draw [rotate around={90:(1,2.5)},line width=0.3pt] (1,2.5) ellipse (0.6099019513592774cm and 0.3492569115591777cm);
\draw [rotate around={90:(2,2.5)},line width=0.3pt] (2,2.5) ellipse (0.6099019513592759cm and 0.3492569115591768cm);
\draw [rotate around={90:(3,2.5)},line width=0.3pt] (3,2.5) ellipse (0.6099019513592845cm and 0.3492569115591819cm);
\draw [rotate around={90:(4,2.5)},line width=0.3pt] (4,2.5) ellipse (0.6099019513592845cm and 0.3492569115591819cm);
\begin{scriptsize}
\draw [fill=black] (-8,3) circle (1.5pt);
\draw [fill=black] (-7,3) circle (1.5pt);
\draw [fill=black] (-6,3) circle (1.5pt);
\draw [fill=black] (-5,3) circle (1.5pt);
\draw [fill=black] (-4,3) circle (1.5pt);
\draw [fill=black] (-3,3) circle (1.5pt);
\draw [fill=black] (-8,2) circle (1.5pt);
\draw [fill=black] (-7,2) circle (1.5pt);
\draw [fill=black] (-6,2) circle (1.5pt);
\draw [fill=black] (-5,2) circle (1.5pt);
\draw [fill=black] (-4,2) circle (1.5pt);
\draw [fill=black] (-3,2) circle (1.5pt);
\draw [fill=black] (0,3) circle (1.5pt);
\draw [fill=black] (1,3) circle (1.5pt);
\draw [fill=black] (2,3) circle (1.5pt);
\draw [fill=black] (3,3) circle (1.5pt);
\draw [fill=black] (4,3) circle (1.5pt);
\draw [fill=black] (5,3) circle (1.5pt);
\draw [fill=black] (0,2) circle (1.5pt);
\draw [fill=black] (1,2) circle (1.5pt);
\draw [fill=black] (2,2) circle (1.5pt);
\draw [fill=black] (3,2) circle (1.5pt);
\draw [fill=black] (4,2) circle (1.5pt);
\draw [fill=black] (5,2) circle (1.5pt);
\end{scriptsize}
\end{tikzpicture}
\caption{Blue, red and green edges are represented by dotted, dashed, and solid segments, respectively. One may easily check that on the left, every vertex is alone in its CA-component, while on the right, the addition of a single blue edge leads to the appearance of many CA-components of size 2.} 
\label{fig 1}
\end{figure}
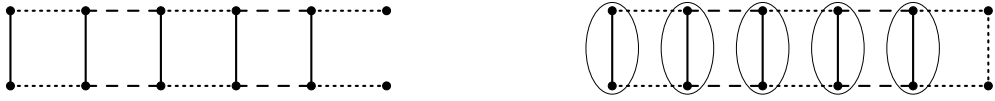

To our knowledge, a rigorous mathematical treatment of the behavior of color-avoiding percolation on random graphs has only been studied in a recent work of Fekete, Molontay, R\'ath and Varga~\cite{RVFM22}. In their paper, they show that under a certain subcriticality assumption, the number of CA-components of a given fixed size renormalized by $n$ converges in probability to a fixed constant. Moreover, under the same assumption, it is proved that the size of the largest CA-component renormalized by $n$ converges in probability to a fixed constant. They also characterize the behavior of that constant in the barely supercritical regime.

Our goal here is to go further in the analysis of the structure of CA-components in a randomly colored Erd\H{o}s-R\'enyi random graph around the threshold of appearance of a giant component. 
Apart from simplifying the approach of~\cite{RVFM22} (or, more precisely, the earlier version of this work~\cite{RVFM22.0}) and getting rid of their additional hypothesis, we show that the parameter space may be naturally divided into three regimes. In each of them, we conduct a careful analysis of the size of the largest CA-component as well as the number of small CA-components.

\subsection{Main results} 
For a positive integer $m$, we denote $[m] = \{1,\ldots,m\}$. In particular, we reserve the notation $[k]$ to denote the set of colors, and the notation $[n]$ for the set of vertices of our graphs. 
Recall that for $p\in [0,1]$, the Erd\H{o}s-R\'enyi random graph $G(n,p)$, or ER random graph for short, with parameters $n$ and $p$ is the graph on the vertex set $[n]$ where the edge between any two distinct vertices is present with probability $p$, independently from all other edges.   

Consider now a non-increasing sequence of positive real numbers $\lambda_1\ge \dots \ge \lambda_k$, and define a family of $k$ independent Erd\H{o}s-R\'enyi random graphs $G_i= G(n,\tfrac{\lambda_i}{n})$ for $i\in [k]$ on the same vertex set $[n]$ (alternatively, this family can be seen as a multigraph $([n],E_1,\dots,E_k)$ where $E_i$ is the edge set of $G_i$). In order to easily refer to and distinguish the graphs $(G_i)_{i=1}^k$, we say that for every $i\in [k]$, the edges of $G_i$ are given color $i$. Define further
$$\Lambda = \lambda_1 + \dots +\lambda_k,\quad \text{and}\quad
\lambda_i^* = \Lambda - \lambda_i \quad \text{for every }i\in[k].$$
In particular, $\lambda_1^* \le \lambda_2^*\le \dots \le \lambda_k^*$. Also, we set 
\begin{equation}\label{eq:defG}
G = G_1\cup \ldots\cup G_k,  
\end{equation}
and for $I\subseteq [k]$, 
$$G_I= \bigcup_{i\in I} G_i, \quad \text{and}\quad G^I = \bigcup_{i\in [k]\setminus I} G_i,$$
with the shorthand notation $G_i$ and $G^i$, respectively, when $I=\{i\}$.

Recall that two vertices $u$ and $v$ in $G$ are CA-connected if $u$ and $v$ are connected in each of the graphs $(G^{i})_{i=1}^k$. Moreover, CA-connectivity is an equivalence relation with classes called CA-components. The CA-component of a vertex $u\in [n]$ is denoted by $\widetilde \cC(u)$, and $|\widetilde \cC(u)|$ denotes its size. Our main object of interest is the size of the largest CA-component in $G$. Under the assumption that $\sum_{i\in I} \lambda_i<1$ for every set $I\subseteq [k]$ of size $k-2$, it was shown in~\cite{RVFM22.0} that there exists a constant $a\in [0,1]$ such that
\begin{equation}\label{eq:LLN.supercritical}
\frac {\max_{u\in [n]}|\widetilde \cC(u)|}n \xrightarrow[n\to \infty]{\Pr} a,
\end{equation}
and that the constant $a$ is positive if and only if $\lambda_1^*>1$, which is called the \emph{supercritical regime}. 
Here, we improve this result in three directions:
\begin{itemize}
\item We observe that~\eqref{eq:LLN.supercritical} can be easily derived from the convergence in distribution of the neighborhood of a typical vertex, and this does not require any additional technical assumptions.  
\item In the subcritical regime (that is, when $\lambda_k^*<1$), we prove that asymptotically almost surely (a.a.s.), any CA-component has size at most $k$. Moreover, the random variables $(N_\ell)_{\ell=2}^{k}$ which count the  number of CA-components of size $2,\ldots,k$, respectively, jointly converge in distribution to independent Poisson random variables. 
\item We start investigating the more difficult intermediate regime (that is, when $\lambda_k^* > 1 > \lambda_1^*$). In this setting, we show a weak law of large numbers under the assumption that $\lambda_k^*>1>\lambda_{k-1}^*$. 
\end{itemize}
These results are summarized in the following theorem. 

\begin{theorem}\label{thm:k>2}
Suppose that $k\ge 2$.
\begin{enumerate}[(i)]
    \item\label{pt i} There exists $a_1\in [0,1)$,  such that
    \[\frac{\max_{u\in [n]}|\widetilde \cC(u)|}{n}\xrightarrow[n\to \infty]{\Pr} a_1.\]
    Moreover, one has $a_1>0$ if and only if  $\lambda_1^* > 1$.
    \item\label{pt ii} If $\lambda_k^* > 1 > \lambda_{k-1}^*$, then there is a positive constant $a_2$ such that
    \[\frac{\max_{u\in [n]}|\widetilde \cC(u)|}{\log n}\xrightarrow[n\to \infty]{\Pr} a_2.\]
    \item\label{pt iii} If $ \lambda_k^*<1$, then a.a.s.\ $\max_{u\in [n]}|\widetilde \cC(u)| \le k$. Moreover, there are positive constants $\beta_2, \ldots, \beta_k$, such that
    \[(N_2, \ldots, N_k)\xrightarrow[n\to \infty]{d} \bigotimes_{\ell=2}^k  \Po(\beta_\ell),\]
    that is, the random variables $(N_\ell)_{\ell=2}^k$ jointly converge in distribution to $k-1$ independent Poisson variables with means $\beta_2, \ldots, \beta_k$ as $n\to \infty$.
\end{enumerate}
\end{theorem}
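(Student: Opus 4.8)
We will use throughout two elementary facts. For each $i\in[k]$, the graph $G^i=\bigcup_{j\ne i}G_j$ may be coupled with $G(n,\lambda_i^*/n)$ so that the two differ only in the edges carrying at least two colours, of which there are $O(1)$ in expectation; hence $G^i$ is supercritical, critical, or subcritical according as $\lambda_i^*>1$, $=1$, or $<1$, with the corresponding component asymptotics of $G(n,c/n)$. Moreover $\widetilde\cC(u)=\bigcap_{i\in[k]}\cC_{G^i}(u)$, where $\cC_{G^i}(u)$ denotes the component of $u$ in $G^i$; since $\lambda_1^*=\min_i\lambda_i^*$, this already gives $|\widetilde\cC(u)|\le|\cC_{G^1}(u)|$. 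For part~(i): if $\lambda_1^*\le1$ then $|\cC_{G^1}(u)|=o(n)$ uniformly in $u$, so $a_1=0$. If $\lambda_1^*>1$, every $G^i$ is supercritical with a unique giant $\cG_i$, and for $u\in\bigcap_i\cG_i$ one has $\widetilde\cC(u)=\bigcap_i\cG_i=:\cG^*$, while every other vertex lies in a CA-component of size $O(\log n)$; thus $\max_u|\widetilde\cC(u)|/n$ and $|\cG^*|/n$ have the same limit. I would identify that limit, $a_1$, via local weak convergence of the coloured neighbourhood of a uniform vertex of $G$ to a Galton--Watson tree with $\Po(\lambda_i)$ offspring along colour-$i$ edges independently over $i\in[k]$, taking $a_1$ to be the probability that the root's cluster is infinite in each of the $k$ colour-deleted trees; the usual truncation argument (replace ``infinite'' by ``size $\ge K$'', use that non-giant components of a supercritical Erd\H{o}s--R\'enyi graph are $O(\log n)$, and let $K\to\infty$) upgrades this to convergence in probability. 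Finally $a_1>0$ by the FKG inequality (the events $\{u\in\cG_i\}$ are increasing in the independent colour classes, each of probability $\rho_i>0$), and $a_1\le\rho_1<1$.

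For part~(iii), every $G^i$ is subcritical, so a.a.s.\ each $G^i$ has all components of size $O(\log n)$ and no component with more than one cycle. The crux is a structural lemma: if $|A|=m\ge2$ and $A$ is CA-connected, then there is a connected $\hat H\subseteq G$ with $V(\hat H)\supseteq A$ such that deleting the colour-$i$ edges of $\hat H$ keeps $A$ connected for every $i$; one may take $\hat H=\bigcup_iT_i$, where $T_i$ is a minimal tree of $G^i$ spanning $A$. Writing $c(e)\subseteq[k]$ for the colour set of an edge $e$ and $W(\hat H)=\sum_{e\in\hat H}|c(e)|$, the expected number of copies of a given such $\hat H$ in $G$ is of order $n^{|V(\hat H)|-W(\hat H)}$. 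Since each $T_i\subseteq\hat H$ spans $A$ and avoids colour $i$, if $\hat H$ were a tree its unique minimal subtree spanning $A$ would lie in every $T_i$ and hence avoid every colour, so it would be edgeless and $m=1$; thus for $m\ge2$ the (connected) graph $\hat H$ contains a cycle, and $W(\hat H)\ge|E(\hat H)|\ge|V(\hat H)|$. Analysing the ``critical'' case $W(\hat H)=|V(\hat H)|$ — then all edges of $\hat H$ are single-coloured, $\hat H$ is unicyclic with every leaf in $A$, and each maximal degree-$2$ path of $\hat H$ uses colours from a set of size $\le k-1$ — shows that no such $\hat H$ exists once $m\ge k+1$, and that for $2\le m\le k$ these structures form a finite family $\cS_m$ (the simplest member being an $m$-cycle whose $m$ edges receive $m$ distinct colours). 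A first-moment bound over $\cS_2,\dots,\cS_k$, combined with the a.a.s.\ structure of the subcritical $G^i$ to discard the non-critical (larger-$W$) candidates, gives $\max_u|\widetilde\cC(u)|\le k$ a.a.s. For the Poisson limit I would write $N_\ell$ as a sum of indicators over embeddings of members of $\cS_\ell$, show $\E{N_\ell}\to\beta_\ell:=\sum_{S\in\cS_\ell}\mu_S$ for explicit constants $\mu_S$, check that the joint factorial moments of $(N_2,\dots,N_k)$ converge to the corresponding products (distinct types being generically edge-disjoint, which gives asymptotic independence), and conclude by the method of moments. I expect this first-moment/structural step to be the main obstacle: to control candidate structures $\hat H$ of size up to $O(\log n)$ one must exploit the colour constraints sharply enough to see that only the finite family of ``critical'' structures contributes.

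For part~(ii), the hypothesis $\lambda_k^*>1>\lambda_{k-1}^*$ makes $G^k$ the unique supercritical colour-deleted graph, with giant $\cG_k$, while $G^1,\dots,G^{k-1}$ are subcritical; since $\widetilde\cC(u)\subseteq\cC_{G^1}(u)=O(\log n)$, no CA-component is of more than logarithmic size, and a CA-component contained in a non-giant component of $G^k$ is genuinely smaller, so the extremal CA-component is governed by vertex sets $A\subseteq\cG_k$ that are connected in each of $G^1,\dots,G^{k-1}$. The largest such $A$ has size $(a_2+o(1))\log n$, where $1/a_2$ is the exponential rate of the event that a tree on $s$ vertices is simultaneously present (with an admissible colouring) in $G^1,\dots,G^{k-1}$ and lies inside $\cG_k$: the upper bound on $\max_u|\widetilde\cC(u)|$ follows from a first-moment estimate over all candidate sets, and the matching lower bound from a second-moment argument, following the standard route to the largest component of a subcritical Erd\H{o}s--R\'enyi graph but with the added ``multi-colour connectivity inside the $G^k$-giant'' constraints; having a single supercritical colour-deleted graph is exactly what yields a clean variational expression for $a_2$. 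The remaining work — the local-limit computation in part~(i) and the first/second-moment matching in part~(ii) — is routine once $a_1$ and $a_2$ are identified as solutions of the relevant fixed-point and variational problems.
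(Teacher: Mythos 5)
Your part~(i) follows the paper's own route (local weak convergence to a coloured BGW tree, truncation of ``infinite'' at level $K$ using that non-giant components are $\mathcal O(\log n)$, FKG for positivity) and is essentially correct. The genuine gaps are in parts~(ii) and~(iii), and in both cases the missing ingredient is the same: a way to exclude candidate structures whose size is of order $\log n$, which a first-moment count over shapes cannot provide. In part~(iii), the critical witnesses $\hat H$ with $W(\hat H)=|V(\hat H)|$ are \emph{not} a finite family: a unicyclic $\hat H$ with single-coloured edges exists at every length $\ell$ up to the maximal component size of $G$, which is $\asymp\log n$; and for the non-critical shapes, although each placement has expected count $\mathcal O(n^{-1})$, there are $m^{\Theta(m)}$ shapes on $m$ vertices, so the first moment summed over shapes and placements of size $m$ is roughly $m^2(\e\Lambda)^m/n$, which is $o(1)$ only for $m\le c_1\log n$ with $c_1$ small (this is exactly Lemma~\ref{lem:2 cycles}). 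In the window between $c_1\log n$ and the true maximal component size the enumeration diverges and your ``discard the larger-$W$ candidates'' step fails. The paper closes this window with a conditional, non-enumerative argument (Lemma~\ref{lem:Ijconnect}): conditioning on $\cC^I(u)$, the probability that two of its vertices are joined in $G^j$ off that component is $\mathcal O(|\cC^I(u)|^2/n)$, which rules out CA-components living in large components of some $G^i$ (these being a.a.s.\ trees without repeated edges). You need some substitute for this. A smaller but real slip: your claim that a tree $\hat H$ forces $m=1$ is wrong, since a single edge carrying two colours lies in every $G^i$ and supports a CA-component of size $2$; these repeated edges contribute a separate Poisson term to $\beta_2$ and must be counted.

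In part~(ii) the step you call routine is where the work lies. You propose a first/second-moment computation over sets $A\subseteq\cC^k_{\max}$ that are simultaneously connected in $G^1,\dots,G^{k-1}$; but these $k-1$ graphs are strongly correlated (they share all colours other than the deleted one), so the ``exponential rate of a tree being simultaneously present in all of them'' is not a well-posed single-graph variational problem, and identifying which witnessing structures dominate is precisely the difficulty. The paper's resolution (Lemma~\ref{lem:intersection}, again built on Lemma~\ref{lem:Ijconnect}) is that a.a.s.\ any large set connected in each of $G^1,\dots,G^{k-1}$ is in fact connected in $G_k$ alone; this reduces the problem to intersecting a component of the single subcritical graph $G_k$ with the giant of the \emph{independent} graph $G^k$, which (after replacing the giant by a binomial vertex set) is the clean energy--entropy problem $\rho(q,\lambda)=\inf_x(I_\lambda+J_q(x))/x$ solved in Section~\ref{sec.prel.intersection}. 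Without that reduction your variational expression for $a_2$ is not justified, and the second-moment matching over correlated multi-colour tree witnesses is not something one can wave through.
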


Note that in the above result, we recover an analog of the famous transition from a linear to a logarithmic size for the largest connected component, which appears in an ER random graph $G(n,\tfrac{\lambda}{n})$ as the parameter $\lambda$ crosses the critical value $1$. However, unlike this standard setting, one original feature of CA-percolation is that there is an additional regime where the size of the largest CA-component remains bounded. Furthermore, the critical cases $\lambda_1^*=1$ and $\lambda_k^*=1$ appear to be more subtle, see Proposition~\ref{prop:critical} below and Section~\ref{sec:discussion} for more on this.

Now, we take a closer look at each of the three parts of Theorem~\ref{thm:k>2}. Part~\eqref{pt i} should not be very surprising as, on the one hand, if $\lambda_1^*\le 1$, then a.a.s.\ the largest connected component in $G^1$ has sublinear size, and thus the largest CA-component as well. On the other hand, if $\lambda_1^*>1$, then a.a.s.\ the largest CA-component is obtained by intersecting the largest connected components in each of the graphs $G^1,\dots,G^k$, which all have linear size. However, we stress that the whole point of the proof is to handle the lack of independence between these components. To do this, we use a local limit argument for a sequence of randomly colored ER random graphs, which also allows us to recover and improve in a simple way a result of~\cite{RVFM22.0,RVFM22} on the number of CA-components of given size. 
\begin{proposition}\label{prop:Nell}
There exists a sequence of non-negative real numbers $(\nu_\ell)_{\ell \ge 1}$ such that $\sum_{\ell \ge 1} \nu_\ell = 1-a_1$, and for each $\ell \ge 1$, 
\begin{equation}\label{eq:asympt.CA}
\frac{|\{u\in [n]: |\widetilde \cC(u)| = \ell\}|}{n} = \frac{\ell\cdot N_\ell}{n} \xrightarrow[n\to \infty]{\Pr} \mathbb \nu_\ell. 
\end{equation}
Moreover, one has $\nu_\ell >0$ for all $\ell \ge 1$ if and only if $\lambda_k^*>1$, while if $\lambda_k^*\le 1$, then $\nu_1=1$. 
\end{proposition}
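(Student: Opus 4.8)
Write $\cC^i(u)$ for the component of $u$ in $G^i$, so that $\widetilde{\cC}(u)=\bigcap_{i=1}^k\cC^i(u)$. The plan is to feed this identity into the local weak convergence of the randomly coloured graph $G$ already exploited for part~(i) of Theorem~\ref{thm:k>2}, enriched by the information ``$v$ lies in the giant component of $G^i$''. Call a colour $i$ \emph{good} for $u$ if $\cC^i(u)$ is the (unique) giant of $G^i$, which forces $\lambda_i^*>1$, and let $I(u)$ be the set of good colours. If $I(u)=[k]$ then $\widetilde{\cC}(u)$ is the intersection of the $k$ giants, i.e.\ the largest CA-component, of size $(a_1+o(1))n$; this occurs only if $\lambda_1^*>1$, in which case $a_1>0$ and the size tends to infinity, so for large $n$ it is never a fixed $\ell$. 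If $I(u)\neq[k]$, pick $i_0\notin I(u)$: then $\cC_{I(u)}(u)$, the component of $u$ in $G_{I(u)}$, is contained in $\cC^{i_0}(u)$ and so has size $O(\log n)$ a.a.s., and apart from $o(n)$ vertices $u$ whose neighbourhood is not tree-like one has $\bigcap_{i\notin I(u)}\cC^i(u)=\cC_{I(u)}(u)$, whence
\[
\widetilde{\cC}(u)=\big\{v\in\cC_{I(u)}(u):\ v\in\text{giant of }G^i\text{ for every }i\in I(u)\big\}
\]
(with the set understood to be $\{u\}$ when $I(u)=\varnothing$). In particular, for all but $o(n)$ vertices $u$, the value $|\widetilde{\cC}(u)|$ is read off from the enriched neighbourhood of $u$.

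Passing to the limit, the unmarked limit is the multicolour Galton--Watson tree $(T,o)$ in which every vertex has an independent $\Po(\lambda_i)$ number of colour-$i$ children; marking $v$ by $B_i(v)=1$ iff the cluster of $v$ in $T$ with its colour-$i$ edges deleted is infinite, the reduction above identifies the limit of $\ell N_\ell/n=n^{-1}|\{u:|\widetilde{\cC}(u)|=\ell\}|$ as
\[
\nu_\ell:=\mathbb{P}\Big(I(o)\neq[k]\ \text{and}\ \big|\{v\in T_{I(o)}:\ B_i(v)=1\text{ for every }i\in I(o)\}\big|=\ell\Big),
\]
where $I(o)=\{i:B_i(o)=1\}$ and $T_{I(o)}$ is the cluster of $o$ in $T$ using only colours of $I(o)$ (for $I(o)=\varnothing$ this reads $\ell=1$). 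Convergence of the mean $\mathbb{E}[\ell N_\ell/n]$ to $\nu_\ell$ follows from the enriched local limit of part~(i). For convergence in probability I would bound the second moment by pairing two independent uniform vertices, whose enriched neighbourhoods converge to two \emph{independent} copies of the marked tree, so $\mathbb{E}[(\ell N_\ell/n)^2]\to\nu_\ell^2$ and Chebyshev concludes. Summing over $\ell$ gives $\sum_{\ell\ge1}\nu_\ell=\mathbb{P}(I(o)\neq[k])=1-\mathbb{P}(B_1(o)=\dots=B_k(o)=1)=1-a_1$, the last expression being the value of $a_1$ established in the proof of part~(i).

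It remains to decide when $\nu_\ell>0$. If $\lambda_k^*\le1$, then each $G^i$ has parameter $\lambda_i^*\le1$, so $|\widetilde{\cC}(u)|\le|\cC^1(u)|$ where $\cC^1(u)$ has (uniformly) exponential tails; since $I(o)=\varnothing$ a.s.\ in the limit, uniform integrability gives $\mathbb{E}|\widetilde{\cC}(U)|\to1$ for a uniform vertex $U$, and Markov's inequality applied to $|\widetilde{\cC}(U)|-1\ge0$ yields $\mathbb{P}(|\widetilde{\cC}(U)|\ge2)\le\mathbb{E}|\widetilde{\cC}(U)|-1\to0$, so $\nu_1=1$ (the borderline $\lambda_k^*=1$, which forces all $\lambda_i$ equal, needs a slightly finer first-moment bound). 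If instead $\lambda_k^*=\lambda_1+\dots+\lambda_{k-1}>1$, then $\nu_1\ge e^{-\Lambda}>0$ since isolated vertices already have a singleton CA-component, and for $\ell\ge2$ I would produce $\Theta(n)$ explicit witnesses: $G^k$ has a.a.s.\ a unique giant $\Gamma$, and one counts configurations made of vertices $v_1,\dots,v_\ell$ joined consecutively by colour-$k$ edges, each $v_j$ having exactly one further incident edge, a colour-$1$ edge to a distinct vertex $w_j\in\Gamma$. For such a configuration, all $v_j$ lie in $\Gamma$ and are therefore pairwise connected in $G^k$, while the colour-$k$ path connects them in every $G^i$ with $i<k$, so $\{v_1,\dots,v_\ell\}\subseteq\widetilde{\cC}(v_1)$; and $\cC^1(v_1)=\{v_1,\dots,v_\ell\}$ because inside $G^1$ the only edges incident to the $v_j$'s are the colour-$k$ path edges, so in fact $\widetilde{\cC}(v_1)=\{v_1,\dots,v_\ell\}$. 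A routine first-moment estimate gives $\Theta(n)$ such configurations, so $\liminf_n\mathbb{P}(|\widetilde{\cC}(U)|=\ell)>0$ and hence $\nu_\ell>0$; together with the previous paragraph this proves the ``if and only if''.

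The step I expect to be the genuine obstacle is the enriched local limit: showing that ``$v$ belongs to the giant of $G^i$'' really converges jointly with the local picture and decorrelates between far-apart roots, despite the strong dependence among $G^1,\dots,G^k$, which share a large proportion of their edges. This is, however, exactly the phenomenon resolved in the proof of part~(i) of Theorem~\ref{thm:k>2}, which I would invoke directly; all the remaining ingredients are the routine first- and second-moment computations indicated above.
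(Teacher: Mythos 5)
Your overall strategy coincides with the paper's: identify the limit of $\ell N_\ell/n$ as the law of the size of the (suitably redefined) CA-component of the root of the coloured BGW tree, using the local limit enriched with giant-membership marks, and upgrade convergence of means to convergence in probability by a second-moment decorrelation argument. Your explicit formula for $\nu_\ell$ in terms of the good-colour set $I(o)$ agrees with the paper's $\Pr(|\widetilde \cC(\varnothing)|=\ell)$; your treatment of the case $I(u)=[k]$ via the intersection of the finite-graph giants is a clean substitute for the Kesten--Stigum argument the paper uses to show that the CA-component of the root is a.s.\ infinite on that event; and your witness construction for $\nu_\ell>0$ when $\lambda_k^*>1$ (a colour-$k$ path hung onto the giant of $G^k$ by pendant colour-$1$ edges) is a valid finite-graph alternative to the paper's computation performed directly in the limit tree.

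The genuine gap is the reduction $\bigcap_{i\notin I(u)}\cC^i(u)=\cC_{I(u)}(u)$ ``apart from $o(n)$ vertices whose neighbourhood is not tree-like''. This is exactly where the non-locality of CA-connectivity bites: a vertex $v$ can lie in $\cC^i(u)$ for every $i\notin I(u)$ without lying in $\cC_{I(u)}(u)$, by means of a cycle or a repeated edge inside the connected set $\bigcup_{i\notin I(u)}\cC^i(u)$ --- such configurations are precisely the size-$\ge 2$ CA-components analysed in Part~\eqref{pt iii}, so they do occur. Tree-likeness is automatic (from local convergence) only for neighbourhoods of \emph{fixed} radius, whereas the relevant set here has size of order $\log n$; moreover, when $\Lambda>1$ a positive proportion of vertices sit in the cycle-rich giant of $G$, so one must genuinely prove that only $o(n)$ vertices $u$ have a cycle or repeated edge inside the union of their \emph{finite} clusters $\cC^i(u)$, $i\notin I(u)$. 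This is true, but requires counting and conditioning arguments in the spirit of Lemma~\ref{lem:2 cycles}, Remark~\ref{rem:2 cycles} and Lemma~\ref{lem:Ijconnect}, not a bare appeal to tree-likeness. The paper avoids the issue altogether: it never claims such an identity, but works with the truncated sets $\widetilde \cC_{L,M}(u)$, which are measurable with respect to a fixed-radius neighbourhood, and controls the discrepancy with $\widetilde \cC(u)$ by the event that some non-giant cluster $\cC^i(u)$ has size at least $L$, whose probability is killed by~\eqref{eq:largecc.notmax} as $L\to\infty$; tree-likeness at scale $\log n$ is then never needed. A secondary point: your uniform-integrability/Markov argument for $\nu_1=1$ when $\lambda_k^*\le 1$ presupposes $|\widetilde \cC(U)|\to 1$ in distribution, which is essentially the statement being proved; the clean route is to read $\nu_1=1$ directly off your formula once the main convergence is established (and note in passing that $\lambda_k^*=1$ does not force the $\lambda_i$ to be equal).
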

Altogether, this proposition and Part~\eqref{pt i} of Theorem~\ref{thm:k>2} answer a question from~\cite{RVFM22.0} by showing that one can get rid of their technical assumption. 

For Part~\eqref{pt ii} of Theorem~\ref{thm:k>2}, the main observation is that the largest CA-component comes from intersecting a connected component in $G_k$ with the largest component of $G^k$. In fact this argument also gives us that the size of the largest CA-component in the critical case $\lambda_k^*=1>\lambda_{k-1}^*$ is tight.

\begin{proposition}\label{prop:critical}
Suppose that $\lambda_k^* = 1>\lambda_{k-1}^*$. Then,
    \[\sup_{n\ge 1}\ \Pr\Big(\max_{u\in [n]} |\widetilde\cC(u)|\ge M\Big) \xrightarrow[M\to \infty]{} 0.\]
\end{proposition}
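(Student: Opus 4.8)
Throughout write $L_{\max}:=\max_{u\in[n]}|\widetilde\cC(u)|$, and for a graph $\Gamma$ on $[n]$ let $\cC_\Gamma(u)$ denote the connected component of $u$ in $\Gamma$. The argument hinges on the elementary inclusion $\widetilde\cC(u)\subseteq\cC_{G^{k-1}}(u)\cap\cC_{G^{k}}(u)$ for all $u$, which holds because CA-connected vertices are in particular connected in $G^{k-1}$ and in $G^{k}$. Under the hypothesis $\lambda_k^*=1>\lambda_{k-1}^*$, the graph $G^{k-1}$ is distributed as $G(n,p)$ with $np=\lambda_{k-1}^*+O(1/n)$ (subcritical), while $G^{k}$ is distributed as $G(n,p)$ with $np=1+O(1/n)$ (critical). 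For $k=2$ these are just $G_2$ and $G_1$ and are independent; the (minor) extra work for $k\ge3$ is indicated at the end, so I first treat $k=2$.

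I use two inputs, both uniform in $n$. First, the subcritical tail bound $\Pr(|\cC_{G^{k-1}}(u)|\ge s)\le Ce^{-cs}$ with $c=c(\lambda_{k-1}^*)>0$, obtained by dominating the exploration of $\cC_{G^{k-1}}(u)$ by a subcritical Galton--Watson tree with binomial offspring. Second, the $M$-point estimates, for arbitrary fixed distinct $v_1,\dots,v_M$ and all $n$ large,
\[\Pr\big(v_2,\dots,v_M\in\cC_{G^{k-1}}(v_1)\big)\le A_M\,n^{-(M-1)},\qquad \Pr\big(v_2,\dots,v_M\in\cC_{G^{k}}(v_1)\big)\le B_M\,n^{-M/3},\]
with $A_M,B_M\le C^M M!$. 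These follow from the identity $\Pr(v_2,\dots,v_M\in\cC_\Gamma(v_1))=\E{\prod_{j=1}^{M-1}\tfrac{|\cC_\Gamma(v_1)|-j}{n-j}}\le\frac{2^{M}}{n^{M-1}}\,\E{|\cC_\Gamma(v_1)|^{M-1}}$ (which holds since, conditionally on its size, $\cC_\Gamma(v_1)$ is uniform among subsets of $[n]$ containing $v_1$), by bounding the last expectation: for $\Gamma=G^{k-1}$ the tail bound gives $\E{|\cC_{G^{k-1}}(v_1)|^{M-1}}\le C^M(M-1)!$; for $\Gamma=G^{k}$ the classical critical estimates $\Pr(|\cC_{G^k}(v_1)|\ge s)\le C_0 s^{-1/2}$ for $s\le n^{2/3}$ and $\Pr(|\cC_{G^k}(v_1)|\ge s)\le C_0 e^{-c_0 s^{3}/n^{2}}$ for $s\ge n^{2/3}$ give $\E{|\cC_{G^k}(v_1)|^{M-1}}\le C^M M!\,n^{(2M-3)/3}$, whence the gain $n^{(2M-3)/3-(M-1)}=n^{-M/3}$.

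Now fix $M\ge 4$ and split on $n$. If $n\le e^{cM/2}$, then by the inclusion above $\{L_{\max}\ge M\}$ forces the existence of a component of $G^{k-1}$ of size $\ge M$, so a union bound over vertices together with the tail bound give $\Pr(L_{\max}\ge M)\le nCe^{-cM}\le Ce^{-cM/2}$. If $n> e^{cM/2}$, perform a union bound over the $M$-element subsets of a hypothetical CA-component of size $\ge M$: each such subset lies in one component of $G^{k-1}$ and in one of $G^{k}$, so by independence
\[\Pr(L_{\max}\ge M)\le \binom{n}{M}\,A_M\,n^{-(M-1)}\,B_M\,n^{-M/3}\le \frac{A_M B_M}{M!}\,n^{1-M/3}.\]
Since $1-M/3<0$, on the range $n>e^{cM/2}$ this is at most $\tfrac{A_M B_M}{M!}\,e^{(cM/2)(1-M/3)}\le C^{2M}M!\,e^{-cM^2/6+cM/2}$, which tends to $0$ as $M\to\infty$ because $e^{-cM^2/6}$ dominates $C^{2M}M!=e^{M\log M+O(M)}$. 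Combining the two regimes, $\sup_{n\ge1}\Pr(L_{\max}\ge M)\le\max\big(Ce^{-cM/2},\,C^{2M}M!\,e^{-cM^2/6+cM/2}\big)\xrightarrow[M\to\infty]{}0$, which is the assertion.

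For $k\ge3$ one conditions on $H:=G_1\cup\dots\cup G_{k-2}$, which is subcritical with parameter $\Lambda-\lambda_{k-1}-\lambda_k=1-\lambda_{k-1}<1$; conditionally on $H$ the graphs $G^{k}=H\cup G_{k-1}$ and $G^{k-1}=H\cup G_k$ are independent, and after contracting the components of $H$ one faces the same picture — a critical, respectively subcritical, inhomogeneous random graph whose vertex weights (the $H$-component sizes) have all exponential moments — so the $M$-point estimates carry over with constants of the same order; the exceptional event that $H$ has a component larger than $n^{1/3}$ has probability $\le nCe^{-cn^{1/3}}$, which on $n>e^{cM/2}$ contributes a doubly-exponentially small (in $M$), hence harmless, term. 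The only delicate step in the whole argument is establishing the $M$-point estimates with the stated crude but explicit dependence $A_M,B_M\le C^M M!$ — in particular the critical one, which uses the large-deviation tail of cluster sizes in critical $G(n,p)$ — since it is exactly this control that lets the Gaussian factor $e^{-cM^2/6}$ win against the factorial; the rest is bookkeeping.
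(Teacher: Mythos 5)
For $k=2$ your argument is sound and in fact more quantitative than the paper's: the inclusion $\widetilde\cC(u)\subseteq\cC_{G^{1}}(u)\cap\cC_{G^{2}}(u)$, the $M$-point estimates with explicit constants $C^{M}M!$, and the two-regime split in $n$ (subcritical union bound for $n\le e^{cM/2}$, the $\binom{n}{M}$-union bound for $n>e^{cM/2}$) correctly deliver the uniformity in $n$, granted the classical critical tail bounds you invoke. (A minor bookkeeping point: the contribution of $\{|\cC_{G^{k}}(v_1)|>n^{2/3}\}$ to $\E{|\cC_{G^k}(v_1)|^{M-1}}$ is of order $n^{2(M-1)/3}$ rather than $n^{(2M-3)/3}$, which weakens $n^{-M/3}$ to $n^{-(M-1)/3}$; the final exponent $1-(M-1)/3$ is still negative for $M\ge 5$, so nothing breaks.)

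The gap is in the extension to $k\ge3$, and it is not a matter of "carrying over the constants": the strategy itself cannot work there. For $k\ge 3$ the graphs $G^{k-1}$ and $G^{k}$ share $H:=G_1\cup\dots\cup G_{k-2}$, which is an Erd\H{o}s--R\'enyi graph of constant positive average degree $\lambda_1+\dots+\lambda_{k-2}>0$. Any $M$ vertices lying in a single component of $H$ are automatically connected in both $G^{k-1}$ and $G^{k}$, and for every fixed $M$ the graph $H$ a.a.s.\ contains components of size at least $M$ (their expected number is of order $n$). Hence
\[\Pr\big(\exists\, v_1,\dots,v_M \text{ distinct, connected in both } G^{k-1} \text{ and } G^{k}\big)\xrightarrow[n\to\infty]{}1,\]
so the event you are union-bounding does not have small probability, and correspondingly the joint $M$-point function satisfies $\Pr(v_2,\dots,v_M\in\cC_{G^{k-1}}(v_1)\cap\cC_{G^{k}}(v_1))\ge\Pr(v_2,\dots,v_M\in\cC_{H}(v_1))\asymp n^{-(M-1)}$, with no extra gain of $n^{-M/3}$ from the critical graph. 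Your proposed fix (conditioning on $H$ and contracting its components) does not repair this: when all of $v_1,\dots,v_M$ fall into one $H$-blob, both conditional probabilities equal $1$ and the product bound collapses. What is needed is precisely the structural input your proof omits: the paper's Lemma~\ref{lem:Ijconnect} and Lemma~\ref{lem:intersection} show that any \emph{large} CA-component must be connected in the single color class $G_k$ alone (not merely in $G^{k-1}\supseteq G_k$), and $G_k$ \emph{is} genuinely independent of the critical graph $G^{k}$; only after this reduction does an intersection/union-bound computation of the type you perform go through for all $k$. Without an argument ruling out large CA-components whose $G^{k-1}$-connectivity is achieved through $H$, the proof is incomplete for $k\ge 3$.
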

The question of whether in this critical case the size of the largest CA-component converges in distribution remains open. In fact, even knowing if the support is asymptotically bounded by a deterministic constant is unknown.

In the remaining intermediate regime, i.e. when $\lambda_{k-1}^* \ge 1 > \lambda_1^*$, one can easily show that the size of the largest CA-component is still of logarithmic order. However, proving concentration seems to be a challenging problem, which remains out of reach with our present techniques.

Part~\eqref{pt iii}, as we already mentioned, is arguably the most original part. In this case, the largest connected component in each of the graphs $G^1,\dots,G^k$ has only logarithmic size, which makes it very difficult for a pair of vertices to be connected in all $k$ graphs. This explains, at least heuristically, why the largest CA-component has bounded size, although the fact that the bound is deterministic may appear as unexpected. Actually, as we shall see, the main reason for which the upper bound is given by the number of colors is that a.a.s.\ every CA-component is either contained in a single edge, or all paths connecting two of its vertices and avoiding some color are contained in a single cycle.

Finally, we provide an explicit expression of $\beta_k$ in terms of 
$\lambda_1,\ldots,\lambda_k$ in Remark~\ref{rem:betas}. While it is possible to do the same for the other values of $\beta_m$ for $m<k$, the formula and the computation tend to get more and more tedious as $m$ decreases.

\vspace{0.2cm}
We now comment on the proofs themselves in more detail.

\paragraph{Outline of the proofs.} As already mentioned, the proofs of Part~\eqref{pt i} of Theorem~\ref{thm:k>2} and Proposition~\ref{prop:Nell} are based on the well-known local convergence of Erd\H{o}s-R\'enyi random graphs to Bienaym\'e-Galton-Watson trees, or BGW trees for short, with Poisson offspring distribution.
In our setting of edge-colored ER random graph, the local limit can be seen as a BGW tree with $\text{Po}(\Lambda)$ offspring distribution where additionally each edge is colored with color $i\in [k]$ with probability $\lambda_i/\Lambda$, independently for different edges. Then, the constant $a_1$ is just the probability that for every $i\in [k]$, the connected component of the root is infinite when we erase the edges colored with color $i$. The same approach allows to prove Proposition~\ref{prop:Nell}, with  $(\nu_\ell)_{\ell \ge 1}$ being the probability distribution of the size of the CA-component of the root in the aforementioned BGW tree. 
However, here the notion of CA-component must be suitably adjusted, as already noticed in~\cite{RVFM22.0}: for each $i\in [k]$, two vertices are declared to be connected when erasing color $i$ if either they are indeed connected by a path in the BGW tree that avoids color $i$, or if they both connect to infinity by paths avoiding color $i$. It is then not difficult to see that, for this notion of CA-connectivity, the constant $a_1$ is also the probability that the CA-component of the root is infinite.

On the other hand, the proofs of~\eqref{pt ii}, Proposition~\ref{prop:critical} and~\eqref{pt iii} have a different flavor. Firstly, we outline the proof strategy for~\eqref{pt ii}. To begin with, we make use of the following fact, which might be of independent interest. Consider a subcritical Erd\H{o}s-R\'enyi random graph $G(n,\tfrac{\lambda}{n})$, and independently color each of its vertices in black with some probability $q\in (0,1]$. Then, we show that the maximal number of black vertices which are all connected in $G(n,\tfrac{\lambda}{n})$ divided by $\log n$ converges in probability towards a positive constant. This result extends the concentration for the size of the largest component in a subcritical ER random graph, which corresponds to $q=1$. The proof is obtained by solving an energy-entropy optimization problem, where the energy corresponds to the cost of having a large number of black vertices in a given connected component of $G(n,\tfrac{\lambda}{n})$, while the entropy factor comes from the fact that as $b$ decreases, the number of connected components of size $b\log n$ in $G(n,\tfrac{\lambda}{n})$ increases. The link with~\eqref{pt ii} arises as one realizes (the nontrivial fact) that when $\lambda_{k-1}^*<1$, the largest CA-component necessarily comes from intersecting a component in $G_k$ (which is subcritical) with the largest component of $G^k$ (which is supercritical), the two being independent, and the fact that the vertex set of the giant component in the latter can be approximated by a binomial random subset of vertices. A similar  argument also leads to a proof of Proposition~\ref{prop:critical}. We refer to Sections~\ref{sec:stoc.dom},  \ref{sec.prel.intersection}, and~\ref{sec:proof.ii} for more details.

Concerning~\eqref{pt iii}, the crux of the proof is to show that when $\lambda_k^*<1$, a.a.s.\ every CA-component of size at least two is contained either in a cycle of $G$ or in a single edge (the latter happening only if the CA-component has exactly two vertices which are linked by an edge in at least two different colors), see Lemma~\ref{cor:support.cycle}. The proof of this result relies on a combination of some counting arguments (e.g. showing that a.a.s.\ connected subgraphs of $G(n,\lambda/n)$ of size at most some constant times $\log n$ contain at most one cycle, see Lemma~\ref{lem:2 cycles}), and a more probabilistic lemma showing that a.a.s.\ all CA-components have bounded size (see in particular  Lemma~\ref{lem:Ijconnect}). Once this is established,~\eqref{pt iii} follows from standard results on the asymptotic numbers of short cycles in an Erd\H{o}s-R\'enyi random graph.

\begin{remark}
Note that the notion of CA-component of a vertex $u$ is usually defined as the set of vertices connected to $u$ when deleting edges of color $i$ for any $i\in [k]$ (that is, unlike in our setting, edges in color $i$ cannot be used even if they have more than one color). While all our results would also hold with this definition, ours is slightly more convenient to deal with, especially in the intermediate regime, since this way $G_i$ and $G^i$ are independent.
\end{remark}

\paragraph{Further notation and terminology.} 
In general, we omit the dependence on $n$, $\Lambda$ and $(\lambda_i)_{i\in [k]}$ for convenience of notation.

For $I\subseteq [k]$ and a vertex $u\in [n]$, we denote by $\cC_I(u)$ and $\cC^I(u)$ the connected components of $u$ in $G_I$ and $G^I$, respectively. An edge is said to be \emph{repeated} if it participates in at least two of $G_1, \ldots, G_k$.

In this paper, we often identify graphs with their vertex sets. For instance, given a graph $H$, the \emph{size} of $H$ stands for the number of vertices in $H$, which we denote by $|H|$.
For a set $S$ of vertices of $H$, we denote by $H[S]$ the subgraph of $H$ induced by $S$ (that is, the graph with vertex set $S$ and edge set given by the edges of $H$ with both vertices in $S$). 
Moreover, for two vertices $u,v$ in $H$, we denote by $\{u\xleftrightarrow{H\,} v\}$ the event that $u$ and $v$ are connected in $H$.

A sequence of events $(\cE_n)_{n\ge 1}$ is said to hold a.a.s.\ if $\mathbb P(\cE_n)\to 1$ as $n\to \infty$. Given two positive real sequences $(f_n)_{n\ge 1}$ and $(g_n)_{n\ge 1}$, we write $f_n=o(g_n)$ if $f_n/g_n\to 0$ when $n\to\infty$, and $f_n=\mathcal O(g_n)$ if there exists a constant $C>0$ such that $f_n\le Cg_n$ for all $n\ge 1$. Furthermore, we write $\xrightarrow{d}$ to denote convergence in distribution of a sequence of random variables, and $\xrightarrow{\mathbb P}$ for convergence in probability.

Finally, we denote by $\Po(\lambda)$ the Poisson distribution with parameter $\lambda$, and by $\text{Bin}(n,q)$ the Binomial distribution with parameters $n$ and $q$. For a family of distributions $(\mu_i)_{i\in I}$, we denote by $\bigotimes_{i\in I} \mu_i$ the distribution of a vector $(X_i)_{i\in I}$ of independent random variables where $X_i\sim \mu_i$ for every $i\in I$.

\paragraph{Plan of the paper.} The rest of the paper is organized as follows. In the next section, we recall some known facts about ER random graphs, both in the subcritical and supercritical regimes, and show the result on the size of the largest black connected set in a (vertex-)colored subcritical ER random graph which was mentioned above. 
Then, in Section~\ref{sec:proof.theo} we give the proof of our main results, Theorem~\ref{thm:k>2} and Propositions~\ref{prop:Nell} and~\ref{prop:critical}, and finally discuss some open questions in Section~\ref{sec:discussion}. 

\section{\texorpdfstring{Preliminaries on Erd\H{o}s-R\'enyi random graphs}{}}
In this section, we gather some results on the random graph $G(n,\tfrac \lambda n)$. Apart from the results in Section~\ref{sec.prel.intersection}, most of the material presented here is well-known, and we sometimes include short proofs only for the reader's convenience. 

We let $\cC(u)$ denote the connected component of a vertex $u \in [n]$. 
Also, for $\lambda>0$, set 
\begin{equation}\label{def.Ilambda}
I_\lambda = \lambda- 1 - \log \lambda.
\end{equation} 
It is easy to check that this is a positive real number for all $\lambda>0$ different from 1. Also, for every integer $s\ge 0$, we define 
$$Z_s = \sum_{u\in [n]} \mathds{1}_{\{|\cC(u)|\,\ge\, s\}}. $$

\subsection{Subcritical regime: cluster size and two-point connectivity} 
Fix $\lambda\in (0,1)$. Then, it is well-known that
\begin{equation}\label{LLN.subcritical}
\frac{\max_{u\in [n]} |\cC(u)|}{\log n} \xrightarrow[n\to \infty]{\mathbb P} \frac 1{I_\lambda},
\end{equation}
see e.g. Theorems 4.4 and 4.5 in~\cite{Hof16}. 
It will also be useful to have a bound on the upper tail of the typical cluster size. The following one will be sufficient for our purposes.

\begin{lemma}[see display (4.3.11) in~\cite{Hof16}]\label{lem:sizeERC} Fix $\lambda\in (0,1)$. Then, for every $t\ge 1$, 
$$\mathbb P(|\cC(u)|\ge t) \le \e^{- I_\lambda\cdot\, t}.$$
\end{lemma}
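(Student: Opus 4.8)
The plan is to bound $|\cC(u)|$ by the total progeny of a subcritical branching process, rephrase the event $\{|\cC(u)|\ge t\}$ as a large-deviation event for a sum of i.i.d.\ Bernoulli variables, and conclude by the exponential Markov inequality, whose optimal exponential tilt reproduces exactly the rate $I_\lambda$ from~\eqref{def.Ilambda}. Concretely, I would explore $\cC(u)$ one vertex at a time: start with the single active vertex $u$, and at the $j$-th step (as long as there is still an active vertex) pick an active vertex $w_j$, mark it explored, and reveal the status of all edges from $w_j$ to vertices not previously explored. Because a vertex is explored only once, these edge-sets are pairwise disjoint over $j$, so the revealed edges are i.i.d.\ Bernoulli$(\lambda/n)$; writing $Y_j$ for the number of edges revealed at step $j$ (a $\text{Bin}(n-j,\lambda/n)$ variable, with the $Y_j$ independent), the number of newly activated vertices at step $j$ is at most $Y_j$. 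Letting $A_j$ denote the number of active vertices after step $j$, we have $A_0=1$ and, while the exploration runs, $A_j\le A_{j-1}-1+Y_j$, hence $A_j \le 1-j+\sum_{i=1}^j Y_i$. Since $|\cC(u)|$ equals the first time $A$ hits $0$, on the event $\{|\cC(u)|\ge t\}$ the exploration is still running at time $t-1$ and $A_{t-1}\ge 1$, which forces $\sum_{i=1}^{t-1}Y_i\ge t-1$. As $\sum_{i=1}^{t-1}Y_i$ is distributed as $\text{Bin}\big(\sum_{i=1}^{t-1}(n-i),\tfrac\lambda n\big)$, which is stochastically dominated by $\text{Bin}\big((t-1)n,\tfrac\lambda n\big)$, we obtain
$$\P{|\cC(u)|\ge t}\ \le\ \P{\,\text{Bin}\big((t-1)n,\tfrac{\lambda}{n}\big)\ \ge\ t-1\,}.$$
(Equivalently, this is the standard stochastic domination of $|\cC(u)|$ by the total progeny of a $\text{Bin}(n,\lambda/n)$-Galton-Watson tree.)

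It then remains to bound a binomial upper tail. For $\theta>0$, the exponential Markov inequality together with $1+x\le\e^x$ gives
$$\P{\,\text{Bin}\big((t-1)n,\tfrac{\lambda}{n}\big)\ge t-1\,}\ \le\ \e^{-\theta(t-1)}\Big(1+\tfrac{\lambda}{n}(\e^{\theta}-1)\Big)^{(t-1)n}\ \le\ \exp\!\Big((t-1)\big[\lambda(\e^{\theta}-1)-\theta\big]\Big).$$
Minimizing the bracket over $\theta>0$, the minimizer is $\theta=-\log\lambda$ (positive because $\lambda<1$), and there $\lambda(\e^{\theta}-1)-\theta=(1-\lambda)+\log\lambda=-(\lambda-1-\log\lambda)=-I_\lambda$. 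Hence $\P{|\cC(u)|\ge t}\le \e^{-(t-1)I_\lambda}$; running the same computation with $\sum_{i=1}^{t}Y_i$ (i.e.\ for the event $\{|\cC(u)|>t\}$) yields the exact form of display~(4.3.11) in~\cite{Hof16}. It is worth noting that $I_\lambda$ is precisely the Cram\'er rate function of a $\Po(\lambda)$ variable at the value $1$, which explains why it governs the probability that a sum of $t-1$ such increments reaches $t-1$.

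The only genuinely delicate point is the reduction in the first paragraph, namely the stochastic domination of the cluster by a branching-process total progeny: the offspring counts of the exploration are not independent, since each discovered vertex shrinks the pool of not-yet-seen vertices. This is handled either by the edge-revealing convention above --- revealing, at each step, the edges from the current vertex to all vertices not previously explored makes the examined edge-sets genuinely disjoint, hence their indicators independent --- or by the general fact that a sum of conditionally stochastically dominated increments is stochastically dominated by the corresponding sum of independent variables. Everything afterwards is the routine Chernoff estimate for a binomial, and the only point of interest there is that its optimal exponent coincides with the function $I_\lambda$.
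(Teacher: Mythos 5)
The paper does not actually prove this lemma --- it is imported verbatim from display (4.3.11) of van der Hofstad's book --- so the relevant comparison is with the standard textbook argument, and that is precisely the argument you give: dominate the exploration of $\cC(u)$ by a binomial branching process, then apply the exponential Markov inequality, whose optimal tilt $\theta=-\log\lambda$ produces the rate $I_\lambda$. Your bookkeeping is sound: the edge sets revealed at distinct steps are genuinely disjoint, so the $Y_j$ are independent $\mathrm{Bin}(n-j,\lambda/n)$ variables (with the usual implicit convention that the exploration is continued artificially so that $Y_1,\dots,Y_{t-1}$ are defined on the whole probability space, not only on $\{|\cC(u)|\ge t\}$), the passage from $\{|\cC(u)|\ge t\}$ to $\{\sum_{i=1}^{t-1}Y_i\ge t-1\}$ is correct, and the Chernoff computation is right.

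The one substantive issue is the exponent. Your argument yields $\P{|\cC(u)|\ge t}\le \e^{-(t-1)I_\lambda}$, not $\e^{-tI_\lambda}$, and the patch you suggest (running the computation with $\sum_{i=1}^{t}Y_i$) only bounds $\P{|\cC(u)|>t}$, which is a strictly weaker statement than the lemma as written. No argument can close this gap, because the stated inequality is false for small $t$: at $t=1$ the left-hand side equals $1$ while $\e^{-I_\lambda}<1$, and for $\lambda$ small it also fails at $t=2$, where the left-hand side is of order $\lambda$ while the right-hand side is of order $\lambda^2$. So the lemma should be read either as a bound on $\P{|\cC(u)|>t}$ or with the harmless prefactor $\e^{I_\lambda}$, and your proof establishes exactly that corrected version. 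The correction costs nothing downstream: Corollary~\ref{cor:connectionER} only needs the geometric series $\sum_t \P{|\cC(u)|\ge t}$ to be $\mathcal O(1)$ (its explicit constant changes to $(1-\e^{-I_\lambda})^{-1}$), and every use of the lemma at scale $t\asymp\log n$ absorbs the constant $\e^{I_\lambda}$ into the $(1+o(1))$ factors in the exponents.
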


A reverse inequality holds as well when $t$ does not grow too fast with $n$. In particular, we shall need the following result (see e.g. displays (4.3.34) and (4.3.37) in~\cite{Hof16}).
\begin{lemma}\label{lem:reverse.ERC}
Fix $\lambda\in(0,1)$ and $a\in (0,1/I_\lambda]$. Then,  
$$\mathbb P(|\cC(u)|\ge a\log n) \ge n^{-(1+o(1))I_\lambda\cdot\, a}.$$
\end{lemma}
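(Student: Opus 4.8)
The plan is to lower bound $\mathbb P(|\cC(u)|\ge a\log n)$ by the probability that $\cC(u)$ is a \emph{tree} on exactly $s:=\lceil a\log n\rceil$ vertices, to evaluate this probability exactly via Cayley's formula, and then to observe, using $s=\Theta(\log n)$, that all sub-exponential corrections are absorbed into the $n^{o(1)}$ in the exponent. (Alternatively one can simply invoke displays (4.3.34)--(4.3.37) in~\cite{Hof16}; the argument below is a self-contained version of that estimate.)

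Fix $s=\lceil a\log n\rceil$, so that $s=\Theta(\log n)$ and, since $a\le 1/I_\lambda$, $s\le \log n/I_\lambda+1$. As $\{|\cC(u)|\ge a\log n\}\supseteq\{|\cC(u)|=s\}$, it suffices to bound $\mathbb P(|\cC(u)|=s)$ from below. For each $S\subseteq[n]$ with $u\in S$, $|S|=s$, and each spanning tree $T$ of $S$, let $A_{S,T}$ be the event that all $s-1$ edges of $T$ are present, all remaining $\binom s2-(s-1)$ pairs inside $S$ are absent, and all $s(n-s)$ pairs between $S$ and $[n]\setminus S$ are absent. Then $A_{S,T}\subseteq\{\cC(u)=S\}\subseteq\{|\cC(u)|=s\}$, and the events $A_{S,T}$ are pairwise disjoint, since on $A_{S,T}$ both $S=\cC(u)$ and the tree $T$ (the unique set of edges present inside $S$) are determined. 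Writing $p=\lambda/n$ and summing over the $\binom{n-1}{s-1}$ choices of $S$ and the $s^{s-2}$ trees on each $S$,
\[
\mathbb P(|\cC(u)|=s)\;\ge\;\binom{n-1}{s-1}\,s^{s-2}\,p^{s-1}\,(1-p)^{\binom s2-(s-1)+s(n-s)}.
\]

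It remains to estimate the right-hand side term by term, using $s=\cO(\log n)$. One has $\binom{n-1}{s-1}p^{s-1}\ge \frac{(n-s+1)^{s-1}}{(s-1)!}(\lambda/n)^{s-1}\ge(1-o(1))\frac{\lambda^{s-1}}{(s-1)!}$; by Stirling $\frac{s^{s-2}}{(s-1)!}\ge(1-o(1))\frac{e^{s-2}}{s^{3/2}}$; moreover, using $\log(1-x)\ge -x-x^2$ for small $x$ together with $s^2/n\to0$ and $s/n\to0$, one gets $(1-p)^{s(n-s)}\ge(1-o(1))\e^{-\lambda s}$ and $(1-p)^{\binom s2-(s-1)}\ge \e^{-\cO((\log n)^2/n)}=1-o(1)$. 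Multiplying these and recalling from~\eqref{def.Ilambda} that $\lambda \e^{1-\lambda}=\e^{-I_\lambda}$, we obtain
\[
\mathbb P(|\cC(u)|=s)\;\ge\;\frac{1-o(1)}{\lambda \e^{2} s^{3/2}}\,\bigl(\lambda \e^{1-\lambda}\bigr)^{s}\;=\;\frac{1-o(1)}{\lambda \e^{2} s^{3/2}}\,\e^{-s I_\lambda}.
\]
Since $s\le a\log n+1$, we have $\e^{-sI_\lambda}\ge \e^{-I_\lambda}\,n^{-a I_\lambda}$, while the prefactor is $n^{-o(1)}$ because $s^{-3/2}=(\log n)^{-3/2}$. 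Hence $\mathbb P(|\cC(u)|\ge a\log n)\ge n^{-(1+o(1))I_\lambda\cdot a}$, as desired.

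The argument is entirely routine; the only point requiring a little care is that the error terms ($s^2/n$, $s/n$, and the $\log s$ from Stirling) all lie in the $n^{\pm o(1)}$ window and thus disappear into the $(1+o(1))$ in the exponent. This is precisely where the scale $s=\Theta(\log n)$ is used; the hypothesis $a\le 1/I_\lambda$ is needed only mildly (to keep $sI_\lambda\le \log n+I_\lambda$), and in fact the same computation yields the bound for every fixed $a>0$.
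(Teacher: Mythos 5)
Your proof is correct. The paper itself does not prove this lemma but simply cites displays (4.3.34) and (4.3.37) in~\cite{Hof16}; your tree-counting argument (lower bounding $\mathbb P(|\cC(u)|=s)$ by the contribution of configurations in which $\cC(u)$ is an isolated tree on $s$ vertices, counting via Cayley's formula, and checking that the Stirling and $(1-p)^{\cdot}$ corrections are $n^{\pm o(1)}$ at the scale $s=\Theta(\log n)$) is precisely the standard derivation of that cited estimate, so in substance you are reconstructing the reference rather than taking a different route. All the steps check out: the events $A_{S,T}$ are indeed pairwise disjoint, the identity $\lambda\e^{1-\lambda}=\e^{-I_\lambda}$ matches~\eqref{def.Ilambda}, and your closing observation that the bound in fact holds for every fixed $a>0$ (the hypothesis $a\le 1/I_\lambda$ being relevant only for matching the upper bound of Lemma~\ref{lem:sizeERC}) is also accurate.
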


Moreover, Lemma~\ref{lem:sizeERC} has the following important consequence. 

\begin{corollary} \label{cor:connectionER}
Fix $\lambda\in (0,1)$. Then, for every pair of distinct vertices $u,v$ in $G(n,\frac \lambda n)$, 
$$\mathbb P(v\in \cC(u)) \le \frac 1n \cdot \frac {\e^{-I_\lambda}}{1-\e^{-I_\lambda}}.$$
\end{corollary}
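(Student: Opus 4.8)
The plan is to reduce the two-point connectivity probability to the expected cluster size of a single vertex, and then feed in the tail bound of Lemma~\ref{lem:sizeERC}.

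First I would exploit the vertex-exchangeability of $G(n,\tfrac\lambda n)$: conditionally on $\{|\cC(u)|=t\}$, the set $\cC(u)$ is a uniformly random $t$-element subset of $[n]$ containing $u$, so for every $v\ne u$ one has $\mathbb P\big(v\in\cC(u)\mid |\cC(u)|=t\big)=\tfrac{t-1}{n-1}$. Since $v\in\cC(u)$ forces $|\cC(u)|\ge 2$, and since $\tfrac{t-1}{n-1}\le\tfrac{t}{n}$ for every integer $1\le t\le n$, this gives
$$\mathbb P\big(v\in\cC(u)\big)=\sum_{t\ge 2}\frac{t-1}{n-1}\,\mathbb P\big(|\cC(u)|=t\big)\ \le\ \frac1n\,\mathbb E\!\left[|\cC(u)|\,\mathds 1_{\{|\cC(u)|\ge 2\}}\right].$$

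Next I would estimate the expectation on the right. Splitting it as $\mathbb E\big[|\cC(u)|\mathds 1_{\{|\cC(u)|\ge 2\}}\big]=\big(\mathbb E|\cC(u)|-1\big)+\mathbb P\big(|\cC(u)|\ge 2\big)$, using $\mathbb E|\cC(u)|-1=\sum_{t\ge 2}\mathbb P(|\cC(u)|\ge t)$, and applying Lemma~\ref{lem:sizeERC} term by term (including at $t=2$), I obtain
$$\mathbb E\!\left[|\cC(u)|\,\mathds 1_{\{|\cC(u)|\ge 2\}}\right]\ \le\ \sum_{t\ge 2}\e^{-I_\lambda t}+\e^{-2I_\lambda}\ =\ \e^{-2I_\lambda}\left(\frac1{1-\e^{-I_\lambda}}+1\right).$$
It then remains to verify the elementary inequality $\e^{-2I_\lambda}\big(\tfrac1{1-\e^{-I_\lambda}}+1\big)\le\tfrac{\e^{-I_\lambda}}{1-\e^{-I_\lambda}}$, which, after multiplying by $1-\e^{-I_\lambda}>0$ and dividing by $\e^{-I_\lambda}>0$, is equivalent to $2\e^{-I_\lambda}-\e^{-2I_\lambda}\le 1$, i.e.\ to $(1-\e^{-I_\lambda})^2\ge 0$. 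Combining the last two displays yields the claim.

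There is no real obstacle here: all computations are routine. The only point requiring a little care is obtaining the constant with the prefactor $\tfrac1n$ rather than the $\tfrac1{n-1}$ coming from the naive identity $\mathbb P(v\in\cC(u))=\tfrac1{n-1}\big(\mathbb E|\cC(u)|-1\big)$ — this is precisely what the estimate $\tfrac{t-1}{n-1}\le\tfrac tn$ together with the extra term $\mathbb P(|\cC(u)|\ge 2)$ are there to absorb, at the cost of the harmless inequality $(1-\e^{-I_\lambda})^2\ge 0$. If one is content with a bound of the shape $\tfrac{C_\lambda}{n}$ for a slightly larger explicit constant, one may instead simply write $\mathbb P(v\in\cC(u))\le\tfrac1{n-1}\sum_{t\ge 2}\e^{-I_\lambda t}=\tfrac1{n-1}\cdot\tfrac{\e^{-2I_\lambda}}{1-\e^{-I_\lambda}}$ directly.
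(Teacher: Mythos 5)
Your proof is correct and follows essentially the same route as the paper: exchangeability gives $\mathbb P(v\in\cC(u)\mid |\cC(u)|=t)=\tfrac{t-1}{n-1}\le\tfrac tn$, and then Lemma~\ref{lem:sizeERC} turns the resulting expectation into a geometric series. The only difference is cosmetic: the paper bounds $\tfrac1n\,\mathbb E[|\cC(u)|]=\tfrac1n\sum_{t\ge1}\mathbb P(|\cC(u)|\ge t)$ directly, which lands on the stated constant without the extra restriction to $\{|\cC(u)|\ge 2\}$ and the final elementary inequality you needed to absorb it.
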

\begin{proof}
It suffices to observe that for every $t\ge 1$, conditionally on the event $\{|\cC(u)|=t\}$, the set of vertices different from $u$ and contained in $\cC(u)$ is uniformly distributed among all possible subsets of $[n]\setminus \{u\}$ of size $t-1$. In particular, for every $v\neq u$, 
\begin{equation}\label{prob.cond.connect}
\mathbb P(v\in \cC(u) \mid |\cC(u)|= t) = \frac{t-1}{n-1}\le \frac{t}{n}.
\end{equation}
By summing over all positive integers $t\in [n]$ and using Lemma~\ref{lem:sizeERC}, we get 
$$\mathbb P(v\in \cC(u)) \le \sum_{t = 1}^n \frac{t}{n}\cdot  \mathbb P(|\cC(u)|= t) = \frac 1n \sum_{t = 1}^n\mathbb P(|\cC(u)|\ge t) \le \frac 1n \cdot \frac {e^{-I_\lambda}}{1-e^{-I_\lambda}},$$
as desired.
\end{proof}

The next result provides concentration of the variables $Z_s$ when $s$ is of order $\log n$. The equality for the expectation is a direct consequence of Lemmas~\ref{lem:sizeERC} and~\ref{lem:reverse.ERC}, while the two inequalities for the variance follow from Proposition 4.7 in~\cite{Hof16} and Lemma~\ref{lem:sizeERC}, respectively.

\begin{lemma}\label{lem:clustersize}
Let $\lambda\in (0,1)$ and $a\in (0,1/I_\lambda]$. Then, 
$$\mathbb E[Z_{a\log n}] = n^{1-(1+o(1)) I_\lambda\cdot\, a} \quad \text{and}\quad \mathrm{Var}[Z_{a\log n}] \le n\cdot \mathbb E\big[|\cC(1)|\mathds{1}_{\{|\cC(1)|\,\ge\, a\log n\}}\big]\le n^{1-(1+o(1)) I_\lambda\cdot\, a}.$$
\end{lemma}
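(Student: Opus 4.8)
## Proof plan for Lemma~\ref{lem:clustersize}

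\textbf{Expectation.} The plan is to compute $\mathbb E[Z_{a\log n}]$ directly from linearity. Since $Z_{a\log n} = \sum_{u\in [n]}\mathds 1_{\{|\cC(u)|\ge a\log n\}}$ and all vertices play the same role, we have $\mathbb E[Z_{a\log n}] = n\cdot \mathbb P(|\cC(1)|\ge a\log n)$. Now apply the two-sided estimate from Lemmas~\ref{lem:sizeERC} and~\ref{lem:reverse.ERC}: the upper bound gives $\mathbb P(|\cC(1)|\ge a\log n)\le e^{-I_\lambda a\log n} = n^{-I_\lambda a}$, while the lower bound gives $\mathbb P(|\cC(1)|\ge a\log n)\ge n^{-(1+o(1))I_\lambda a}$ (note that the hypothesis $a\in(0,1/I_\lambda]$ is exactly what is needed to invoke Lemma~\ref{lem:reverse.ERC}). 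Multiplying by $n$ and absorbing the upper bound's constant into the $o(1)$ in the exponent yields $\mathbb E[Z_{a\log n}] = n^{1-(1+o(1))I_\lambda a}$.

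\textbf{Variance, first inequality.} The plan is to expand the variance over pairs and use the fact, recalled as Proposition 4.7 in~\cite{Hof16}, that in $G(n,\lambda/n)$ with $\lambda<1$ the events $\{|\cC(u)|\ge s\}$ exhibit a suitable positive-correlation/second-moment structure. Concretely, write
\[
\mathrm{Var}[Z_s] = \sum_{u,v} \big(\mathbb P(|\cC(u)|\ge s,\,|\cC(v)|\ge s) - \mathbb P(|\cC(u)|\ge s)\mathbb P(|\cC(v)|\ge s)\big).
\]
Split the double sum according to whether $v\in\cC(u)$ or not. For the ``$v\notin\cC(u)$'' contribution, conditioning on $\cC(u)$ leaves a subcritical ER graph on the remaining vertices, and a standard comparison (the content of Proposition 4.7 in~\cite{Hof16}) shows this part is non-positive or negligible. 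For the ``$v\in\cC(u)$'' contribution, one bounds it by $\sum_{u,v}\mathbb P(v\in\cC(u),\,|\cC(u)|\ge s) = n\cdot \mathbb E\big[(|\cC(1)|-1)\mathds 1_{\{|\cC(1)|\ge s\}}\big]\le n\cdot\mathbb E\big[|\cC(1)|\mathds 1_{\{|\cC(1)|\ge s\}}\big]$, using that conditionally on $|\cC(u)|=t$ there are $t-1$ choices of such $v$. This gives the first claimed bound with $s=a\log n$.

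\textbf{Variance, second inequality.} The plan is to bound $\mathbb E\big[|\cC(1)|\mathds 1_{\{|\cC(1)|\ge a\log n\}}\big]$ by the tail bound of Lemma~\ref{lem:sizeERC}. Writing the expectation as an Abel/layer-cake sum,
\[
\mathbb E\big[|\cC(1)|\mathds 1_{\{|\cC(1)|\ge a\log n\}}\big] \le \sum_{t\ge a\log n} t\cdot \mathbb P(|\cC(1)| = t) \le \sum_{t\ge a\log n} t\, e^{-I_\lambda(t-1)},
\]
and since $\sum_{t\ge T} t e^{-I_\lambda t}$ is of order $T e^{-I_\lambda T}$ for $T\to\infty$, one gets $\mathbb E\big[|\cC(1)|\mathds 1_{\{|\cC(1)|\ge a\log n\}}\big] = \mathcal O\big((a\log n)\, n^{-I_\lambda a}\big) = n^{-(1+o(1))I_\lambda a}$, the polylogarithmic factor being swallowed by the $o(1)$ in the exponent. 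Multiplying by $n$ closes the chain of inequalities.

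\textbf{Main obstacle.} The only genuinely non-routine point is the first variance inequality, i.e.\ justifying that the ``disjoint-clusters'' cross term does not contribute positively — this is precisely what one imports from Proposition 4.7 of~\cite{Hof16} (a BK-type / exploration argument), so in practice the step reduces to correctly citing and applying that proposition; everything else is bookkeeping with the tail estimates already established.
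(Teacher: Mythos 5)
Your proposal is correct and follows essentially the same route as the paper, which proves this lemma simply by citing Lemmas~\ref{lem:sizeERC} and~\ref{lem:reverse.ERC} for the expectation, Proposition~4.7 of~\cite{Hof16} for the first variance inequality, and Lemma~\ref{lem:sizeERC} again for the second; your write-up just fills in the same bookkeeping. The only cosmetic quibble is the unnecessary $e^{-I_\lambda(t-1)}$ (Lemma~\ref{lem:sizeERC} already gives $\mathbb P(|\cC(1)|=t)\le e^{-I_\lambda t}$), which changes nothing.
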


\subsection{On the number of cycles}

We start with a result showing that a.a.s.\ all connected subgraphs of $G(n, \tfrac{\lambda}{n})$ of size at most some constant (depending on $\lambda$) times $\log n$ contain at most one cycle, and when $\lambda<1$, all those with size at least $\varepsilon \log n$ are trees for any fixed $\varepsilon>0$.

\begin{lemma}\label{lem:2 cycles}
Fix $\lambda > 0$. 
\begin{enumerate}
\item There is a positive constant $c_1=c_1(\lambda)$ such that a.a.s.\ the following holds: for every set $ S\subseteq [n]$ with $|S| \le c_1 \log n$, whenever the subgraph of $G(n,\tfrac{\lambda}{n})$ induced by $S$ is connected, it contains at most one cycle. 
\item If $\lambda < 1$, then for every $\varepsilon>0$, a.a.s.\ all components of size larger than $\varepsilon \log n$ are trees.
\end{enumerate}
\end{lemma}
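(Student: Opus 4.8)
The plan is to estimate, for each relevant value of $s$, the expected number of "bad" subgraphs and apply a first-moment (union bound) argument. Recall that a connected graph on $s$ vertices with at least two independent cycles has at least $s+1$ edges. So for Part 1, I would bound the expected number of vertex sets $S$ with $|S|=s$ that induce a connected subgraph containing at least two cycles by
\[
\binom{n}{s}\binom{\binom{s}{2}}{s+1}\Big(\frac{\lambda}{n}\Big)^{s+1}.
\]
Indeed, there are $\binom ns$ choices for $S$, then $\binom{\binom s2}{s+1}$ ways to select $s+1$ of the potential edges inside $S$ that will be present, and each such configuration occurs with probability $(\lambda/n)^{s+1}$; every bad $S$ is counted at least once this way since it contains some $(s+1)$-edge connected subgraph on all of $S$. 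Using $\binom ns\le n^s/s!$ and $\binom{\binom s2}{s+1}\le (s^2/2)^{s+1}/(s+1)!\le (es^2/(2(s+1)))^{s+1}$, this is at most $\frac{1}{s!}\cdot\frac{(e\lambda s)^{s+1}}{2^{s+1}(s+1)^{s+1}}\cdot\frac{s^{2(s+1)}}{n}\cdot\ldots$; the key point is that after simplification, the $n$-dependence is $n^{-1}$ times a factor that is at most $(C\log n)^{O(s)}$ when $s\le c_1\log n$, and $1/s!\ge s^{-s}$ absorbs a polynomial-in-$s$ base. Thus summing over $2\le s\le c_1\log n$ one gets a bound of the form $n^{-1}\cdot e^{O((\log n)(\log\log n))}\cdot\text{(geometric-type factor in }c_1\log n)$; choosing $c_1=c_1(\lambda)$ small enough makes the dominant term $n^{-1+o(1)}\to 0$. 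I would present the calculation at the level of "there is $c_1$ small enough so that the $s$-term is at most $n^{-1}(C\log n)^{Cs}$ which sums to $o(1)$."

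For Part 2, since $\lambda<1$, I want to show a.a.s.\ no connected subgraph on $s\ge\varepsilon\log n$ vertices contains even a single cycle; this is stronger than "components of that size are trees" but implies it. A unicyclic connected graph on $s$ vertices has $s$ edges, so the expected number of such $S$ with $|S|=s$ is at most
\[
\binom ns\binom{\binom s2}{s}\Big(\frac\lambda n\Big)^s\le \frac{n^s}{s!}\cdot\frac{(es^2/2)^s}{s^s}\cdot\frac{\lambda^s}{n^s}=\frac{(e\lambda s/2)^s}{s!}\le \frac{(e\lambda s/2)^s}{(s/e)^s}=\Big(\frac{e^2\lambda}{2}\Big)^s\cdot\ldots
\]
— wait, this crude bound has base $e^2\lambda/2$ which may exceed $1$, so I need a sharper count. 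The right move is to count unicyclic graphs more carefully: a connected unicyclic graph on a labelled $s$-set is a tree plus one extra edge, and the number of labelled trees on $s$ vertices is $s^{s-2}$, while the extra edge has at most $\binom s2$ choices, so the number of labelled connected unicyclic graphs on $s$ vertices is at most $s^{s-2}\cdot s^2/2 = s^s/2$. Hence the expected count of bad $S$ with $|S|=s$ is at most $\binom ns\cdot\frac{s^s}{2}\cdot(\lambda/n)^s\le \frac{n^s}{s!}\cdot\frac{s^s}{2}\cdot\frac{\lambda^s}{n^s}\le\frac{s^s}{2\,s!}\lambda^s\le \frac{e^s\lambda^s}{2}=\frac12(e\lambda)^s$, using $s!\ge (s/e)^s$. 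This still has base $e\lambda$ which can exceed $1$, so I must also use the connectivity constraint more carefully — in fact for a connected subgraph I should count spanning connected subgraphs, and the standard bound is that the number of connected labelled graphs on $s$ vertices with exactly $s$ edges is $O(s^{s-1/2})$ but more importantly each *component* of size $s$ in $G(n,\lambda/n)$ arises with the cluster having a spanning tree, contributing the extra factor $e^{-I_\lambda s(1+o(1))}$ from Lemma~\ref{lem:sizeERC}. Cleanest: directly use $\mathbb P(|\cC(u)|\ge \varepsilon\log n\text{ and }\cC(u)\text{ not a tree})$ — bound it by the expected number of pairs (a spanning tree on $\cC(u)$, an extra edge within it), giving $\sum_{s\ge\varepsilon\log n}\binom{n-1}{s-1}s^{s-2}\cdot\frac{s^2}{2}\cdot(\lambda/n)^{s-1}\cdot(\lambda/n)\cdot$(prob.\ the remaining non-edges behave) which telescopes, after bounding $(\lambda/n)$ for the extra edge, to $\frac\lambda n\sum_{s\ge\varepsilon\log n}s^{O(1)}\lambda^{s-1}e^{-I_\lambda s(1+o(1))}\cdot e^{O(s^2/n)}=o(1)$ since $\lambda<1$ forces geometric decay; multiplying by $n$ vertices still gives $o(1)$ because of the leading $\lambda/n$.

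**Main obstacle.** The genuinely delicate point is Part 1 near the top of the range $s\asymp c_1\log n$: there the factor $\binom ns$ contributes $n^{s}$, and one must verify that the product of the entropy factor $\binom ns\binom{\binom s2}{s+1}$ with the probability $(\lambda/n)^{s+1}$ really does leave a net negative power of $n$ once $c_1$ is small. Tracking the constants so that the "$+1$" in the exponent $s+1$ wins against the $e^{O(s\log s)}=n^{O(c_1\log\log n)}$ slack requires choosing $c_1$ as a function of $\lambda$ (roughly $c_1 < 1/(2\log(1/\lambda))$-type for $\lambda<1$, and for $\lambda\ge 1$ some absolute small constant works after noting the relevant subgraphs are sparse). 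This is routine but is where all the care goes; everything else is a clean first-moment computation, and I would state both parts' bad-event counts, do the two displayed estimates above, and conclude by Markov's inequality and a union over $s$.
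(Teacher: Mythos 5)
Your overall strategy --- first-moment/union bounds over vertex sets, with a sharper structural count for the ``bad'' configurations --- is exactly the paper's approach, and both parts can be pushed through along the lines you describe. The paper's Part 1 differs from yours only in the count: instead of $\binom{\binom{s}{2}}{s+1}$ it counts a spanning tree via Cayley's formula ($m^{m-2}$ choices, $m-1$ edges) plus two extra edges ($\binom{m}{2}^2$ choices), so the expected number of bad sets of size $m$ is at most $m^{m-2}\binom{m}{2}^2\binom{n}{m}(\lambda/n)^{m+1}\le \frac1n m^2(\e\lambda)^{m+1}$, and summing over $m\le c\log n$ works for $c<1/(1+\max(0,\log\lambda))$. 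For Part 2 the paper computes the expected number of vertices in cyclic components of size $m\le 2I_\lambda^{-1}\log n$, inserting the isolation factor $(1-\lambda/n)^{m(n-m)}$ to convert the base $\e\lambda$ into $\e^{-I_\lambda m}$, finds this is $\mathcal O(1)$, and combines Markov with the a.a.s.\ bound on the maximum component size.

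Two points in your accounting are wrong as stated, even though the underlying plan is sound. First, in Part 1 you claim the net factor to be beaten is $e^{O(s\log s)}=n^{O(c_1\log\log n)}$ and that choosing $c_1$ small handles it. It does not: $n^{c_1 C\log\log n}$ exceeds any fixed power of $n$ for every constant $c_1>0$, so a genuine slack of that order against a single $n^{-1}$ would kill the argument. What actually happens is that the superexponential part of $\binom{\binom{s}{2}}{s+1}\le (s^2/2)^{s+1}/(s+1)!$ cancels exactly against the $1/s!$ from $\binom{n}{s}$, leaving $\binom{n}{s}\binom{\binom{s}{2}}{s+1}(\lambda/n)^{s+1}\le \frac{C s}{n}\,(\e^2\lambda^2/2)^{s}$ --- a constant base, which is what a small $c_1(\lambda)$ beats. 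You gesture at this cancellation but then retain the $e^{O(s\log s)}$ slack in your final tally; the write-up has to commit to the cancellation. (Using Cayley's formula as the paper does sidesteps the issue entirely, since $m^{m-2}$ already matches $1/m!$ up to $\e^{m}$.) Second, your final display for Part 2 carries both a $\lambda^{s-1}$ from the tree edges and a separate $\e^{-I_\lambda s(1+o(1))}$ ``from Lemma~\ref{lem:sizeERC}''; that double-counts the decay, since $\e^{-I_\lambda s}$ is precisely what comes out of $\binom{n-1}{s-1}s^{s-2}(\lambda/n)^{s-1}(1-\lambda/n)^{s(n-s)}$ --- the $\lambda^{s-1}$ is already inside it. The correct single chain is: this quantity times the extra-edge factor $\binom{s}{2}\lambda/n$ is $\mathcal O\bigl(\tfrac{s}{n}\e^{-I_\lambda s}\bigr)$, which summed over $s\ge \varepsilon\log n$ and multiplied by $n$ vertices is $\mathcal O(\log n\cdot n^{-\varepsilon I_\lambda})=o(1)$; you also need to cut the sum at $s=\mathcal O(\log n)$ (via the a.a.s.\ bound on the largest component) to keep the $e^{O(s^2/n)}$ correction harmless, exactly as the paper does. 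With these repairs your argument coincides with the paper's.
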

\begin{remark} In fact, concerning the second statement of this lemma, more is true: as shown in the proof below, the expected number of vertices whose connected component contains at least one cycle is bounded uniformly in $n$. 
\end{remark}
\begin{proof}[Proof of Lemma~\ref{lem:2 cycles}]
Concerning the first part of the lemma, note that any connected graph with at least two cycles contains as a subgraph a spanning tree together with two additional edges. Furthermore, by Cayley's formula the number of spanning trees of the complete graph with size $m$ is equal to $m^{m-2}$. Therefore, for every $c>0$, by using that $\tbinom{n}{m}\le \left(\tfrac{n\e}{m}\right)^m$, 
we deduce that the expected number of subgraphs of $G(n,\tfrac{\lambda}{n})$ which are connected, contain at least two cycles and at most $c\log n$ vertices, is bounded from above by, 
\begin{equation*}
\sum_{m=1}^{\lfloor c\log n\rfloor} m^{m-2}\cdot \binom{m}{2}^2\cdot \binom{n}{m}\cdot  \left(\frac{\lambda}{n}\right)^{m+1} \le \frac{1}{n}\sum_{m=1}^{\lfloor c\log n\rfloor} m^2 (\e\lambda)^{m+1},
\end{equation*}
which is $o(1)$ if one chooses  $c<\frac{1}{1+\max(0,\log \lambda)}$. The proof of the first part is completed by an application of Markov's inequality.

For the second part, we compute the expected number of vertices in components of size at most $2 I_{\lambda}^{-1} \log n$ containing a cycle. Taking also into account the fact that no vertex in a component of size $m$ is connected to any of the $n-m$ vertices outside the component, the previous computation implies that the above expectation is at most
\[\sum_{m=1}^{2 I_{\lambda}^{-1} \log n} m\cdot m^{m-2}\cdot \binom{m}{2}\cdot \binom{n}{m} \cdot  \left(\frac{\lambda}{n}\right)^m \left(1-\frac{\lambda}{n}\right)^{m(n-m)} \le (1+o(1))\sum_{m=1}^{2 I_{\lambda}^{-1} \log n} m\cdot \e^{-I_{\lambda} m} = \cO(1).\]
Thus, by Markov's inequality there are a.a.s.\ less than $\varepsilon \log n$ vertices in components of size at most $2 I_{\lambda}^{-1} \log n$ containing a cycle. However, by Lemma~\ref{lem:sizeERC} a.a.s.\ all components have size at most $2 I_{\lambda}^{-1} \log n$, which completes the proof.
\end{proof}

\begin{remark}\label{rem:2 cycles}
The following modification of Lemma~\ref{lem:2 cycles} holds with almost the same proof.
\begin{enumerate}
\item There exists $c_1=c_1(\Lambda)$ such that a.a.s.\ for every set $S\subseteq [n]$ with $|S| \le c_1 \log n$, whenever the subgraph of $G$ induced by $S$ is connected, it either contains at most one cycle and no repeated edges or at most one repeated edge and no cycles. 
\item If $\lambda_i^*<1$ for some $i$, then for every $\varepsilon>0$, a.a.s.\ all connected components of $G^i$ of size at least $\varepsilon \log n$ are trees with no repeated edges.
\end{enumerate}
\end{remark}

The next lemma is a well-known result concerning the number of cycles of given size that will be needed for the proof of Theorem~\ref{thm:k>2}~\eqref{pt iii}. We refer e.g. to Corollary 4.9 from~\cite{Bol01} for a proof. 

\begin{lemma}[\cite{Bol01}, Corollary 4.9]\label{lem:Bol}
Fix $\lambda > 0$. For $m\ge 3$, denote by $C_m$ the number of cycles of length $m$ in the graph $G(n,p)$ with $p =  (1+o(1))\tfrac{\lambda}{n}$. Then, for any fixed $\ell \ge 3$, 
\[(C_3, \ldots, C_{\ell})\xrightarrow[n\to \infty]{d} \bigotimes_{m=3}^\ell \Po(\gamma_m),\]
where for all $m\in \{3,\dots,\ell\}$, $\gamma_m = \tfrac{\lambda^m}{2m}$. 
\end{lemma}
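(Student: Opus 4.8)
The plan is to prove this by the method of factorial moments, the standard route to Poisson limits for subgraph counts in $G(n,p)$ (see, e.g., \cite{Bol01}). Writing $(x)_a := x(x-1)\cdots(x-a+1)$ for the falling factorial, the relevant criterion states that a sequence of $\mathbb{Z}_{\ge 0}^d$-valued random vectors converges in distribution to $\bigotimes_{i=1}^d \Po(\mu_i)$ as soon as, for every $(a_1,\dots,a_d)\in\mathbb{Z}_{\ge0}^d$, one has $\mathbb{E}\big[\prod_i (X_i)_{a_i}\big]\to\prod_i \mu_i^{a_i}$. Hence it suffices to show that, for all fixed non-negative integers $a_3,\dots,a_\ell$,
\[\mathbb{E}\Big[\,\prod_{m=3}^{\ell}(C_m)_{a_m}\Big]\ \xrightarrow[n\to\infty]{}\ \prod_{m=3}^{\ell}\gamma_m^{a_m},\qquad \gamma_m=\frac{\lambda^m}{2m}.\]

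First I would record the first-moment asymptotics: on a fixed set of $m$ labelled vertices there are $(m-1)!/2$ distinct $m$-cycles, so $\mathbb{E}[C_m]=\binom{n}{m}\frac{(m-1)!}{2}p^m$, which tends to $\gamma_m$ since $m$ is a fixed constant and $p=(1+o(1))\lambda/n$. For the mixed factorial moment, I would expand $\mathbb{E}\big[\prod_m(C_m)_{a_m}\big]$ as a sum over ordered tuples that, for each $m$, list $a_m$ pairwise distinct $m$-cycles; the term attached to such a tuple equals $p^{|E(\mathcal{F})|}$, where $\mathcal{F}$ denotes the union of all the chosen cycles, a graph on at most $N:=\sum_m m\,a_m$ vertices. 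I would split the sum according to whether the chosen cycles are pairwise vertex-disjoint or not. The disjoint tuples give the main term: there $|E(\mathcal{F})|=\sum_m m\,a_m$, and the number of such tuples is $(1+o(1))\prod_m\big(\binom{n}{m}\tfrac{(m-1)!}{2}\big)^{a_m}$, since the disjointness constraints concern only a bounded number of vertices and hence cost merely a factor $1+o(1)$; multiplying by $p^{\sum_m m a_m}$ and arguing exactly as in the first-moment computation yields the limit $\prod_m\gamma_m^{a_m}$.

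The overlapping tuples form the error term, and controlling it is the main (though routine) point. The key combinatorial fact is that a connected graph which is the union of at least two distinct cycles has strictly more edges than vertices — adding a further cycle to a connected graph that already contains one creates an extra independent cycle. Thus, in any overlapping tuple, some connected component of $\mathcal{F}$ has $|E|\ge|V|+1$, so overall $|E(\mathcal{F})|\ge|V(\mathcal{F})|+1$. Since $|V(\mathcal{F})|\le N$ is bounded, there are at most $\binom{n}{|V(\mathcal{F})|}\cdot O(1)=O\big(n^{|V(\mathcal{F})|}\big)$ labelled placements of any fixed such $\mathcal{F}$, and summing $p^{|E(\mathcal{F})|}=O\big(n^{-|V(\mathcal{F})|-1}\big)$ over them bounds the total error contribution by $O(1/n)=o(1)$. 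Feeding the main term and the vanishing error term into the factorial-moment criterion gives the claimed joint convergence, and since the limiting means are $\gamma_m=\lambda^m/(2m)$ this is exactly the statement of the lemma. (The factor $(1+o(1))$ in $p$ is harmless throughout, as it is always raised to a power bounded by $N$.)
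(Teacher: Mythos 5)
Your proof is correct: the factorial-moment computation, the split into vertex-disjoint and overlapping tuples, and the cycle-space observation that an overlapping component forces $|E(\mathcal{F})|\ge |V(\mathcal{F})|+1$ (so the error term is $O(1/n)$) are all sound. The paper does not prove this lemma but cites it from Bollob\'as (Corollary 4.9), and your argument is essentially the standard method-of-moments proof given there, so there is nothing further to compare.
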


Lemma~\ref{lem:Bol} has the following direct consequence. Recall that an edge is said to be repeated in $G$ if it is part of at least two of the graphs $G_1,\dots,G_k$. 

\begin{corollary}\label{cor:Bol}
Denote by $C_2$ the number of repeated edges in $G$. Then, with the same notation as in  Lemma~\ref{lem:Bol}, for every $\ell\ge 2$,
\[(C_2, \ldots, C_{\ell})\xrightarrow[n\to \infty]{d} \bigotimes_{m=2}^\ell \Po(\gamma_m),\]
where $\gamma_2 = \tfrac{1}{2} \sum_{i,j\in [k], i<j} \lambda_i \lambda_j$ and for all $m\in \{3,\dots,\ell\}$, $\gamma_m = \tfrac{\Lambda^m}{2m}$.
\end{corollary}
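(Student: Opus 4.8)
The plan is to identify both $G$ and its set of repeated edges as Erd\H{o}s-R\'enyi random graphs, apply Lemma~\ref{lem:Bol} to the former, and glue the two pieces together by the method of moments. First I would record two structural observations. For a pair of vertices $\{u,v\}$, both the event $\{u,v\}\in E(G)$ and the event that $\{u,v\}$ is repeated depend only on the set of colours $i\in[k]$ with $\{u,v\}\in E(G_i)$, and these (random) colour sets are mutually independent over distinct pairs. Hence $G$ is an Erd\H{o}s-R\'enyi random graph $G(n,p_n)$ with $p_n=1-\prod_{i=1}^k(1-\tfrac{\lambda_i}{n})=(1+o(1))\tfrac{\Lambda}{n}$, so Lemma~\ref{lem:Bol} applies with $\lambda=\Lambda$ and gives $(C_3,\dots,C_\ell)\xrightarrow[n\to\infty]{d}\bigotimes_{m=3}^\ell\Po\big(\tfrac{\Lambda^m}{2m}\big)$. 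Likewise, the set $R$ of repeated edges is the edge set of an Erd\H{o}s-R\'enyi random graph $G(n,q_n)$, where
\[q_n=1-\prod_{i=1}^k\Big(1-\tfrac{\lambda_i}{n}\Big)-\sum_{i=1}^k\tfrac{\lambda_i}{n}\prod_{j\neq i}\Big(1-\tfrac{\lambda_j}{n}\Big)=\frac{1}{n^2}\sum_{i<j}\lambda_i\lambda_j+\mathcal{O}(n^{-3}).\]
In particular $C_2=|R|\sim\mathrm{Bin}\big(\binom n2,q_n\big)$, which already yields $C_2\xrightarrow[n\to\infty]{d}\Po(\gamma_2)$ with $\gamma_2=\tfrac12\sum_{i<j}\lambda_i\lambda_j$ and identifies the constant $\gamma_2$.

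To upgrade these two marginal statements to the joint one I would use the method of moments. Since $\bigotimes_{m=2}^\ell\Po(\gamma_m)$ is determined by its joint factorial moments, which equal $\prod_{m=2}^\ell\gamma_m^{r_m}$, it suffices to prove that $\E{\prod_{m=2}^\ell(C_m)_{r_m}}\to\prod_{m=2}^\ell\gamma_m^{r_m}$ for all non-negative integers $r_2,\dots,r_\ell$, where $(x)_r=x(x-1)\cdots(x-r+1)$. Expanding the falling factorials, this expectation equals the sum, over ordered tuples consisting of $r_2$ distinct repeated edges together with, for each $3\le m\le\ell$, $r_m$ distinct $m$-cycles of $G$, of the probability that all these structures occur simultaneously (with the prescribed colour constraints). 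I would split the sum according to whether the structures of a given tuple are pairwise vertex-disjoint. The vertex-disjoint tuples dominate: disjointness makes the underlying pairs of vertices distinct, so the corresponding events are independent and the summand factorises into one factor per structure; summing over placements, each repeated-edge factor contributes $\gamma_2$ in the limit via the computation of $q_n$ above, and each $m$-cycle factor contributes $\gamma_m=\tfrac{\Lambda^m}{2m}$ via the count $\sim\tfrac{n^m}{2m}$ of $m$-cycles of $K_n$ and the appearance probability $(1+o(1))(\Lambda/n)^m$ --- this is precisely the moment computation underlying Lemma~\ref{lem:Bol}, now with one extra slot accounting for repeated edges. The tuples that are not pairwise vertex-disjoint contribute $\mathcal{O}(1/n)$: any coincidence of vertices between two of the structures strictly lowers the number of distinct vertices used without lowering the number of edges that must be present, which costs a factor $\Theta(n)$ in the number of placements that is not recovered from the appearance probability. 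Combining the two estimates gives the convergence of the factorial moments, hence the corollary.

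The step I expect to require the most care --- though it is purely technical --- is the bound on the non-disjoint tuples, where one goes through the ways a repeated edge and an $m$-cycle, or two cycles, can share one or two vertices and checks in each case that the resulting configuration is sub-leading. An alternative that avoids re-running the moment computation behind Lemma~\ref{lem:Bol} is to argue by asymptotic independence: the expected number of cycles of length at most $\ell$ in $G$ that contain a repeated edge is $\mathcal{O}(1/n)$ (a fixed $m$-cycle with a prescribed edge declared repeated has probability $\mathcal{O}\big(q_n(\Lambda/n)^{m-1}\big)$, and there are $\mathcal{O}(n^m)$ $m$-cycles), so a.a.s.\ $(C_3,\dots,C_\ell)$ is determined by the non-repeated edges of $G$ alone; since a.a.s.\ $|R|=\mathcal{O}(1)$, conditioning on $R$ affects only $\mathcal{O}(1)$ pairs and changes the relevant edge probabilities by a factor $1+o(1)$, hence leaves the limiting law of $(C_3,\dots,C_\ell)$ unchanged, so that $C_2$ (a function of $R$) and $(C_3,\dots,C_\ell)$ become asymptotically independent and the claim follows from the first two observations.
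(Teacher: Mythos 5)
Your proposal is correct, but your primary route differs from the paper's. The paper treats Lemma~\ref{lem:Bol} as a black box: it first notes (exactly as you do) that $G$ is an ER graph with parameter $(1+o(1))\Lambda/n$ and that $C_2$ is binomial with mean tending to $\gamma_2$, and then obtains asymptotic independence by a first-moment argument showing that a.a.s.\ no vertex lies simultaneously in a repeated edge and in a cycle of length at most $\ell$; hence $(C_3,\dots,C_\ell)$ a.a.s.\ equals the cycle counts of the graph obtained by deleting the $\le 2C_2$ vertices of repeated edges, which conditionally on $C_2$ is again an ER graph on $n-o(n)$ vertices, so Lemma~\ref{lem:Bol} applies conditionally and the joint limit follows. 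This is essentially the ``alternative'' you sketch in your last paragraph (the paper decouples at the level of vertices rather than conditioning on the set $R$ of repeated pairs, which makes the conditional graph exactly an ER graph rather than one with $\mathcal{O}(1)$ perturbed pairs). Your main route instead re-runs the factorial-moment computation underlying Lemma~\ref{lem:Bol} with an extra coordinate for repeated edges; this is self-contained and equally valid, at the cost of redoing the case analysis for overlapping structures that Bollob\'as's theorem already encapsulates. One small imprecision there: when two cycles of the tuple overlap, the number of required edges \emph{can} decrease (they may share edges), so the correct statement is that the union $H$ of two distinct cycles sharing at least one vertex satisfies $e(H)>v(H)$, which is what makes the contribution $\mathcal{O}(1/n)$; your phrasing ``without lowering the number of edges'' is literally false but the conclusion you draw from it is the right one, and the needed excess-edge fact is classical. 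No genuine gap.
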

\begin{proof}
Using that $G$ is distributed as an Erd\H{o}s-R\'enyi random graph with parameters $n$ and 
\[p = 1-\prod_{i=1}^k \left(1-\frac{\lambda_i}{n}\right) =(1+o(1))\cdot \frac{\Lambda}{n},\]
the joint convergence of $(C_3, \ldots, C_\ell)$ is given by Lemma~\ref{lem:Bol}. 

On the other hand, by definition $C_2\sim\text{Bin}(\tfrac{n(n-1)}{2},\tfrac{2(1+o(1))\gamma_2}{n^2})$, and thus $C_2$ converges in distribution to $\text{Po}(\gamma_2)$. It only remains to justify the asymptotic independence between $C_2$ and the other variables. The argument is the same as the one showing asymptotic independence of $C_3,\dots,C_\ell$, see Theorem~4.8 and Corollary~4.9 in~\cite{Bol01}. Briefly, one can first notice by a simple first moment argument that a.a.s.\ no vertex participates simultaneously in a repeated edge and in a cycle of length at most $\ell$. Thus, the variables $(C_3,\dots,C_\ell)$ a.a.s.\ coincide with the cycle counts in the graph, obtained by deleting all vertices in repeated edges, which conditionally on $C_2$ is an ER random graph with at least $n-2C_2$ vertices. Using that $\mathbb E[C_2]=\mathcal O(1)$, and more precisely that a.a.s.\ $n-2C_2 = n -o(n)$, implies the corollary.
\end{proof}

\subsection{Supercritical regime: stochastic domination of the giant component}\label{sec:stoc.dom}
Recall that when $\lambda>1$, the graph $G(n,\tfrac \lambda n)$ has a.a.s.\ a unique connected component of linear size (called the \emph{giant} component). More precisely, it is well-known that 
\begin{equation}\label{LLN.giant}
\frac{\max_{u\in [n]} |\cC(u)|}{n} \xrightarrow[n\to\infty]{\mathbb P} \mu_\lambda
\end{equation}
where $\mu_\lambda$ is the survival probability of a Bienaym\'e-Galton-Watson tree with offspring distribution $\mathrm{Po}(\lambda)$ characterized as the unique positive solution of the equation $1=e^{-\lambda t} + t$, see display (3.6.2) and Theorem 4.8 in~\cite{Hof16}. Recall also that conditionally on its size, the set of vertices in the giant component is uniformly distributed among the family of subsets of $[n]$ of that size. As a consequence, one has the following stochastic comparison with binomial random subsets of vertices. 

\begin{lemma}\label{lem:stoch.giant}
Fix $\lambda>1$ and $\varepsilon>0$. Let $\cC_{\max}$ be the a.a.s.\ unique largest connected component of $G(n,\tfrac \lambda n)$. Let also $(X_v)_{v\in [n]}$ and $(Y_v)_{v\in [n]}$ be two sequences  
of i.i.d.\ Bernoulli random variables with respective parameters $\max(\mu_\lambda-\varepsilon,0)$ and $\min(\mu_\lambda+\varepsilon,1)$. Then, there is a coupling of these two sequences with $G(n,\tfrac{\lambda}n)$ such that a.a.s.\ one has
\begin{equation}\label{eq:inclusion.giant}
\{v : X_v=1\} \subseteq \cC_{\max} \subseteq \{v : Y_v=1\}.
\end{equation} 
\end{lemma}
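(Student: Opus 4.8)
The plan is to exploit the stated fact that, conditionally on $|\cC_{\max}| = m$, the vertex set of the giant component is a uniformly random $m$-subset of $[n]$, together with the concentration of $|\cC_{\max}|$ around $\mu_\lambda n$ coming from~\eqref{LLN.giant}. First I would fix $\delta>0$ small (to be chosen as a function of $\eps$, say $\delta = \eps/3$) and work on the a.a.s.\ event $\mathcal{G}_n = \{\,(\mu_\lambda - \delta)n \le |\cC_{\max}| \le (\mu_\lambda+\delta)n\,\}$, which has probability tending to $1$ by~\eqref{LLN.giant}; off this event the coupling can be taken arbitrarily, since we only need the inclusion~\eqref{eq:inclusion.giant} to hold a.a.s. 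So the real content is: conditionally on $\mathcal{G}_n$ (indeed conditionally on the exact value $|\cC_{\max}|=m$ with $m$ in the relevant window), build the coupling.

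The key observation is that a uniformly random $m$-subset of $[n]$ has the same law as $\{v : B_v = 1\}$ conditioned on $\sum_v B_v = m$, where $(B_v)_{v\in[n]}$ are i.i.d.\ Bernoulli$(q)$ for any $q\in(0,1)$; in particular one can realize $\cC_{\max}$ itself this way. Concretely, I would proceed as follows. Let $q^- = \max(\mu_\lambda - \eps, 0)$ and $q^+ = \min(\mu_\lambda+\eps,1)$, and assume $\eps$ is small enough that $q^- < \mu_\lambda - \delta$ and $q^+ > \mu_\lambda + \delta$ (this is why we took $\delta = \eps/3$). Given $m$ in the window $[(\mu_\lambda-\delta)n, (\mu_\lambda+\delta)n]$, I want a monotone coupling of three $\{0,1\}$-sequences: $(X_v)$ with density $q^-$, the indicator sequence $(\mathbf{1}_{v\in\cC_{\max}})$ which has exactly $m$ ones in a uniformly random position, and $(Y_v)$ with density $q^+$, such that a.a.s.\ $X_v \le \mathbf{1}_{v\in\cC_{\max}} \le Y_v$ coordinatewise. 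For the left inclusion: sample $(X_v)$ i.i.d.\ Bernoulli$(q^-)$; let $A = \sum_v X_v$, which by a Chernoff bound is a.a.s.\ at most $(q^- + \delta/2)n < (\mu_\lambda - \delta/2)n \le m$ (shrinking $\eps,\delta$ if necessary so $q^-+\delta/2 < \mu_\lambda - \delta$); then choose $\cC_{\max}$ to be a uniformly random $m$-subset containing $\{v : X_v = 1\}$ — this is legitimate because, conditionally on $A \le m$, a uniform $m$-subset conditioned to contain a fixed $A$-set is still, after integrating over the uniform choice of the $X$-support, a uniform $m$-subset, so the marginal law of $\cC_{\max}$ is correct. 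Symmetrically, for the right inclusion, first expose $\cC_{\max}$ (a uniform $m$-set) and then build $(Y_v)$ by setting $Y_v = 1$ for all $v\in\cC_{\max}$ and, for $v\notin\cC_{\max}$, letting $Y_v$ be Bernoulli with the conditional probability that makes the overall law i.i.d.\ Bernoulli$(q^+)$ given that there are already $m$ forced ones — again this works on the a.a.s.\ event that a Bernoulli$(q^+)$ sample has at least $m$ ones, which holds since $q^+ > \mu_\lambda + \delta \ge m/n$.

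The one genuine subtlety is consistency: I need a single coupling carrying $(X_v)$, $G(n,\lambda/n)$ (hence $\cC_{\max}$), and $(Y_v)$ simultaneously, not two separate ones. The clean way is to build the joint law conditionally on $|\cC_{\max}|=m$ as follows: first sample $(X_v)$ i.i.d.\ Bernoulli$(q^-)$; on the good event $\{A\le m\}$, sample $\cC_{\max}$ as a uniform $m$-superset of $\{X_v=1\}$ and then sample the rest of the graph $G(n,\lambda/n)$ conditionally on its giant component having vertex set $\cC_{\max}$ (this conditional law is well-defined and, by the stated uniformity property, reproducing $G(n,\lambda/n)$ correctly); finally sample $(Y_v)$ conditionally on $\cC_{\max}$ as described, on the good event that it can be completed to an i.i.d.\ Bernoulli$(q^+)$ vector. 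Unconditioning on $m$ and handling $m$ outside the window by an arbitrary independent coupling, the total-variation defect is at most $\Pr(\mathcal{G}_n^c) + \Pr(A > m) + \Pr(\text{Bin}(n,q^+) < m) = o(1)$, so~\eqref{eq:inclusion.giant} holds a.a.s. I expect the bookkeeping of this three-way monotone coupling to be the main obstacle — not because any step is deep, but because one must be careful that each conditioning step preserves the correct marginal; an alternative that sidesteps it is to invoke a standard monotone-coupling lemma (e.g.\ via Strassen's theorem) separately for the pair $(X,\cC_{\max})$ and the pair $(\cC_{\max},Y)$ and then glue along the common coordinate $\cC_{\max}$ using the fact that, conditionally on $\cC_{\max}$, the variables $(X_v)$ and $(Y_v)$ can be taken independent.
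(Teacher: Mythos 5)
Your proposal is correct and takes essentially the same route as the paper: the paper's (much shorter) proof simply observes that all three sets in~\eqref{eq:inclusion.giant} are uniformly distributed given their sizes and that, by the weak law of large numbers and~\eqref{LLN.giant}, a.a.s.\ $\sum_v X_v\le |\cC_{\max}|\le \sum_v Y_v$, leaving implicit exactly the nested-uniform-subsets coupling that you construct explicitly. Your careful treatment of the gluing along the common coordinate $\cC_{\max}$ is a valid way to fill in that implicit step.
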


\begin{proof} 
Note that conditionally on their respective sizes, the three sets appearing in~\eqref{eq:inclusion.giant} are sampled uniformly at random among all subsets of $[n]$ of that size. The lemma then follows by the weak law of large numbers and~\eqref{LLN.giant}, which together imply that a.a.s.\
$$\sum_{v\in [n]} X_v\le |\cC_{\max}|\le \sum_{v\in [n]} Y_v.$$
\end{proof}

\subsection{On the intersection of a giant with independent subcritical clusters}\label{sec.prel.intersection}
Fix $q\in (0,1)$ and $\lambda \in (0,1)$. For every $x\in [q,1)$, define
\[J_q(x)= x\log \frac xq + (1-x)\log \frac{1-x}{1-q},\quad \text{and set}\quad \rho(q,\lambda) = \inf_{x\in [q,1)}  \frac {I_\lambda+J_q(x)}{x}.\]
Consider a sequence $(X_v)_{v \in [n]}$ of independent Bernoulli random variables with parameter $q$, and independently a graph $G(n,\tfrac {\lambda} n)$. 
For $u\in [n]$ and $t\ge 0$, define 
$$\widetilde Z_t = \sum_{u\in [n]} \mathds{1}_{\{\sum_{v\in \cC(u)} X_v\, \ge\, t\}}. $$
The next lemma is similar in essence to Lemma~\ref{lem:clustersize}.
\begin{lemma}\label{lem:inter.giant.sub}
Fix $a\in (0,\tfrac{1}{\rho(q,\lambda)}]$. Then, 
$$\mathbb E[\widetilde Z_{a\log n}] = n^{1-(1+o(1)) \rho(q,\lambda)\,\cdot\, a } \quad \text{and}\quad \mathrm{Var}[\widetilde Z_{a\log n}] \le n^{1-(1+o(1)) \rho(q,\lambda)\,\cdot\, a}.$$ 
\end{lemma}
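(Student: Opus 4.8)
The plan is to mimic the proof of Lemma~\ref{lem:clustersize}, replacing the role of $|\cC(u)|$ by the $X$-weighted size $W(u):=\sum_{v\in \cC(u)}X_v$, and to derive the claim from sharp upper and lower bounds on $\Pr(W(u)\ge a\log n)$ together with a second-moment bound.

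\textbf{Step 1: the tail of $W(u)$.} Write $|\cC(u)|=m$ and condition on this event; then $W(u)$ is a sum of $m$ i.i.d.\ Bernoulli$(q)$ variables (up to the inclusion of $u$ itself, an $O(1)$ correction which is negligible at the scale $a\log n$). A standard Chernoff/Cram\'er bound gives $\Pr(\mathrm{Bin}(m,q)\ge t)\le \exp(-m\, J_q(t/m))$ for $t/m\in[q,1)$, and a matching lower bound $\Pr(\mathrm{Bin}(m,q)= \lceil t\rceil)= \exp(-(1+o(1))m J_q(t/m))$ when $m,t\to\infty$ with $t/m$ bounded away from $q$ and $1$. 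Summing over $m$ and using Lemma~\ref{lem:sizeERC} for the upper bound and Lemma~\ref{lem:reverse.ERC} for the lower bound, one gets, for $t=a\log n$,
\[
\Pr(W(u)\ge t)=\exp\Big(-(1+o(1))\,\min_{m\ge t}\big[I_\lambda m + m\,J_q(t/m)\big]\Big)
=\exp\Big(-(1+o(1))\, t\,\inf_{x\ge q}\frac{I_\lambda + J_q(x)}{x}\Big),
\]
after substituting $x=t/m$, i.e.\ $\Pr(W(u)\ge a\log n)=n^{-(1+o(1))\rho(q,\lambda)\,a}$. One should check that the infimum defining $\rho(q,\lambda)$ is attained at some $x^*\in(q,1)$ with $x^* \le t/(\text{something of order }\log n)$ so that both $m$ and $t$ indeed tend to infinity; the constraint $a\le 1/\rho(q,\lambda)$ guarantees $m=t/x^*$ is at most $(1/I_\lambda)\log n$-ish, so Lemma~\ref{lem:reverse.ERC} applies (its hypothesis is $a\in(0,1/I_\lambda]$, and here the relevant component size is $\approx (a/x^*)\log n$ with $a/x^* \le 1/I_\lambda$ by the choice of $x^*$ in the infimum). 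Summing the bound over $u\in[n]$ gives $\mathbb{E}[\widetilde Z_{a\log n}]=n^{1-(1+o(1))\rho(q,\lambda)\,a}$, which is the first claim.

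\textbf{Step 2: the variance.} Write $\widetilde Z_{a\log n}=\sum_u \mathds{1}_{A_u}$ with $A_u=\{W(u)\ge a\log n\}$. For the covariance of $\mathds{1}_{A_u}$ and $\mathds{1}_{A_v}$ with $u\ne v$, split according to whether $v\in\cC(u)$ or not. If $v\in\cC(u)$ then $A_u=A_v$, and the contribution is controlled by $\sum_{u}\mathbb E[\,|\cC(u)|\,\mathds{1}_{A_u}]\le n\,\mathbb E[\,|\cC(1)|\,\mathds{1}_{\{W(1)\ge a\log n\}}]$, which is of the same order $n^{1-(1+o(1))\rho(q,\lambda)\,a}$ since the extra polylog factor $|\cC(1)|$ is absorbed into the $(1+o(1))$ in the exponent (here one uses again Lemma~\ref{lem:sizeERC} to bound $|\cC(1)|$). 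If $v\notin\cC(u)$, then conditionally on $\cC(u)$ (as a vertex set) and on $(X_w)_{w\in\cC(u)}$, the component $\cC(v)$ is an ER cluster in the remaining $\ge n-|\cC(u)|$ vertices with the fresh Bernoulli marks, so $\Pr(A_v\mid \cC(u),\ (X_w)_{w\in\cC(u)})\le \Pr(A_1)$ computed on $n$ vertices up to a $(1+o(1))$ loss in the exponent — this is exactly the standard argument behind the variance estimate in Proposition~4.7 of~\cite{Hof16}. Hence $\mathrm{Cov}(\mathds{1}_{A_u},\mathds{1}_{A_v})$ over disjoint components is $o(1)\cdot \mathbb E[\mathds 1_{A_u}]\mathbb E[\mathds 1_{A_v}]$ summed, i.e.\ lower order, and we obtain $\mathrm{Var}[\widetilde Z_{a\log n}]\le n\,\mathbb E[\,|\cC(1)|\,\mathds{1}_{\{W(1)\ge a\log n\}}]\le n^{1-(1+o(1))\rho(q,\lambda)\,a}$, as desired.

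\textbf{Main obstacle.} The delicate point is Step~1: justifying that the discrete optimization $\min_{m}[I_\lambda m + m J_q(t/m)]$ is genuinely governed by the continuous rate $\rho(q,\lambda)$ and that the optimizing $m$ is large and of the right order (so that both the Chernoff lower bound for $\mathrm{Bin}(m,q)$ and Lemma~\ref{lem:reverse.ERC} are applicable), plus handling the boundary behavior as $x\to q$ (where $J_q(x)\to 0$ but $x$ stays bounded below) and $x\to 1$ (where $J_q(x)\to \log(1/q)<\infty$). I expect the clean way to package this is to prove that $x\mapsto (I_\lambda+J_q(x))/x$ is continuous on $[q,1)$, tends to $+\infty$ as $x\to 1^-$ or is otherwise controlled, hence attains its infimum at an interior point $x^*$, and then to note $a\le 1/\rho(q,\lambda)$ forces the relevant component size $(a/x^*)\log n \le (1/I_\lambda)\log n$, matching the hypothesis of Lemma~\ref{lem:reverse.ERC}. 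The rest is routine Cram\'er-type estimates and the same covariance bookkeeping as in the proof of Lemma~\ref{lem:clustersize}.
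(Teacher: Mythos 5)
Your proposal is correct and follows essentially the same route as the paper: split over the cluster size $m$, combine Lemma~\ref{lem:sizeERC} with the Chernoff upper bound and Lemma~\ref{lem:reverse.ERC} with the Cram\'er lower bound, identify the rate via the substitution $x=t/m$, use $a\le 1/\rho(q,\lambda)$ and $J_q\ge 0$ to check that the optimizing component size is at most $(1/I_\lambda)\log n$ so that Lemma~\ref{lem:reverse.ERC} applies, and bound the variance by $n\,\mathbb E[\,|\cC(1)|\,\mathds 1_{A_1}]$ as in Lemma~\ref{lem:clustersize}. The ``obstacle'' you flag is resolved exactly as you anticipate: the paper extends $J_q$ continuously to $[q,1]$ by setting $J_q(1)=\log(1/q)$, takes the smallest minimizer $b_*$ of $b\mapsto b(I_\lambda+J_q(a/b))$ over the compact interval $[a,a/q]$, and deduces $b_*\le 1/I_\lambda$.
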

\begin{proof}
We begin by proving the estimate on the mean. To start with, we recall two large deviation estimates for Binomial random variables. On the one hand, for every $x\in [q,1)$ and every $N\ge 1$ (see e.g. Corollary 2.20 in \cite{Hof16}), \begin{equation}\label{eq:chernoff1}
\mathbb P(\text{Bin}(N,q)\ge xN) \le \exp\big(- N \cdot J_q(x)\big),
\end{equation}
and on the other hand, for every fixed $x\in [q,1)$ (see e.g. Theorem 2.2.3 and Exercice 2.2.23 in \cite{DZ09}),
\begin{equation}\label{eq:chernoff2}
\mathbb P(\text{Bin}(N,q)\ge xN) \ge  \exp\big(-(1+o_N(1))\cdot N \cdot J_q(x)\big),
\end{equation}
where the $o_N(1)$ term goes to $0$ as $N\to \infty$. 
Since conditionally on its size the cluster $\cC(1)$ is  uniformly distributed among the subsets of $[n]$ of that size containing the vertex 1, we deduce using~\eqref{eq:chernoff1} and Lemma~\ref{lem:sizeERC} that 
\begin{equation}
\begin{split}\label{eq:X_v}
\mathbb P\Big(\sum_{v\in \cC(1)} X_v\ge a\log n\Big) & \le \sum_{s=a\log n}^{(a/q) \log n} \mathbb P(|\cC(1)|=s) \cdot \mathbb P(\text{Bin}(s,q)\ge a\log n)+ \mathbb P(|\cC(1)|\ge (a/q)\cdot \log n)  \\
& \le \sum_{s=a\log n}^{(a/q)\log n} \exp\left(-\left(I_\lambda +J_q\left(\frac{a\log n}{s}\right)\right)  \cdot s\right)+ n^{-I_\lambda \cdot\, a/q} \\
&\le \mathcal O(\log n)\cdot n^{- \inf_{b\in (a,a/q]}  b(I_\lambda + J_q(a/b))}, 
\end{split}
\end{equation}
and observing that 
\begin{equation}\label{eq:infb}
\inf_{b\in (a,a/q]}  b(I_\lambda + J_q(a/b)) 
= a\cdot  \rho(q,\lambda),
\end{equation}
concludes the proof of the upper bound. 

Now, we concentrate on the lower bound. To start with, we extend $J_q$ to a continuous function on the interval $[q,1]$ by defining $J_q(1)= \log(1/q)$, and let $b_*$ be the smallest real number realizing the infimum of the function $b\mapsto b(I_\lambda + J_q(a/b))$ over the interval $[a,a/q]$. 
Recall that $\log$ is a concave function, so by Jensen's inequality the function $J_q$ is non-negative. Then, together with the fact that $a\le 1/\rho(q,\lambda)$ by our hypothesis,~\eqref{eq:infb} shows that
\[b_* = \frac{a \cdot \rho(q,\lambda)}{I_{\lambda} + J_q(a/b_*)} \le \frac{a \cdot \rho(q,\lambda)}{I_\lambda} \le \frac{1}{I_\lambda}.\]
Thus, by using Lemma~\ref{lem:reverse.ERC} and~\eqref{eq:chernoff2}, we get
\begin{align*} 
\mathbb P\Big(\sum_{v\in \cC(1)} X_v\ge a\log n\Big) 
&\ge\; 
\mathbb P(|\cC(1)|\ge b_*\log n) \cdot \mathbb P(\text{Bin}(b_*\log n,q)\ge a\log n)\\
& \ge \; n^{-(1+o(1))b_*\cdot I_{\lambda}}\cdot n^{-(1+o(1)) b_*\cdot J_q(a/b_*)}=  n^{-(1+o(1)) a\,\cdot\, \rho(q,\lambda)},
\end{align*}
which concludes the proof of the lower bound.

Finally, the proof of the upper bound on the variance is mutatis mutandis the same as the proof of Lemma~\ref{lem:clustersize}, in particular, the same argument leads to
$$\text{Var}[\widetilde Z_{a\log n}]\le n\cdot \mathbb E\left[\left(\sum_{v\in \cC(1)} X_v\right)\cdot \mathds{1}_{\{\sum_{v\in \cC(1)} X_v \,\ge\, a \log n\}} \right],$$
which combined with~\eqref{eq:X_v} yields the desired upper bound. 
\end{proof}

\begin{remark}\label{rem:alla} A close look at the previous proof shows that the upper bound on the mean is in fact valid for all $a>0$. 
\end{remark}

\begin{corollary}\label{cor:LLN.inter.giant}
Let $a=a(q,\lambda)= \frac{1}{\rho(q,\lambda)}>0$. Then,
$$\frac{\max_{u\in [n]} \sum_{v\in \cC(u)} X_v}{\log n} \xrightarrow[n\to\infty]{\mathbb P} a. $$
\end{corollary}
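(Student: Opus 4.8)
The plan is to prove the convergence in probability by combining the first- and second-moment estimates from Lemma~\ref{lem:inter.giant.sub} with a standard truncation of the index $a$. Write $M_n = \max_{u\in [n]} \sum_{v\in \cC(u)} X_v$. The statement is equivalent to showing that for every $\eps>0$, a.a.s.\ $(a-\eps)\log n \le M_n \le (a+\eps)\log n$.

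For the \emph{upper bound}, I would use Markov's inequality together with the mean estimate. By Remark~\ref{rem:alla}, the bound $\mathbb E[\widetilde Z_{t}] = n^{1-(1+o(1))\rho(q,\lambda)\cdot t/\log n}$ (valid for all $t$ of order $\log n$, in particular for $t = (a+\eps)\log n$) gives $\mathbb E[\widetilde Z_{(a+\eps)\log n}] = n^{1 - (1+o(1))\rho(q,\lambda)(a+\eps)} = n^{-(1+o(1))\rho(q,\lambda)\eps} = o(1)$, since $\rho(q,\lambda)a = 1$. As $M_n \ge (a+\eps)\log n$ if and only if $\widetilde Z_{(a+\eps)\log n} \ge 1$, Markov gives $\Pr(M_n \ge (a+\eps)\log n) \le \mathbb E[\widetilde Z_{(a+\eps)\log n}] \to 0$.

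For the \emph{lower bound}, I would apply Chebyshev's inequality to $\widetilde Z_{(a-\eps)\log n}$, using that $a-\eps < a = 1/\rho(q,\lambda)$ so that Lemma~\ref{lem:inter.giant.sub} applies with parameter $a-\eps \in (0, 1/\rho(q,\lambda)]$. Set $t = (a-\eps)\log n$; then $\mu_n := \mathbb E[\widetilde Z_t] = n^{1-(1+o(1))\rho(q,\lambda)(a-\eps)} = n^{(1+o(1))\rho(q,\lambda)\eps} \to \infty$, and $\mathrm{Var}[\widetilde Z_t] \le n^{1-(1+o(1))\rho(q,\lambda)(a-\eps)} = \mu_n^{1+o(1)}$, so in particular $\mathrm{Var}[\widetilde Z_t]/\mu_n^2 \le \mu_n^{-1+o(1)} \to 0$. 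By Chebyshev, $\Pr(\widetilde Z_t = 0) \le \mathrm{Var}[\widetilde Z_t]/\mu_n^2 \to 0$, and on the event $\{\widetilde Z_t \ge 1\}$ there is a vertex $u$ with $\sum_{v\in\cC(u)} X_v \ge t$, i.e.\ $M_n \ge (a-\eps)\log n$. Combining the two bounds and letting $\eps \to 0$ through a countable sequence yields $M_n/\log n \xrightarrow{\mathbb P} a$.

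I do not expect any serious obstacle here: the corollary is a routine second-moment argument and is essentially the analog of deducing~\eqref{LLN.subcritical} and Lemma~\ref{lem:clustersize} for the largest cluster size. The only mild point of care is to make sure the two applications of Lemma~\ref{lem:inter.giant.sub} are legitimate — the upper bound needs Remark~\ref{rem:alla} because $a+\eps$ may exceed $1/\rho(q,\lambda)$, whereas the lower bound stays within the allowed range $(0,1/\rho(q,\lambda)]$ since $a-\eps < a = 1/\rho(q,\lambda)$ — and to note that $\rho(q,\lambda)a = 1$ by the choice of $a$, which drives both limits. Everything else is Markov and Chebyshev.
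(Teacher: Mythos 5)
Your proof is correct and follows essentially the same route as the paper: Markov's inequality with the mean estimate of Lemma~\ref{lem:inter.giant.sub} (via Remark~\ref{rem:alla}) for the upper bound, and a second-moment argument for the lower bound, the only cosmetic difference being that you invoke Chebyshev's inequality where the paper uses the equivalent Cauchy--Schwarz bound $\mathbb{P}(\widetilde Z_t\ge 1)\ge \mathbb{E}[\widetilde Z_t]^2/(\mathbb{E}[\widetilde Z_t]^2+\mathrm{Var}[\widetilde Z_t])$. You also correctly flag the one point of care, namely that $a+\eps$ may exceed $1/\rho(q,\lambda)$ so that Remark~\ref{rem:alla} is needed there while the lower bound stays within the range of Lemma~\ref{lem:inter.giant.sub}.
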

\begin{proof}
If $h>a(q,\lambda)$, then by Markov's inequality and Lemma~\ref{lem:inter.giant.sub} (see also Remark~\ref{rem:alla}) we get 
$$\mathbb P\left(\max_{u\in [n]} \sum_{v\in \cC(u)} X_v \ge h \log n \right) = \mathbb P(\widetilde Z_{h\log n}\ge 1) \le\mathbb E[\widetilde Z_{h\log n}] = o(1). $$ 
Conversely, if $h<a(q,\lambda)$, then again Lemma~\ref{lem:inter.giant.sub} together with the Cauchy-Schwarz inequality gives 
$$\mathbb P\left(\max_{u\in [n]} \sum_{v\in \cC(u)} X_v \ge h \log n \right) = \mathbb P(\widetilde Z_{h\log n}\ge 1) \ge 
\frac{\mathbb E[\widetilde Z_{h\log n}]^2}{\mathbb E[\widetilde Z_{h\log n}]^2+\text{Var}[\widetilde Z_{h\log n}]} = 1-o(1).$$
\end{proof}

\section{\texorpdfstring{Proofs of the main results}{}}
\label{sec:proof.theo}

This section is devoted to the proofs of Theorem~\ref{thm:k>2} and Proposition~\ref{prop:Nell}. We first prove Part~\eqref{pt i} of the theorem together with the proposition in Section~\ref{sec:proof.i}, then Part~\eqref{pt ii} of the theorem in 
Section~\ref{sec:proof.ii}, and finally Part~\eqref{pt iii} in Section~\ref{sec:proof.iii}.

\subsection{\texorpdfstring{Proofs of Theorem~\ref{thm:k>2}~\eqref{pt i} and Proposition~\ref{prop:Nell}}{}}
\label{sec:proof.i}
The proofs of these two results are based on the well-known local convergence of ER random graphs to BGW trees with Poisson offspring distribution. Recall that in the standard setting of uncolored graphs, the local \emph{weak} convergence states the following: denoting by $\mathcal V_L(u)$  the $L$-neighborhood of a vertex $u$ in $G$ (as defined in~\eqref{eq:defG})
or in the BGW tree with $\text{Po}(\Lambda)$ offspring distribution, for every bounded function $(u,H)\mapsto \varphi(u,H)$ defined on pairs $(u,H)$ where $H$ is a finite graph and $u$ a vertex of $H$, one has
$$\frac 1n \sum_{u\in [n]} \mathbb E[\varphi(u,\mathcal V_L(u))]\xrightarrow[n\to \infty]{} \mathbb E[\varphi(\varnothing,\mathcal V_L(\varnothing))],$$
see e.g.\ Theorem 6 in~\cite{Cur17}. It is straightforward to see that the same convergence holds in our setting of colored graphs. The difference is that now, edges are endowed with a label encoding its set of colors. As a consequence, the limiting graph is a BGW tree with $\text{Po}(\Lambda)$ offspring distribution, denoted hereafter by $\textbf{GW}(\Lambda)$, where each edge is colored in a single color and independently of other edges, and where color $i$ is attributed with probability $\lambda_i/\Lambda$ (see the proof of Proposition~\ref{prop:loc.conv} below).

Moreover, using that finite neighborhoods of two given points are mostly independent (in particular, they are unlikely to intersect), it is possible to strengthen the previous convergence in expectation into a convergence in probability, see e.g.~\cite[Theorem 2.19]{Hof16.2} in the case of uncolored graphs. In our case, we obtain the following result, which is our main tool for proving Theorem~\ref{thm:k>2}~\eqref{pt i} and Proposition~\ref{prop:Nell}. Although the proof is standard, we briefly sketch the argument for reader's convenience. A similar result is derived in the proof of Proposition~4.1.7 in~\cite{RVFM22.0}.

\begin{proposition}\label{prop:loc.conv}
Let $\varphi$ be a bounded function on the set of finite rooted graphs whose edges are endowed with a label encoding its set of colors. Then, for any $L\ge 1$, 
\begin{equation}\label{eq:local.conv}
\frac 1n \sum_{u\in [n]} \varphi(u,\mathcal V_L(u)) \xrightarrow[n\to \infty]{\mathbb P} \mathbb E[\varphi(\varnothing,\mathcal V_L(\varnothing))], 
\end{equation}
where $\mathcal V_L(\varnothing)$ is the $L$-neighborhood of the root $\varnothing$ in the colored graph $\emph{\textbf{GW}}(\Lambda)$ defined above. 
\end{proposition}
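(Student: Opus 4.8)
The statement is a standard upgrade of local weak convergence (convergence of expectations) to local convergence in probability, so the plan is to use a second-moment argument. Write $S_n := \tfrac{1}{n}\sum_{u\in[n]}\varphi(u,\mathcal V_L(u))$. By local weak convergence in the colored setting (which I would first record as a quick lemma, obtained exactly as in the uncolored case, e.g.\ Theorem~6 in~\cite{Cur17}, noting only that the limiting BGW tree carries i.i.d.\ edge colors with $\Pr(\text{color }i)=\lambda_i/\Lambda$ because each potential edge of $G$ is present in $G_i$ with probability $\tfrac{\lambda_i}{n}$ and these events are independent across $i$, so conditionally on the edge being present in $G$ its color vector is the corresponding product law, which in the local limit becomes a single-color law after the sparsification), we already have
\[
\mathbb E[S_n] = \frac{1}{n}\sum_{u\in[n]} \mathbb E[\varphi(u,\mathcal V_L(u))] \xrightarrow[n\to\infty]{} \mathbb E[\varphi(\varnothing,\mathcal V_L(\varnothing))].
\]
Hence it suffices to show $\mathrm{Var}(S_n)\to 0$, and then Chebyshev's inequality finishes the argument.

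For the variance, expand
\[
\mathrm{Var}(S_n) = \frac{1}{n^2}\sum_{u,v\in[n]} \mathrm{Cov}\big(\varphi(u,\mathcal V_L(u)),\,\varphi(v,\mathcal V_L(v))\big).
\]
Since $\varphi$ is bounded, say $\|\varphi\|_\infty\le C$, the diagonal and near-diagonal terms contribute at most $O(C^2/n)\to 0$; more precisely, the key point is that for $u\ne v$ the pair of balls $(\mathcal V_L(u),\mathcal V_L(v))$ is, with probability $1-O(1/n)$, a vertex-disjoint pair, and conditionally on disjointness the two balls are \emph{independent} and each distributed as a single ball explored from a fresh vertex. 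Concretely: exploring $\mathcal V_L(u)$ reveals at most $O_L(1)$ vertices in expectation (the $L$-neighborhood of a vertex in a sparse ER graph has bounded expected size, uniformly in $n$), so the probability that $v$ or any vertex of $\mathcal V_L(v)$ falls inside $\mathcal V_L(u)$ is $O_L(1/n)$. On the complementary event one may resample the edges touching $\mathcal V_L(v)$'s exploration from their original product law without disturbing $\mathcal V_L(u)$, giving genuine independence, so that $\mathrm{Cov}$ on this event vanishes. Therefore each off-diagonal covariance is $O(C^2/n)$ (bounding $\varphi$ by $C$ on the bad event and using independence on the good event), and summing over the $n^2$ pairs gives $\mathrm{Var}(S_n)=O(C^2/n)\to 0$.

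Putting the two pieces together, $S_n \to \mathbb E[\varphi(\varnothing,\mathcal V_L(\varnothing))]$ in $L^2$, hence in probability, which is exactly~\eqref{eq:local.conv}. The only genuinely substantive step — and the one I would present with the most care — is the covariance/independence estimate for the two explored neighborhoods: one must (i) control the expected size of an $L$-ball uniformly in $n$ so that the ``overlap'' probability is $O(1/n)$, and (ii) set up the coupling that makes $\mathcal V_L(u)$ and $\mathcal V_L(v)$ exactly independent conditionally on non-overlap, which is where the colored edge labels enter but cause no extra difficulty since the color of each edge is an independent mark. Everything else (the local weak convergence input, Chebyshev, boundedness bookkeeping) is routine, so I would keep the write-up of those parts brief, as the statement itself suggests.
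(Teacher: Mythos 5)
Your overall strategy coincides with the paper's: Chebyshev's inequality, splitting the covariance sum according to whether $v$ lies in $\mathcal V_{2L}(u)$, bounding the ``near'' contribution by the uniformly bounded expected size of a $(2L)$-ball, and arguing approximate independence on the ``far'' event. However, the one step you flag as the substantive one is stated incorrectly: it is \emph{not} true that, conditionally on disjointness of the two balls, $\mathcal V_L(u)$ and $\mathcal V_L(v)$ are exactly independent with each distributed as a fresh $L$-ball. Determining $\mathcal V_L(u)$ requires revealing the status of every potential edge leaving $\mathcal V_{L-1}(u)$, so on the event $\{v\notin\mathcal V_{2L}(u)\}$ those edges are conditioned to be absent; the conditional law of $\mathcal V_L(v)$ given $\mathcal V_L(u)$ is then that of an $L$-ball in a colored Erd\H{o}s--R\'enyi graph on $n-m$ vertices, where $m=|\mathcal V_L(u)|$. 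This law depends on $m$, hence on $\mathcal V_L(u)$, so the conditional covariance does not vanish, and the ``resampling'' you invoke is not legitimate precisely because the edges between $\mathcal V_{L-1}(u)$ and the exploration of $\mathcal V_L(v)$ have already been revealed.

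The repair is exactly where the paper spends its effort: one must compare $\mathbb E_{n-m}[\varphi(v,\mathcal V_L(v))]$ with $\mathbb E_n[\varphi(v,\mathcal V_L(v))]$ and show the difference is $\mathcal O(m/n)$ (the two balls can be coupled to coincide unless some edge joins $\mathcal V_L(v)$ to one of the $m$ removed vertices, an event of conditional probability at most $m|\mathcal V_L(v)|\Lambda/n$), and then use that $\mathbb E[\varphi(u,\mathcal V_L(u))\,|\mathcal V_L(u)|]\le \mathbb E[|\mathcal V_L(u)|]$ is bounded uniformly in $n$ to sum this error over the conditioning on $m$. With that quantitative comparison inserted in place of the exact-independence claim, each off-diagonal covariance is indeed $\mathcal O(1/n)$ and your argument closes as written; without it, the key estimate is asserted rather than proved.
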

\begin{proof}
As already mentioned, the convergence in expectation is a straightforward consequence of the well-known local weak convergence of $G$. Note that conditionally on being present in $G$, an edge has color $i$ with probability
$$\frac{\lambda_i/n}{1-\prod_{j=1}^k(1-\lambda_j/n)}= (1+o(1))\frac{\lambda_i}{\Lambda}.$$
In particular, since $(\lambda_i/\Lambda)_{i=1}^k$ sum up to 1, a.s.\ every edge in the limit has only one color.

To prove convergence in probability, we use Chebyshev's inequality and bound the variance of the random variable appearing on the left-hand side of~\eqref{eq:local.conv}. Assume without loss of generality that $\varphi$ is non-negative and bounded by one. Then, on the one hand, for every $u \in [n]$ and every fixed $L\ge 1$, 
\begin{align}\label{eq:cov.1}
 \sum_{v\in [n]} \mathbb E[\varphi(u,\mathcal V_L(u))&  \varphi(v,\mathcal V_L(v))\cdot 
\mathds{1}_{\{ v\in \mathcal V_{2L}(u)\}} ]  \le \mathbb E[|\mathcal V_{2L}(u)|] \le 1+\dots+\Lambda^{2L}=\mathcal O(1). 
\end{align}
On the other hand, conditionally on the event that $v\notin \mathcal V_{2L}(u)$, or equivalently that $\mathcal V_L(u)\cap \mathcal V_L(v)=\emptyset$, 
and $|\mathcal V_L(u)|=m$ for some $m\ge 1$, $\mathcal V_L(v)$ is distributed as the $L$-neighborhood of $v$ in a colored ER random graph with $n-m$ vertices and parameter $1-\prod_{i=1}^k(1-\tfrac{\lambda_i}{n})$, denoted hereafter by $G_{n-m}'$. Thus, denoting also by $\mathbb E_{n-m}$ the expectation with respect to the distribution of $G_{n-m}'$, one has 
\begin{align*}
&   \mathbb E[\varphi(u,\mathcal V_L(u))  \varphi(v,\mathcal V_L(v))\cdot 
\mathds{1}_{\{ v\notin \mathcal V_{2L}(u)\}} ] \\  
  =
&   \sum_{m\ge 1} \mathbb E\left[\varphi(u,\mathcal V_L(u)) \cdot \mathbb E[\varphi(v,\mathcal V_L(v))\mathds 1_{\{\mathcal V_L(u)\cap \mathcal V_L(v)=\emptyset\}} \mid \mathcal V_L(u)]\cdot  \mathds 1_{\{|\mathcal V_L(u)|=m\}} \right] \\
  \le
&   \sum_{m\ge 1} \mathbb E\left[\varphi(u,\mathcal V_L(u)) \cdot  \mathds 1_{\{|\mathcal V_L(u)|=m\}} \right]\cdot \mathbb E_{n-m}[\varphi(v,\mathcal V_L(v))].
\end{align*}

Now, the set $\mathcal V_L(v)$ on $G_{n-m}'$ is the same as on $G$ unless there is at least one edge between one vertex of this set and one of the $m$ additional vertices which are present in $G$ but not in $G_{n-m}'$. Conditionally on the size of $\mathcal V_L(v)$ in $G_{n-m}'$, this holds with probability bounded from above by $m|\mathcal V_L(v)|\tfrac{\Lambda}{n}$. Therefore, 
$$\mathbb E_{n-m}[\varphi(v,\mathcal V_L(v))] \le 
\mathbb E_n[\varphi(v,\mathcal V_L(v))] + 2 m\mathbb E_{n-m}[|\mathcal V_L(v)|] \cdot \tfrac{\Lambda}{n} =\mathbb E_n[\varphi(v,\mathcal V_L(v))] + \mathcal O\bigg(\frac mn\bigg).$$ 
As a consequence, using that $\mathbb E[\varphi(u, \cV_L(u))\cdot |\mathcal V_L(u)|]\le \mathbb E[|\mathcal V_L(u)|]$, which is bounded uniformly in $n$, we get 
$$  \mathbb E[\varphi(u,\mathcal V_L(u))  \varphi(v,\mathcal V_L(v))\cdot 
\mathds{1}_{\{ v\notin \mathcal V_{2L}(u)\}} ] - \mathbb E[\varphi(u,\mathcal V_L(u))] \mathbb E[  \varphi(v,\mathcal V_L(v))] = \mathcal O\bigg(\frac 1n\bigg).$$ 
Together with~\eqref{eq:cov.1}, and summing over all pairs of vertices $u,v\in [n]$, this gives  
$$\text{Var}\left(\frac 1n \sum_{u\in [n]} \varphi(u,\mathcal V_L(u))\right) = \mathcal O\bigg(\frac 1n\bigg),$$
proving the desired concentration result. This concludes the (sketch of) proof of the proposition. 
\end{proof}

We can now give the proof of our main results. 

\begin{proof}[Proof of Theorem~\ref{thm:k>2}~\eqref{pt i}] 
Assume first that $\lambda_1^*\le 1$. 
In this case, it is well-known that the size of the largest connected component of $G^1$ divided by $n$ converges in probability to zero, and 
thus a fortiori the same must hold for the largest CA-component.

Assume now that $\lambda_1^* > 1$. In this case, a.a.s.\ each of $G^1, \ldots, G^k$ contains a unique giant component denoted by $\cC_{\max}^1, \ldots, \cC_{\max}^k$, respectively. When this is not the case for some $i\in [k]$, we define $\cC_{\max}^i$ to be an arbitrarily chosen largest component in $G^i$. Let also $\mu_i$ be the asymptotic proportion of vertices in $\cC_{\max}^i$. Since a.a.s.\ every non-giant connected component in $G^1, \ldots, G^k$ has size $\mathcal O(\log n)$ (which follows by combining Lemma~\ref{lem:sizeERC} with Theorem~4.15 in~\cite{Hof16}), it is sufficient to show that the size of $\bigcap_{i=1}^k \cC_{\max}^i$ divided by $n$ converges to a positive constant in probability. 

Firstly, note that by~\eqref{LLN.giant} one has
$\mu_i=  \mathbb P(|\cC^i(\varnothing)|=\infty)$, where we keep the notation $\cC^i(u)$ for the connected component of a vertex $u$ in $\textbf{GW}(\Lambda)$ after removal of all edges in color $i$. Thus, for every $i\in [k]$, 
$$\frac{|\cC^i_{\max}|}{n}= \frac 1n \sum_{u\in [n]} \mathds{1}_{\{u\in \cC^i_{\max}\}} \xrightarrow[n\to \infty]{\mathbb P}  \mathbb P(|\cC^i(\varnothing)|=\infty).$$
On the other hand, for every $L\ge 1$, by Proposition~\ref{prop:loc.conv} one has
\begin{equation*}
\frac 1n \sum_{u\in [n]} \mathds{1}_{\{|\cC^i(u)|\,\ge\, L\}} \xrightarrow[n\to \infty]{\mathbb P} \mathbb P(|\cC^i(\varnothing)|\ge L),
\end{equation*}
since the event of having a connected component of size at least $L$ is a measurable function of the $L$-neighborhood. Taking the difference between the terms in the last two displays, we deduce that for every $i\in [k]$ and every $L\ge 1$,
\begin{equation}\label{eq:largecc.notmax}
\frac 1n \sum_{u\in [n]} \mathds{1}_{\{|\cC^i(u)|\,\ge\, L \text{ and }  u\notin \cC^i_{\max} \} } \xrightarrow[n\to \infty]{\mathbb P} \mathbb P(L\le |\cC^i(\varnothing)|< \infty).
\end{equation}
Since the probability on the right-hand side  goes to $0$ as $L\to \infty$, for every $\varepsilon>0$ one can find $L$ such that for all $i\in [k]$, 
$$\limsup_{n\to\infty}\  \mathbb P\Big(\frac 1n \sum_{u\in [n]} \mathds{1}_{\{|\cC^i(u)|\,\ge\, L \text{ and }  u\notin \cC^i_{\max} \} } \ge \frac{\varepsilon}{k}\Big) = 0,$$
which by summation over $i$ gives 
\begin{equation}\label{eq:eps mass}
\limsup_{n\to\infty}\  \mathbb P\Big( \frac 1n \sum_{u\in [n]} \mathds{1}_{\{\exists i\in [k]: \, |\cC^i(u)|\,\ge\, L \text{ and }  u\notin \cC^i_{\max} \} } \ge \varepsilon\Big) = 0.
\end{equation}
Moreover, using Proposition~\ref{prop:loc.conv} again yields 
\begin{equation*}
\frac 1n \sum_{u\in [n]} \mathds{1}_{\{|\cC^i(u)|\,\ge\, L \text{ for all } i\in [k]\}}\xrightarrow[n\to \infty]{\mathbb P} \mathbb P(|\cC^i(\varnothing)|\ge L \text{ for all }i\in [k]).    
\end{equation*}
Then, letting $L\to \infty$ together with~\eqref{eq:eps mass} implies that
$$\frac{|\bigcap_{i=1}^k \cC^i_{\max}|}{n} =\frac 1n \sum_{u\in [n]} \mathds{1}_{\{u \in \cC^i_{\max} \text{ for all }i\in [k]\} } \xrightarrow[n\to \infty]{\mathbb P} \mathbb P(|\cC^i(\varnothing)|=\infty \text{ for all }i\in [k]).$$

To conclude the proof of Theorem~\ref{thm:k>2}~\eqref{pt i}, we show that the above limit is positive. For every $L\ge 1$ and a fixed vertex $u\in [n]$, the events $\{|\cC^i(u)|\ge L\}$ are increasing, so by the FKG inequality (see Theorem 3.1 in~\cite{Gri06})
\[\mathbb P(|\cC^i(u)|\ge L \text{ for all }i\in [k])\ge \prod_{i=1}^{k} \Pr(|\cC^i(u)|\ge L).\] 
Then, letting $n\to \infty$ and using the local convergence of $G$ towards $\mathbf{GW}(\Lambda)$ implies that
\[\mathbb P(|\cC^i(\varnothing)|\ge L \text{ for all }i\in [k])\ge \prod_{i=1}^{k} \Pr(|\cC^i(\varnothing)|\ge L).\]
Finally, letting $L\to \infty$ shows that
\[\mathbb P(|\cC^i(\varnothing)|=\infty \text{ for all }i\in [k])\ge \prod_{i=1}^{k} \Pr(|\cC^i(\varnothing)| = \infty) > 0,\]
as desired.
\end{proof}

\begin{proof}[Proof of Proposition~\ref{prop:Nell}] 
Firstly, we recall the notion of CA-connectivity in $\textbf{GW}(\Lambda)$ from~\cite{RVFM22.0}. Two vertices $u$ and $v$ are declared to be CA-connected if for every $i\in [k]$, either $u$ and $v$ are connected in the subgraph of $\textbf{GW}(\Lambda)$ obtained by removing edges in color $i$, which we denote by  $\textbf{GW}^i(\Lambda)$, or if their connected components in this graph are both infinite. We denote by $\widetilde \cC(\varnothing)$ the CA-component of the root. 

Now, we define $I=\{i:\lambda_i^*>1\}$, and $J=[k]\setminus I$. Also, given two vertices $u$ and $v$ in 
$\textbf{GW}(\Lambda)$ and a subgraph $H$ of $\textbf{GW}(\Lambda)$, we denote by $u\xleftrightarrow{H} v$ the event that $u$ and $v$ are connected in $H$. Then, for $L\ge 1$, $M\ge 1$, and $u\in [n]$, we define
$$\widetilde \cC_{L,M}(u)=\Big(\bigcap_{i\in J} \Big\{v: u\xleftrightarrow{G^i\cap \cV_L(u)} v\Big\}\Big)\cap \Big(\bigcap_{i\in I} \ \Big\{v\in \mathcal V_L(u):  u\xleftrightarrow{G^i\cap \cV_L(u)} v \ \text{or}\  \min(|\cC^i(u)|,|\cC^i(v)|)\ge M \Big\}\Big),$$
with the notation from the proof of Theorem~\ref{thm:k>2}~\eqref{pt i}, and 
$$\widetilde \cC_L(u)=
\Big(\bigcap_{i\in J} \Big\{v\in \mathcal V_L(u): u\xleftrightarrow{G^i\cap \cV_L(u)} v\Big\}\Big)\cap \Big(\bigcap_{i\in I} \ \Big\{v\in \mathcal V_L(u):  u\xleftrightarrow{G^i\cap \cV_L(u)} v \ \text{or both}\ u,v\in \cC^i_{\max}  \Big\}\Big).$$
Note that on the a.a.s.\ event $|\cC_{\max}^i| \ge \tfrac{\mu_i}{2} n$ and for all sufficiently large $n$, every vertex $u$ such that $|\widetilde \cC_{L}(u)| < |\widetilde \cC_{L,M}(u)|$ is at distance at most $L$ from the set 
\[\cS_M = \bigcup_{i\in I} \{v\in G: |\cC^i(v)|\ge M \text{ and }v\notin \cC^i_{\max}\}.\]
However, by the Cauchy-Schwarz inequality, the expected number of such sites divided by $n$ is at most
$$\mathbb E\left[\frac 1n\sum_{v\in [n]} |\cV_{L}(v)|\mathds 1_{\{v\in \cS_M\}}\right] = 
\mathbb E\left[|\cV_{L}(1)|\mathds 1_{\{1\in \cS_M\}}\right] \le \mathbb E[|\cV_{L}(1)|^2]^{1/2} \cdot \mathbb P(1\in \cS_M)^{1/2}.
$$ 
Moreover, by~\eqref{eq:largecc.notmax}, $\mathbb P(1\in \cS_M)\to 0$ as $M\to \infty$ uniformly in $n$, and a straightforward computation shows that for fixed $\Lambda$ and $L$, the second moment of $|\cV_{L}(1)|$ is uniformly bounded in $n$ (using e.g. that it is stochastically dominated by the size of the $L$-neighborhood of the root in a BGW tree with $\text{Bin}(n,\tfrac{\Lambda}n)$ offspring distribution).  Therefore, Markov's inequality implies that for every $\varepsilon>0$ and every $L$, one can find $M$ large enough so that
\begin{equation*}
\limsup_{n\to \infty} \ \mathbb P\left(\frac 1n\sum_{v\in [n]} |\cV_{L}(v)|\mathds 1_{\{v\in \cS_M\}}\ge \varepsilon\right)\le \eps,
\end{equation*}
and in particular,
\begin{equation}\label{eq:conv 1}
\limsup_{n\to \infty} \ \mathbb P\left(\frac{|\{u\in G: |\widetilde \cC_{L}(u)|<|\widetilde \cC_{L,M}(u)|\}|}{n}\ge \varepsilon\right) \le \limsup_{n\to \infty} \Pr\left(\exists i\in [k]: |\cC_{\max}^i|\le \frac{\mu_i n}{2} \right) + \eps = \eps.   
\end{equation}
Similarly, we define 
$$\widetilde \cC_{L,M}(\varnothing)=
\Big(\bigcap_{i\in J} \Big\{v\in \mathcal V_L(\varnothing)\cap \cC^i(\varnothing)\Big\}\Big)\cap \Big(\bigcap_{i\in I} \ \Big\{v\in \mathcal V_L(\varnothing):  v\in \cC^i(\varnothing)\ \text{or}\  \min(|\cC^i(\varnothing)|,|\cC^i(v)|)\ge M \Big\}\Big),$$
and 
$$\widetilde \cC_L(\varnothing)=
\Big(\bigcap_{i\in J} \Big\{v\in \mathcal V_L(\varnothing)\cap \cC^i(\varnothing)\Big\}\Big)\cap \Big(\bigcap_{i\in I} \ \Big\{v\in \mathcal V_L(\varnothing):  v\in \cC^i(\varnothing)\ \text{or}\  |\cC^i(\varnothing)|=|\cC^i(v)|=\infty \Big\}\Big),$$
which is the decreasing limit of $\widetilde \cC_{L,M}(\varnothing)$ as $M\to \infty$. 
Moreover, 
\begin{align*}
    \Pr\Big(|\widetilde \cC_{L}(\varnothing)|<|\widetilde \cC_{L,M}(\varnothing)|\Big) \le  \mathbb E\Big[|\widetilde \cC_{L,M}(\varnothing)| - |\widetilde \cC_L(\varnothing)|\Big] \le \sum_{i\in I} \mathbb E\left[\sum_{v\in \mathcal V_L(\varnothing)} \mathds 1_{\{v\notin \cC^i(\varnothing),\, M\,\le\, |\cC^i(v)|\, <\, \infty\}}\right]. 
\end{align*}
Fix $i\in I$. For every edge $e\in \textbf{GW}(\Lambda)$, let us denote by $e^+$ the endvertex of $e$ which is farther from the root. Then, for every vertex $v\in \textbf{GW}(\Lambda)$ such that $\cC^i(\varnothing) \neq \cC^i(v)$, the (unique) path between $\varnothing$ and $v$ in $\textbf{GW}(\Lambda)$ contains an edge in color $i$. Considering the closest such edge to $v$, we get
$$\sum_{v\in \mathcal V_L(\varnothing) }\mathds{1}_{\{v\notin \cC^i(\varnothing),\, M\,\le\, |\cC^i(v)|\, <\, \infty\}} 
\le \sum_{e\in \mathcal V_L(\varnothing),\, \text{$e$ in color $i$}} |\cC^i(e^+)\cap \mathcal V_L(e^+)|\cdot \mathds 1_{\{M\,\le\, |\cC^i(e^+)|\,<\,\infty\}}.  
$$
Since for any edge $e$ in color $i$, the component $\cC^i(e^+)$ is contained in the subtree of the descendants of $e^+$, and is thus independent of the remainder of $\textbf{GW}(\Lambda)$, we have 
$$\mathbb E\left[\sum_{v\in \mathcal V_L(\varnothing) }\mathds{1}_{\{v\notin \cC^i(\varnothing),\, M\,\le\, |\cC^i(v)|\, <\, \infty\}} \right]
\le \mathbb E[|\mathcal V_L(\varnothing)|] \cdot \mathbb E\left[|\mathcal V_L(\varnothing)|\cdot \mathds 1_{\{M\,\le\, |\cC^i(\varnothing)|\,<\,\infty\}}\right]. 
$$
Then, using Cauchy-Schwarz inequality as before, we get that
\begin{equation}\label{eq:conv 2}
\Pr(|\widetilde \cC_{L}(\varnothing)|<|\widetilde \cC_{L,M}(\varnothing)|)\xrightarrow[M\to \infty]{} 0. 
\end{equation}
The next step is to notice that since the sets $\widetilde \cC_{L,M}(u)$ are measurable with respect to the $(L+M)$-neighborhood of a vertex $u$, for every $\ell\ge 1$, Proposition~\ref{prop:loc.conv} implies that 
$$\frac{1}{n}\sum_{u\in [n]}\mathds{1}_{\{|\widetilde \cC_{L,M}(u)|\,=\,\ell\}} \xrightarrow[n\to \infty]{\mathbb P} \mathbb P(|\widetilde \cC_{L,M}(\varnothing)|= \ell).$$
We remark that this last step is reminiscent of a similar convergence in Lemma~5.2.4 in~\cite{RVFM22.0}. Together with~\eqref{eq:conv 1} and~\eqref{eq:conv 2}, by letting $M\to \infty$ we get that for any $L\ge 1$,
\begin{equation}\label{eq:conv.fixedL}
\frac{1}{n}\sum_{u\in [n]}\mathds{1}_{\{|\widetilde \cC_L(u)|\,=\,\ell\}} \xrightarrow[n\to \infty]{\mathbb P} \mathbb P(|\widetilde \cC_L(\varnothing)|= \ell).
\end{equation}
Finally, to conclude the proof of~\eqref{eq:asympt.CA}, it amounts to consider the $L\to \infty$ limit in the last display. For the right-hand side, we just observe that the CA-component of the root is the increasing limit of the sets $\widetilde \cC_L(\varnothing)$ as $L\to \infty$, from which it follows that for any $\ell \ge 1$, 
$$ 
\mathbb P(|\widetilde \cC_L(\varnothing)|=\ell, |\widetilde \cC(\varnothing)|>\ell) \xrightarrow[L\to \infty]{} 0,$$
which yields for any $\ell\ge 1$, 
\begin{equation}\label{eq:conv.Linfty1}
\mathbb P(|\widetilde \cC_L(\varnothing)|= \ell) \xrightarrow[L\to \infty]{} \mathbb P(|\widetilde \cC(\varnothing)|=\ell). 
\end{equation}
It remains to show the corresponding convergence for a typical vertex of $G$. We distinguish two cases.  
If the set $J$ is nonempty (or equivalently if $\lambda_1^*\le 1$), then we claim that for any vertex $u\in [n]$, we have
\begin{equation}\label{inclusion.Jnonempty}
\{\widetilde \cC_L(u) \neq \widetilde \cC(u)\}
\ \subseteq \  \left(\bigcup_{i\in J} \{|\cC^i(u)|\ge L\}\right) \cup \left(\bigcup_{i\in I} \{|\cC^i(u)|\ge L \text{ and } u\notin \cC_{\max}^i\}\right).
\end{equation}
Indeed, for the event on the left-hand side of~\eqref{inclusion.Jnonempty} to hold, 
either there is a vertex in $\widetilde \cC(u)\setminus \mathcal V_L(u)$, which together with $\widetilde \cC(u)\subseteq \cC^1(u)$ implies that $|\cC^1(u)|\ge L$, or there is a vertex $v$ in $\widetilde \cC(u)\cap \mathcal V_L(u)$ outside $\widetilde \cC_L(u)$, which means that there is $i\in [k]$ such that $u$ and $v$ are connected by a path in $G^i$ exiting $\mathcal V_L(u)$, and if $i\in I$, additionally $\cC^i(u)\neq \cC^i_{\max}$.
Now, given $\varepsilon > 0$, one can choose $L$ large enough so that $\mathbb P(|\cC^i(\varnothing)|\ge L)\le \tfrac{\varepsilon}{2k}$ for all $i\in J$ and $\mathbb P(L\le |\cC^i(\varnothing)| < \infty)\le \tfrac{\varepsilon}{2k}$ for all $i\in I$, and then using Proposition~\ref{prop:loc.conv}, we get that
\begin{equation}
\begin{split}\label{eq:caseJnonempty}
&\limsup_{n\to \infty}\ \mathbb P\left(\frac{|\{u\in G: |\widetilde \cC_{L}(u)|<|\widetilde \cC(u)|\}|}{n}
\ge \varepsilon \right)\\
\le\;
&\sum_{i\in J}\limsup_{n\to \infty}\ \mathbb P\left(\tfrac{|\{u\in G\,:\, |\cC^i(u)|\,\ge\, L\}|}{n}\ge \frac{\varepsilon}{k} \right) + \sum_{i\in I}\limsup_{n\to \infty}\ \mathbb P\left(\tfrac{|\{u\in G\,:\, L\,\le\, |\cC^i(u)|\, <\, \infty\}|}{n}\ge \frac{\varepsilon}{k} \right) = 0.
\end{split} 
\end{equation}
We consider now the slightly more difficult case when 
$J$ is empty. Let $A= \bigcap_{i=1}^k \cC^i_{\max}$. One has 
\begin{equation}\label{eq:conv.Linfty2}
\big\{|\widetilde \cC_L(u)|= \ell, |\widetilde \cC(u)|>\ell \big\}\ \subseteq \ \Big\{|\widetilde \cC_L(u)|= \ell, u\in A\Big\} \cup\left(\bigcup_{i=1}^k  \{|\cC^i(u)|\ge L, \cC^i(u)\neq \cC^i_{\max}\}\right) 
\end{equation}
since either $u\in A$, or there is a vertex $v$ and $i\in [k]$ such that $u,v\notin \cC^i_{\max}$ but $u$ and $v$ are connected by a path in $G^i$ exiting $\cV_L(u)$. Moreover, note that if $u\in A$, then $\widetilde \cC(u)= A$, and thus 
$$\Big\{|\widetilde \cC_L(u)|= \ell, u\in A\Big\}\ \subseteq\ \Big\{\big|A\cap \mathcal V_L(u)\big| =\ell, |\mathcal V_L(u)|\ge L \Big\}\cup \{|A| = \ell\}.$$
Indeed, if $u\in A$ and $|A|\ge \ell+1$, then $A$ must necessarily contain a vertex outside $\cV_L(u)$, which means that $|\cV_L(u)|\ge L$. Note that Theorem~\ref{thm:k>2}~\eqref{pt i} implies that $\Pr(|A|=\ell)\to 0$ as $n\to \infty$, so we concentrate on the first event in the union above. For $M\ge 1$, define
$$A_M= \{v : |\cC^i (v)|\ge M \text{ for all }i\in [k]\},$$
where for convenience we see $A_M$ as a vertex subset of both $G$ or  $\textbf{GW}(\Lambda)$.  
By a similar argument as for~\eqref{eq:conv 1}, we know that for every $\varepsilon>0$, there exists $M$ such that
$$\limsup_{n\to \infty} \ \mathbb P\left(\Big|\frac 1n \sum_{u\in [n]} \mathds 1_{\{|A\cap \mathcal V_L(u)|\,=\,\ell,\,|\mathcal V_L(u)|\,\ge\, L\}} - \frac 1n \sum_{u\in [n]} \mathds 1_{\{|A_M\cap \mathcal V_L(u)|\,=\,\ell,\,|\mathcal V_L(u)|\,\ge\, L\}}\Big|\ge \varepsilon\right)=0. $$
However, by Proposition~\ref{prop:loc.conv} one has for any $L\ge 1$ and $M\ge 1$, 
$$\frac 1n \sum_{u\in [n]} \mathds 1_{\{|A_M\cap \mathcal V_L(u)|\,=\,\ell,\,|\mathcal V_L(u)|\,\ge\, L\}}\xrightarrow[n\to\infty]{\mathbb P} \mathbb P\big(|A_M\cap \mathcal V_L(\varnothing)|=\ell,|\mathcal V_L(\varnothing)|\ge L\big), $$ 
which goes to $0$ as $L\to \infty$ for any fixed $M\ge 1 $. Indeed, this can be seen by exploring $\mathcal V_L(u)$ in several steps.  
At each step, we fix a vertex $v$ at the boundary of the already explored set and explore the successors of $v$ at distance at most $M$ from $v$. Note that the probability that 
$v$ belongs to $A_M$ is bounded from below by a positive constant, which is  independent of the previous steps. Furthermore, as $L\to \infty$ and on the event $\{|\mathcal V_L(\varnothing)|\ge L\}$, the number of steps goes almost surely to infinity, and thus the probability of the event $\{|A_M\cap \mathcal V_L(\varnothing)|=\ell\}$ tends to 0 for every fixed $\ell$.  
Then, by using also~\eqref{eq:eps mass} again to handle the second union in~\eqref{eq:conv.Linfty2}, we deduce that for every fixed $\eps > 0$ and $\ell\ge 1$,  there is a sufficiently large $L$ so that,
$$\limsup_{n\to\infty} \Pr\left(\frac{|\{u\in G: |\widetilde \cC_L(u)| = \ell, |\widetilde \cC(u)| > \ell\}|}{n}\ge \eps\right) = 0.$$ Together with \eqref{eq:conv.fixedL}, \eqref{eq:conv.Linfty1} and~\eqref{eq:caseJnonempty}, this proves~\eqref{eq:asympt.CA}.

The last piece of the proof of the proposition is to show that for any $\ell \ge 2$, $\nu_\ell = \mathbb P(|\widetilde \cC(\varnothing)|=\ell)$ is positive if and only if $\lambda_k^*>1$, and that the constant $a_1$ appearing in Part~\eqref{pt i} of Theorem~\ref{thm:k>2} is equal to the probability that the CA-component of the root is infinite. This part bears close resemblance to the proof of Proposition 2.18~(ii) from~\cite{RVFM22.0}. For the first part, assume that $\lambda_k^* > 1$, and let  $\ell \ge 1$ be given. Then, with positive probability one can have altogether $|\cC_k(\varnothing)|=\ell$, all vertices of $\cC_k(\varnothing)$ are connected to infinity in $\textbf{GW}^k(\Lambda)$, and  $|\cC^i(\varnothing)|<\infty$ for all values of $i$ different from $k$. If the last events hold simultaneously, one can observe that the CA-component of the root is precisely $\cC_k(\varnothing)$, and thus it has size $\ell$. Conversely, if $\lambda_k^*\le 1$, then it is well-known that all the components $\cC^i(\varnothing)$ are a.s. finite, and since edges have a.s.\ a unique color, this implies that the CA-component of the root is necessarily reduced to a single vertex.

For the last part, note that on the one hand, if $|\widetilde \cC(\varnothing)| = \infty$, then $|\cC^i(\varnothing)| = \infty$ for all $i\in [k]$, so
\[a_1 = \mathbb P(|\cC^i(\varnothing)| = \infty \text{ for all $i\in [k]$})\ge \mathbb P(|\widetilde \cC(\varnothing)| = \infty).\]
The proof of the reverse inequality appears as Lemma~5.1.1 in~\cite{RVFM22.0}; we provide it for completeness. First, notice that if $a_1=0$, there is nothing to prove. Thus, we may assume that $a_1>0$, which by Part~\eqref{pt i} of Theorem~\ref{thm:k>2} is equivalent to $\lambda_1^*>1$. Suppose that each of the components $\cC^i(\varnothing)$ for $i\in [k]$ is infinite. We prove that in this case, the CA-component of the root is also a.s.\ infinite. To begin with, the well-known Kesten-Stigum theorem~\cite{KS66} implies that on the event $\{|\cC^1(\varnothing)| = \infty\}$, the number of vertices in generation $L$ in the tree $\cC^1(\varnothing)$ goes to infinity almost surely as $L\to \infty$. However, conditionally on the vertices $v_1,\dots,v_N$ in the $L$-th generation of that tree, 
the subtrees $T_1,\dots,T_N$ of $\textbf{GW}(\Lambda)$ emanating from the vertices $v_1,\dots,v_N$, respectively, are all independent. Thus, the probability that for at least $a_1N/2$ of them, the connected components of their root are infinite in each of $\textbf{GW}^i(\Lambda)$ for $i\in [k]$, goes to one as $N\to \infty$. Moreover, all such vertices are in the CA-component of $\varnothing$. But as already mentioned, on the event $\{|\cC^1(\varnothing)|= \infty\}$ we have that $N=N(L)\to\infty$ almost surely as $L\to \infty$, so the result follows.
\end{proof}

\subsection{\texorpdfstring{Proof of Theorem~\ref{thm:k>2}~\eqref{pt ii} and Proposition~\ref{prop:critical}}{}} \label{sec:proof.ii}

The proof starts with the following general lemma that will also be used for the proof of Theorem~\ref{thm:k>2}~\eqref{pt iii} in the next section. For $I\subset [k]$, write $\lambda_I^* = \Lambda - \sum_{i\in I} \lambda_i$.

\begin{lemma}\label{lem:Ijconnect}
For every $j\in [k]$ and $I\subseteq [k]\setminus \{j\}$ such that $\max(\lambda_I^*,\lambda_j^*)<1$, one has uniformly in $n$, 
$$\lim_{M\to \infty} \mathbb P\big(\exists u\neq v: u\xleftrightarrow{G^I}v,\,  u\xleftrightarrow{G^j}v,\, u\stackrel{G^{I\cup\{j\}}}{\centernot\longleftrightarrow}v, \, \max(|\cC^I(u)|,|\cC^j(u)|)\ge M\big) = 0.$$
\end{lemma}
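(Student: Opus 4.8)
The plan is to bound the probability of the bad event by a union bound over candidate pairs $(u,v)$, controlling the combinatorial cost by carefully exploring the connected components that witness the two "conflicting" paths. Fix $j$ and $I$ with $\max(\lambda_I^*,\lambda_j^*)<1$, and observe that on the bad event there is a pair $u\neq v$ together with a path $P_I$ from $u$ to $v$ using only colors outside $I$, and a path $P_j$ from $u$ to $v$ using only colors outside $\{j\}$; moreover these two paths must be edge-disjoint in the following strong sense: since $u$ and $v$ are \emph{not} connected in $G^{I\cup\{j\}}$, no edge can simultaneously have a color outside $I$ and a color outside $\{j\}$ \textemdash{} wait, that is not quite right, so instead I would argue that the union $P_I\cup P_j$ cannot lie inside $G^{I\cup\{j\}}$, which forces $P_I$ to use at least one edge of color $j$ and $P_j$ to use at least one edge of some color in $I$. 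The key structural consequence is that $P_I\cup P_j$ is a connected subgraph of $G$ containing a cycle (the two internally-disjoint-or-not $u$\textendash$v$ paths close up a cycle), and that $\cC^I(u)\supseteq P_I$, $\cC^j(u)\supseteq P_j$.

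The heart of the argument is then an estimate, uniform in $n$, on the expected number of pairs $(u,v)$ admitting such a configuration with $\max(|\cC^I(u)|,|\cC^j(u)|)\ge M$. I would split on which of the two components is the large one, say $|\cC^I(u)|\ge M$ (the other case being symmetric up to relabelling). Reveal $\cC^I(u)$ first: since $\lambda_I^*<1$, the graph $G^I = G(n,\tfrac{\lambda_I^*}{n})$ (up to the $(1+o(1))$ in the edge probability) is subcritical, so by Lemma~\ref{lem:sizeERC} we have $\mathbb P(|\cC^I(u)|\ge M)\le \e^{-I_{\lambda_I^*}M}$, which already gives smallness. The remaining factor is the cost of the path $P_j$ exiting $\cC^I(u)$: conditionally on $\cC^I(u)$, the edges of colors in $I$ (which are disjoint from $G^I$ as a color class, hence independent of the revealed set) together with the non-revealed structure must link $u$ to some $v\in \cC^I(u)$ via a path using at least one $I$-colored edge. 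One bounds the expected number of such $v$ by summing, over path lengths $r$, over the $\le |\cC^I(u)|\cdot n^{r-1}$ choices of an $r$-step walk, each contributing $(\Lambda/n)^r$ times (a factor accounting for the forced $I$-colored edge, which is $O(1)$ after the sum), so that conditionally on $\cC^I(u)$ this expected number is $O(|\cC^I(u)|^2/n)\cdot O(1) = O(|\cC^I(u)|^2/n)$. Taking expectations and using $\mathbb E[|\cC^I(u)|^2\mathds 1_{\{|\cC^I(u)|\ge M\}}]\to 0$ as $M\to\infty$ (again by Lemma~\ref{lem:sizeERC}, the subcritical cluster having exponential tails so all moments of the truncated size vanish), and then summing over the $n$ choices of $u$, we get that the expected number of bad pairs is at most $n\cdot O(1/n)\cdot \mathbb E[|\cC^I(1)|^2\mathds 1_{\{|\cC^I(1)|\ge M\}}] = o_M(1)$, uniformly in $n$. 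Markov's inequality finishes this sub-case, and the sub-case $|\cC^j(u)|\ge M$ is handled identically using $\lambda_j^*<1$.

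The main obstacle I anticipate is bookkeeping the \emph{independence} correctly: the events "$P_I\subseteq G^I$'' and "$P_j$ uses an $I$-colored edge'' involve overlapping edge sets once $P_I$ and $P_j$ share vertices, and the cycle that $P_I\cup P_j$ creates could re-enter $\cC^I(u)$ repeatedly. The clean way around this is to reveal $\cC^I(u)$ entirely first (this determines $G^I$ restricted to that component and tells us nothing about $I$-colored edges, which form an independent Erd\H{o}s\textendash R\'enyi layer), then note that for $v\in\cC^I(u)$ to be \emph{dis}connected from $u$ in $G^{I\cup\{j\}}$ while connected in $G^j$, the witnessing $G^j$-path must contain an $I$-colored edge, and such an edge together with a walk in the ambient graph supplies the extra $1/n$-type gain beyond the trivial bound $\mathbb P(v\in\cC^I(u))\le |\cC^I(u)|/n$. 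Even if one cannot extract an extra power of $n$ cleanly, the bound $n\cdot \mathbb E[|\cC^I(1)|\,\mathds 1_{\{|\cC^I(1)|\ge M\}}]\cdot \frac{|\cC^I(1)|}{n}$-type estimate, i.e. $\mathbb E[|\cC^I(1)|^2\mathds 1_{\{|\cC^I(1)|\ge M\}}]$, already tends to $0$ with $M$ uniformly in $n$, which is all that is needed; so the argument is robust even if the sharpest counting is slightly lossy.
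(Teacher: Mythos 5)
Your overall strategy is the same as the paper's: reveal $\cC^I(u)$ first, use the subcritical tail bound for $\{|\cC^I(u)|\ge M\}$, observe that the witnessing $G^j$-path from $u$ to $v$ must contain an edge all of whose colors lie in $I\cup\{j\}$ and hence must leave the revealed component, and charge an extra $O(\mathrm{poly}(|\cC^I(u)|)/n)$ for that excursion before a union bound over pairs and Markov. The paper implements the excursion cost by noting that the path must connect two vertices $w_1,w_2\in\cC^I(u)$ through $G^j$ \emph{minus the edges of} $\cC^I(u)$, and then invokes the subcritical two-point bound of Corollary~\ref{cor:connectionER} for $G^j$ (checking that the conditioning on $\cC^I(u)$, which only forces edges incident to it to have all colors in $I$, cannot increase this probability). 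Your conditional bound $O(|\cC^I(u)|^2/n)$ is exactly the paper's.

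However, your quantitative step has a genuine gap as written. You count $r$-step walks in the \emph{ambient} graph with weight $(\Lambda/n)^r$ per walk and claim the sum over $r$ is $O(1)$; but under the hypotheses of the lemma $\Lambda$ can exceed $1$ (e.g.\ $k=2$, $\lambda_1=\lambda_2=0.9$ gives $\lambda_1^*=\lambda_2^*=0.9<1$ and $\Lambda=1.8$), so $\sum_r\Lambda^r$ diverges and the estimate collapses. The fix is to use that the excursion lies in $G^j$, whose edge probability is at most $\lambda_j^*/n$ with $\lambda_j^*<1$ by hypothesis — equivalently, to bound the excursion by the two-point connection probability in the subcritical graph $G^j$, which is $O(1/n)$. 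Relatedly, a walk-count that charges every edge of $P_j$ a fresh factor of order $1/n$ is not legitimate for edges already revealed inside $\cC^I(u)$; isolating the maximal segment of $P_j$ that uses no edge of $\cC^I(u)$ (with endpoints $w_1,w_2\in\cC^I(u)$) resolves both problems at once. Finally, your closing remark that the argument survives ``even if one cannot extract an extra power of $n$'' is not correct: without the excursion's extra $1/n$, the union bound over the $n^2$ pairs only yields $n\cdot\mathbb E\big[|\cC^I(1)|\mathds 1_{\{|\cC^I(1)|\ge M\}}\big]$, which diverges with $n$ for fixed $M$; what is genuinely harmless to lose is only the polynomial power of $|\cC^I(u)|$ in front, not the factor $1/n$.
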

\begin{proof}
Fix $\varepsilon>0$ and a set $I\subseteq [k]\setminus \{j\}$ with $\max(\lambda_I^*,\lambda_j^*)<1$. For every $u,v\in [n]$, define the event
$$\cA_{u,v}:=\big\{u\xleftrightarrow{G^I}v, \, u\xleftrightarrow{G^j}v, \, u\stackrel{G^{I\cup\{j\}}}{\centernot\longleftrightarrow} v\big\}.$$
Then, on the event $\cA_{u,v}$, 
there exists a path $P$ connecting $u$ and $v$ not using color $j$. Since $u$ and $v$ are not connected in $G^{I\cup \{j\}}$, $P$ must contain at least one edge which is not in $G^I$. 
In particular, since $v$ is a vertex in $\cC^I(u)$, $P$ must exit the graph $\cC^I(u)$ at some point and return to it later.
Consequently, on the event $\cA_{u,v}$, there must exist 
$w_1,w_2 \in \cC^I(u)$ which are connected in $G^j\setminus \cC^I(u)$ (where the latter graph is the subgraph of $G^j$ with vertex set $[n]$ obtained by removing only the edges of $\cC^I(u)$). Moreover, note that conditioning on $\cC^I(u)$ only forces edges in $G\setminus \cC^I(u)$ with at least one endvertex in $\cC^I(u)$ to have a color in $I$, while it does not reveal any information for the remaining edges.  
Thus, for every fixed graph $H$ containing $u$, one has
$$\mathbb P\Big(w_1\xleftrightarrow{G^j\setminus H}w_2
\mid \cC^I(u)=H\Big) \le \mathbb P\Big(w_1\xleftrightarrow{G^j\setminus H}w_2\Big) \le \mathbb P(w_1\xleftrightarrow{G^j}w_2) = \mathcal O\left(\frac 1n\right),$$
where for the first inequality we use that conditioning on $\{\cC^I(u)=H\}$ makes the event $\{w_1\xleftrightarrow{G^j\setminus H}w_2\}$ harder since, by the previous discussion, it forces edges emanating from $w_1$ and $w_2$ to have a color in $I$, 
and for the last equality we use Corollary~\ref{cor:connectionER} together with the fact that $G^j$ is stochastically dominated by $G(n,\lambda_j^*/n)$ (and that $\lambda_j^*<1$ by hypothesis). As a consequence, on the event $\{v\in \cC^I(u)\}$,
$$\mathbb P\Big(u\xleftrightarrow{G^j}v, \, u\stackrel{G^{I\cup\{j\}}}{\centernot\longleftrightarrow} v
\mid \cC^I(u)\Big)\le \sum_{w_1,w_2\in \cC^I(u)} 
\mathbb P\Big(w_1\xleftrightarrow{G^j\setminus \cC^I(u)}w_2
\mid \cC^I(u)\Big) = \cO\left(\frac{|\cC^I(u)|^2}{n}\right).$$ 
Therefore, 
\begin{align}
\mathbb P\big(\cA_{u,v}, \, |\cC^I(u)|\ge M\big)
& = \cO\left(\frac{1}{n}\sum_{t\,\ge\, M} t^2 \cdot \mathbb P(v\in \cC^I(u)\mid |\cC^I(u)|=t) \cdot \mathbb P(|\cC^I(u)| = t)\right)\nonumber \\
& = \cO\left(\frac{1}{n^2} \sum_{t\,\ge\, M} t^3 \cdot \mathbb P(|\cC^I(u)|=t)\right) = \cO\left(\frac {M^3\, \e^{-M\cdot I_{\lambda_I}}}{n^2}\right),\label{eq:part 1}
\end{align}
where we use~\eqref{prob.cond.connect} for the second equality and Lemma~\ref{lem:sizeERC} for the last one. Also, by a similar argument
\begin{equation}\label{eq:part 2}
\mathbb P\big(\cA_{u,v}, \, |\cC^j(u)|\ge M\big)=\cO\left(\frac {M^3\, \e^{-M\cdot I_{\lambda_j}}}{n^2}\right).
\end{equation} 
A union bound by summing~\eqref{eq:part 1} and~\eqref{eq:part 2} over all possible pairs of vertices $\{u,v\}$ of $G$, and letting $M\to \infty$ finishes the proof.
\end{proof}

We call a set \emph{connected} if its vertices belong to the same connected component.

\begin{lemma}\label{lem:intersection}
Suppose that $\lambda_{k-1}^* < 1$. Then, uniformly in $n$, 
\[\lim_{M\to \infty} \Pr(\exists S\subseteq [n]: |S|\ge M, \text{$S$ is connected in each of $G^1,\dots,G^{k-1}$ but not in $G_k$}) = 0.\]
\end{lemma}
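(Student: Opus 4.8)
The plan is to reduce the statement to an application of Lemma~\ref{lem:Ijconnect} with a suitable choice of $I$ and $j$, combined with a crude bound on the diameter/size of subcritical components. First observe that $\lambda_{k-1}^* < 1$ forces $\lambda_j^* < 1$ for all $j \in \{k-1, k\}$, and more importantly, since $\lambda_{k-1}^* = \Lambda - \lambda_{k-1} \ge \Lambda - \lambda_j$ for $j \le k-1$, we also get $\lambda_j^* \le \lambda_{k-1}^* < 1$ for every $j \le k-1$; in particular each $G^j$ with $j \le k-1$ is subcritical (stochastically dominated by $G(n, \lambda_j^*/n)$). Now suppose $S$ is a set with $|S| \ge M$ that is connected in each of $G^1, \dots, G^{k-1}$ but not in $G_k$. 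Pick two vertices $u, v \in S$ that lie in distinct connected components of $G_k$; then $u$ and $v$ are connected in each $G^j$ for $j \le k-1$, but $u \stackrel{G_k}{\centernot\longleftrightarrow} v$, i.e. $u \stackrel{G^{\{1,\dots,k-1\}}}{\centernot\longleftrightarrow} v$ since $G^{\{1,\dots,k-1\}} = G_k$.

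The key step is to organize this as a telescoping argument over the colors $1, \dots, k-1$. Since $S$ is connected in $G^1$ but $u,v$ are not connected in $G^{\{1,\dots,k-1\}}$, there is a smallest index $j \in \{2, \dots, k-1\}$ such that $u$ and $v$ are connected in $G^{\{1,\dots,j-1\}}$ but not in $G^{\{1,\dots,j\}}$ (here $G^{\{1,\dots,j-1\}}$ means removing colors $1, \dots, j-1$). Set $I = \{1, \dots, j-1\}$; then $\lambda_I^* = \Lambda - \sum_{i \le j-1} \lambda_i \le \lambda_{k-1}^* < 1$ and also $\lambda_j^* \le \lambda_{k-1}^* < 1$, and we have $u \xleftrightarrow{G^I} v$, $u \xleftrightarrow{G^j} v$ (the latter because $u,v$ are connected in $G^1 \supseteq$... — actually more directly, $S$ is connected in $G^j$ for $j \le k-1$ by hypothesis), and $u \stackrel{G^{I \cup \{j\}}}{\centernot\longleftrightarrow} v$. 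It remains only to produce the size condition $\max(|\cC^I(u)|, |\cC^j(u)|) \ge M$: since $S \subseteq \cC^I(u)$ (as $S$ is connected in $G^I$, being connected in $G^1$ and... — here I need $S$ connected in $G^I$, which holds because $I \subseteq \{1,\dots,k-1\}$ and $S$ is connected in each $G^i$; but connectivity in several graphs does not immediately give connectivity in the intersection). Let me instead take $\cC^1(u) \supseteq S$, so $|\cC^1(u)| \ge M$; to fit the lemma I choose the index $j$ above with $I$ possibly just $\{1\}$ handled separately, or I note $|\cC^j(u)| \ge |S| \ge M$ for any $j \le k-1$ since $S$ is connected in $G^j$. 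So the size hypothesis of Lemma~\ref{lem:Ijconnect} holds with room to spare.

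Putting it together: on the event in the statement with parameter $M$, there exist $j \in \{2,\dots,k-1\}$, a set $I = \{1,\dots,j-1\} \subseteq [k]\setminus\{j\}$ with $\max(\lambda_I^*, \lambda_j^*) < 1$, and vertices $u \neq v$ with $u \xleftrightarrow{G^I} v$, $u \xleftrightarrow{G^j} v$, $u \stackrel{G^{I\cup\{j\}}}{\centernot\longleftrightarrow} v$ and $\max(|\cC^I(u)|, |\cC^j(u)|) \ge |\cC^j(u)| \ge M$. Hence the probability in the statement is bounded, by a union bound over the finitely many choices of $j$, by $\sum_{j=2}^{k-1} \mathbb P\big(\exists u \neq v: u\xleftrightarrow{G^I}v,\, u\xleftrightarrow{G^j}v,\, u\stackrel{G^{I\cup\{j\}}}{\centernot\longleftrightarrow}v,\, \max(|\cC^I(u)|,|\cC^j(u)|) \ge M\big)$ with $I = \{1,\dots,j-1\}$, each term of which tends to $0$ uniformly in $n$ as $M \to \infty$ by Lemma~\ref{lem:Ijconnect}.

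**Main obstacle.** The delicate point I expect is the verification that the ``smallest bad index'' $j$ can always be extracted so that simultaneously $u \xleftrightarrow{G^I} v$, $u \xleftrightarrow{G^j} v$ and $u \stackrel{G^{I\cup\{j\}}}{\centernot\longleftrightarrow} v$ — in particular ensuring connectivity in $G^I$ (intersection-type statement) rather than merely in each individual $G^i$. The clean way around this is to not require $S$ connected in $G^I$ at all: run the telescope on the pair $(u,v)$ using the chain of graphs $G^{\{1\}} \supseteq G^{\{1,2\}} \supseteq \cdots \supseteq G^{\{1,\dots,k-1\}} = G_k$. Since $u,v$ are connected in $G^{\{1\}} = G^1$ (as $S \subseteq \cC^1(u)$) but disconnected in $G_k$, there is a first $j$ where connectivity breaks; for this $j$ we have $u \xleftrightarrow{G^{\{1,\dots,j-1\}}} v$ and $u \stackrel{G^{\{1,\dots,j\}}}{\centernot\longleftrightarrow} v$, so with $I = \{1,\dots,j-1\}$ both ``$u\xleftrightarrow{G^I}v$'' and ``$u\stackrel{G^{I\cup\{j\}}}{\centernot\longleftrightarrow}v$'' hold automatically, and ``$u \xleftrightarrow{G^j} v$'' holds because $u,v \in S$ and $S$ is connected in $G^j$ (as $j \le k-1$). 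This sidesteps the intersection issue entirely, and the rest is the union bound and Lemma~\ref{lem:Ijconnect}.
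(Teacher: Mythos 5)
Your argument is correct and follows essentially the same route as the paper: reduce to Lemma~\ref{lem:Ijconnect} by extracting a pair $(I,j)$ with $u\xleftrightarrow{G^I}v$, $u\xleftrightarrow{G^j}v$, $u\stackrel{G^{I\cup\{j\}}}{\centernot\longleftrightarrow}v$ and $|\cC^j(u)|\ge|S|\ge M$. The only cosmetic difference is that the paper takes $I$ to be a maximal subset of $[k-1]$ with $u\xleftrightarrow{G^I}v$ and then picks any $j\in[k-1]\setminus I$, whereas you locate the first break in the nested chain $G^{\{1\}}\supseteq\cdots\supseteq G^{\{1,\dots,k-1\}}=G_k$; both correctly sidestep the issue of $S$ itself being connected in $G^I$, and your final union bound over the finitely many $j$ is exactly what is needed.
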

\begin{proof}
Fix $S\subseteq [n]$ of size at least $M$, and assume that $S$ is connected in each of the graphs $G^1,\dots,G^{k-1}$. If $S$ is not connected in $G_k$, it means that one can find two distinct vertices $u,v\in S$ which are not connected in $G_k$. Let $I$ be a maximal subset of $[k-1]$ such that $u$ and $v$ are connected in $G^I$. Since $u$ and $v$ are not connected in $G_k$, $I\neq [k-1]$, and hence there exists $j\in [k-1]\setminus I$ such that $u$ and $v$ are not connected in $G^{I\cup \{j\}}$. However, using that $\max(\lambda_I^*, \lambda_j^*) < 1$ and $|\cC^j(u)|\ge |S|\ge M$, Lemma~\ref{lem:Ijconnect} ensures that this event happens with probability converging to 0 as $M\to \infty$, uniformly in $n$.
\end{proof}

We are ready conclude the proofs of Theorem~\ref{thm:k>2}~\eqref{pt ii} and Proposition~\ref{prop:critical}.

\begin{proof}[Proof of Proposition~\ref{prop:critical}]
Denote by $\widetilde \cC_{\max}$ the largest CA-component in $G$. As the vertices of $\widetilde \cC_{\max}$ form a connected set in $G^i$ for every $i\in [k-1]$, by Lemma~\ref{lem:intersection} we have that uniformly in $n$,
\begin{equation}\label{eq.connected.Ck}
\lim_{M\to \infty}\ \Pr(|\widetilde \cC_{\max}|\ge M, \text{$\widetilde \cC_{\max}$ is not connected in $G_k$}) = 0.
\end{equation}
On the other hand, if $\widetilde \cC_{\max}$ is connected in $G_k$, it is obtained by intersecting a connected component in $G^k$ and one in $G_k$. Moreover, by definition the probability of having an edge present in $G^k$ is $1-\prod_{i=1}^{k-1}(1-\frac{\lambda_i}{n})\le \tfrac{\lambda_k^*}{n}=\tfrac{1}{n}$. Thus, $G^k$ is stochastically dominated by $G(n,1/n)$. However, it is well-known that the size of the largest component in $G(n, 1/n)$ is a.a.s. of order $n^{2/3}$ (see Proposition 5.2 in~\cite{Hof16}), in particular it is a.a.s.\ smaller than $n^{3/4}$, say. On the other hand, by a similar argument as in~\eqref{prob.cond.connect} one has that for any $n > \ell\ge M\ge 2$ and any distinct vertices $v_1,\dots,v_M\in [n]$, 
$$\mathbb P\big(v_M\in \cC^k(v_1)\mid |\cC^k(v_1)|=\ell, v_2,\dots,v_{M-1}\in \cC^k(v_1)\big) = \frac{\ell - (M-1)}{n-(M-1)}\le \frac{\ell}{n},$$ 
and the same holds with $\cC_k(v_1)$ instead of $\cC^k(v_1)$. Thus, for any $M\ge 2$ and any distinct $v_1,\dots,v_M\in [n]$, 
\begin{align*}
 \mathbb P\big(v_1,\dots,v_M \text{ are connected in }G^k, |\cC^k_{\max}|\le n^{3/4}\big)
& \le \sum_{\ell = M}^{n^{3/4}} \bigg(\frac{\ell}{n}\bigg)^{M-1} \cdot \mathbb P(|\cC^k(v_1)|=\ell) \\
& =\frac{\mathbb E[|\cC^k(v_1)|^{M-1}\cdot \mathds 1_{\{|\cC^k(v_1)|\le n^{3/4}\}}]}{n^{M-1}}\\
& \le n^{-\frac{M-1}{4}}.  
\end{align*}
Likewise, we know by~\eqref{LLN.subcritical} that a.a.s.\ the size of the largest connected component in $G_k$ is at most $\tfrac{2}{I_{\lambda_k}}\log n$, and 
as above one has
$$\mathbb P\big(v_1,\dots,v_M \text{ are connected in }G_k, |\cC_k(v_1)|\le \tfrac 2{I_{\lambda_k}}\log n\big) = \mathcal O\bigg(\frac{(\log n)^{M-1}}{n^{M-1}}\bigg).$$
Summing over all possible vertices $v_1,\dots,v_M\in [n]$ and using independence between $G^k$ and $G_k$, we deduce that 
\begin{align*}
&\mathbb P(\exists v_1,\dots,v_M\text{ connected in both }G_k \text{ and }G^k) \\
& \le \mathbb P\bigg(\max_{u\in [n]} |\cC^k(u)|\ge n^{3/4}\bigg) 
+ \mathbb P\bigg(\max_{u\in [n]} |\cC_k(u)|\ge \tfrac 2{I_{\lambda_k}} \log n\bigg) + \mathcal O\bigg(\frac{(\log n)^{M-1}}{n^{\frac{M-5}{4}}}\bigg)=o(1), 
\end{align*}
where the last equality holds as soon as $M\ge 6$. 
Together with~\eqref{eq.connected.Ck}, this concludes the proof of the proposition.
\end{proof}

\begin{proof}[Proof of Theorem~\ref{thm:k>2}~\eqref{pt ii}]
Recall that now $\lambda_k^*>1>\lambda_{k-1}^*$, and in particular, all graphs $G^i$ with $i \le k-1$ are subcritical while $G^k$ is supercritical. 
Firstly, we observe that there exists $\varepsilon>0$ such that a.a.s.\ the largest CA-component has size larger than $\varepsilon \log n$. Indeed, we know by~\eqref{LLN.subcritical} that a.a.s.\ there exists a connected component in $G_k$ of size at least $\log n/(2I_{\lambda_k})$, and hence~\eqref{LLN.giant} and Lemma~\ref{lem:stoch.giant} imply together that a.a.s.\ its intersection with the giant component of $G^k$ has size at least $\mu_{\lambda_k^*} \log n/(4I_{\lambda_k})$ (recall that $G^k$ and $G_k$ are independent). 

Next, let $\widetilde \cC_{\max}$ be the largest CA-component. By definition its vertices are connected in all the graphs $G^1,\dots,G^{k-1}$, and thus by Lemma~\ref{lem:intersection} a.a.s.\ they are also connected in $G_k$. This means that $\widetilde \cC_{\max}$ is in fact obtained as the intersection of a connected component of $G_k$ with one of $G^k$. However, it is well-known that a.a.s.\ all connected components in a supercritical ER random graph but the largest one have size $\cO(\log n)$  (see e.g. Section 4.4.1 in~\cite{Hof16}). Thus, by the same argument as in the proof of Proposition~\ref{prop:critical}, we deduce that the probability of having three vertices connected in $G_k$ and participating in the same non-giant component of $G^k$ is $\mathcal O(n^3 \cdot \frac{(\log n)^4}{n^4}) = o(1)$. Hence, a.a.s.\ every CA-component of size at least 3 (and $\widetilde \cC_{\max}$ in particular) is contained in the giant in $G^k$.

Finally, by Lemma~\ref{lem:stoch.giant} and Corollary~\ref{cor:LLN.inter.giant} we conclude that, with the notation of Corollary~\ref{cor:LLN.inter.giant},
$$\frac{|\widetilde \cC_{\max}|}{\log n} \xrightarrow[n\to \infty]{\mathbb P} a(\mu_{\lambda_k^*},\lambda_k),$$
which finishes the proof.
\end{proof}

\subsection{\texorpdfstring{Proof of Theorem~\ref{thm:k>2}~\eqref{pt iii}}{}}\label{sec:proof.iii}

We assume throughout this section that $\lambda_k^*<1$. 

We call \emph{support} of a CA-component the subgraph of $G$ obtained as the union of all paths in $G^1, \ldots, G^k$ between any pair of distinct vertices of the CA-component. The main observation of the proof is the following lemma.

\begin{lemma}\label{cor:support.cycle}
A.a.s.\ every CA-component is supported by either a single vertex, a single edge or a cycle of $G$. In particular, a.a.s.\ every CA-component has size at most $k$. 
\end{lemma}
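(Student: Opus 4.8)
The plan is to combine three ingredients that have already been prepared: the structural statement of Remark~\ref{rem:2 cycles} (connected subgraphs of $G$ of size $\mathcal O(\log n)$ have at most one cycle or one repeated edge), the boundedness statement coming from Lemma~\ref{lem:Ijconnect} (applied iteratively, all CA-components have bounded size a.a.s.), and a short combinatorial argument translating "support is a single vertex, a single edge, or a cycle" into "size at most $k$". First I would establish that a.a.s.\ every CA-component has bounded size. Fix a vertex $u$ in a CA-component $\widetilde\cC$ with $|\widetilde\cC|\ge 2$, and take $v\neq u$ in $\widetilde\cC$. For every $I\subseteq[k]$, $u$ and $v$ are connected in $G^I$ (since they are connected in each $G^i$, hence in the larger graph $G^I$). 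Starting from $I=\varnothing$ and greedily adding colors, let $I$ be a maximal subset with $u\xleftrightarrow{G^I}v$ via a path; since $u$ and $v$ are CA-connected, we must have $|I|\le k-2$ is impossible to be maximal unless $u$ and $v$ are in fact connected in $G$ itself — more carefully, one runs the argument of Lemma~\ref{lem:intersection}: if $v\notin\cC_{[k]}$-component of $u$... actually the cleanest route is: for each pair $(I,j)$ with $j\notin I$ and $\max(\lambda_I^*,\lambda_j^*)<1$ (which holds here since $\lambda_k^*<1$ forces $\lambda_i^*<1$ for all $i$, and $\lambda_I^*\le\lambda_k^*<1$ whenever $I\neq\varnothing$, while for $I=\varnothing$ one has $\lambda_\varnothing^*=\Lambda$, so one must be slightly careful and instead start the chain from a singleton or handle $I=\varnothing$ separately), Lemma~\ref{lem:Ijconnect} bounds the probability that there exist $u\neq v$ connected in $G^I$ and in $G^j$ but not in $G^{I\cup\{j\}}$ with $\max(|\cC^I(u)|,|\cC^j(u)|)\ge M$. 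Taking a union over the finitely many pairs $(I,j)$, a.a.s.\ no such configuration with a large component exists; consequently, walking up a maximal chain $\varnothing\subsetneq\{i_1\}\subsetneq\{i_1,i_2\}\subsetneq\cdots$ of colour sets in which $u,v$ stay connected, at each step the relevant component has size $<M$, so in particular $\widetilde\cC$ itself (contained in such a component) has size $<M$ for some absolute constant $M$.

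Once every CA-component has bounded size a.a.s., its support also has bounded size (a component of size $s$ has at most $\binom{s}{2}$ pairs, each joined by paths which, being shortest or at least finite in a component of bounded size... hmm — here I need that the support, i.e.\ the union of \emph{all} paths between pairs of vertices of $\widetilde\cC$ in $G^1,\dots,G^k$, is finite). Actually the support could a priori be large even if $\widetilde\cC$ is small: a path avoiding colour $i$ between two neighbours could be long. So here I would instead invoke the first part of Remark~\ref{rem:2 cycles} together with the observation that the support is connected (it is a union of paths all sharing vertices of $\widetilde\cC$, which is itself connected in each $G^i$) — but I still need its size to be $\mathcal O(\log n)$ to apply the remark. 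The resolution: by Lemma~\ref{lem:sizeERC}, a.a.s.\ every connected component of $G$ has size $\mathcal O(\log n)$ (since $G\sim G(n,(1+o(1))\Lambda/n)$ — wait, $\Lambda$ need not be $<1$). Hmm, $\Lambda$ can exceed $1$, so $G$ can have a giant component; but $\lambda_k^*<1$ forces each $G^i$ to be subcritical, though $G=G_1\cup\cdots\cup G_k$ itself can be supercritical. So I cannot bound the support's size by $\log n$ via a component-size bound on $G$. Instead I will bound the support directly: the support of $\widetilde\cC$ lies inside $\bigcup_{i}\cC^i(u)$ for $u\in\widetilde\cC$ — no, that's not right either since we also use colour-$i$ edges when avoiding colour $j\neq i$. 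The correct containment is: the support is contained in the connected component of $u$ in $G$. So I do need a bound on components of $G$.

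The fix, which I expect is the intended one: the second part of Remark~\ref{rem:2 cycles} and the counting Lemma~\ref{lem:2 cycles} are stated for $G$ with parameter $\Lambda$, and the first part of Remark~\ref{rem:2 cycles} gives a constant $c_1=c_1(\Lambda)$ such that a.a.s.\ every connected subgraph of $G$ of size $\le c_1\log n$ has at most one cycle (and no repeated edge) or at most one repeated edge (and no cycle). Combined with the a.a.s.\ bound $|\widetilde\cC|\le M$, and the fact that the support is a connected subgraph of $G$, it suffices to argue the support has size $\le c_1\log n$. Here I would use a first-moment / union-bound argument of the flavour already used in the proof of Lemma~\ref{lem:Ijconnect} and Proposition~\ref{prop:critical}: the event that two vertices $u,v$ at CA-distance... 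Actually, more simply: condition on $\widetilde\cC$ having size $\le M$; for any two of its vertices $u,v$ and any colour $i$, the shortest $G^i$-path between them has length at most $\operatorname{diam}$ of $\cC^i(u)$, and $|\cC^i(u)|=\mathcal O(\log n)$ a.a.s.\ by Lemma~\ref{lem:sizeERC} applied to each (subcritical) $G^i$. Since the support is the union of at most $k\binom{M}{2}$ such paths, it has size $\mathcal O(\log n)$ a.a.s. Now apply Remark~\ref{rem:2 cycles}(1): the support, being connected with $\mathcal O(\log n)$ vertices, contains at most one cycle and no repeated edge, or one repeated edge and no cycle. A tree support with $\ge 2$ vertices and no repeated edge cannot be a CA-support of a component of size $\ge 2$: removing any colour-$i$ edge on the tree disconnects it, contradicting CA-connectivity — so the support is either a single vertex, a single edge (necessarily repeated, i.e.\ bearing $\ge 2$ colours, this being the $\le M=2$ case), or the unique cycle. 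The main obstacle I anticipate is exactly this last structural step and getting the quantifiers right: showing that once the support is known to be unicyclic (or a repeated edge), CA-connectivity forces it to equal that cycle (or that edge) and that such a configuration has at most $k$ vertices (a cycle of length $\ell$ whose edges avoid, for each colour, a path — by a pigeonhole / parity argument on which arc of the cycle each colour uses, $\ell\le k$; the paper flags this with "all paths connecting two of its vertices and avoiding some colour are contained in a single cycle"). I would carry out: (1) a.a.s.\ bounded CA-components via iterated Lemma~\ref{lem:Ijconnect}; (2) a.a.s.\ support has $\mathcal O(\log n)$ vertices via Lemma~\ref{lem:sizeERC} on each $G^i$; (3) apply Remark~\ref{rem:2 cycles}(1) to get unicyclic-or-repeated-edge; (4) rule out trees and conclude support $\in\{$vertex, repeated edge, cycle$\}$; (5) a cycle support forces size $\le k$ by a colouring/pigeonhole argument — with (4)–(5) being the delicate part.
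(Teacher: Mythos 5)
Your overall strategy is the paper's: rule out large color-deleted components around the CA-component, conclude that the support is a small connected subgraph of $G$, invoke Remark~\ref{rem:2 cycles}(1) to see it is unicyclic or a single repeated edge, and finish with the pigeonhole argument on the arcs of the cycle. Your steps (4)--(5) are exactly the paper's Lemma~\ref{lemma:CA.cycle} argument, and your step (1) (the maximal-chain application of Lemma~\ref{lem:Ijconnect}, starting from a singleton to keep $\lambda_I^*<1$) is a correct variant of the paper's Lemma~\ref{lem:intersection}; the paper's own proof of Lemma~\ref{cor:support.cycle} instead gets the needed smallness by showing that a large $\cC^i(u)$ would be a tree without repeated edges (Remark~\ref{rem:2 cycles}(2)), so that deleting one more color on the unique $u$--$v$ path disconnects $u$ from $v$, contradicting Lemma~\ref{lem:Ijconnect} with $I=\{i\}$. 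Your observation that one must bound the \emph{support} via $\bigcup_i \cC^i(u)$ rather than via $\cC(u)$ in the possibly supercritical graph $G$ is the right reading of this step.

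There is, however, one concrete gap in your steps (2)--(3). You bound the support by $k\binom{M}{2}$ paths each inside a component of a subcritical $G^i$, hence by $C\log n$ with $C= \mathcal O\bigl(kM/\min_i I_{\lambda_i^*}\bigr)$ via Lemma~\ref{lem:sizeERC}; but Remark~\ref{rem:2 cycles}(1) only applies to connected sets of size at most $c_1(\Lambda)\log n$, and $c_1(\Lambda)$ must be taken smaller than $1/(1+\max(0,\log\Lambda))$, which can be far below your $C$ (e.g.\ $k=2$, $\lambda_1=\lambda_2=0.99$ gives $2/I_{\lambda_i^*}$ of order $10^4$ while $c_1<0.6$). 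The first-moment count behind Remark~\ref{rem:2 cycles}(1) genuinely fails above $c_1\log n$, so you cannot simply apply it to a set of size ``$\mathcal O(\log n)$''. The fix is the step you dropped: first show, via Remark~\ref{rem:2 cycles}(2) applied with $\varepsilon=c_1/k$ together with Lemma~\ref{lem:Ijconnect}, that a.a.s.\ every vertex $u$ of a CA-component of size at least $2$ satisfies $|\cC^i(u)|\le \tfrac{c_1}{k}\log n$ for all $i$ (a large $\cC^i(u)$ would be a tree with no repeated edges, and cutting the color of any edge on the unique $u$--$v$ path in it would disconnect $u$ from $v$ in $G^{\{i,j\}}$). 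Since the support is contained in $\bigcup_i\cC^i(u)$ (note $\cC^i(w)=\cC^i(u)$ for all $w\in\widetilde\cC$), this yields the bound $c_1\log n$ needed for Remark~\ref{rem:2 cycles}(1), after which the rest of your plan goes through.
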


\begin{proof}
Consider a CA-component $\widetilde \cC$ and assume that it is not reduced to a single vertex. Let $u$ and $v$ be two different vertices of $\widetilde \cC$. Assume first that $|\cC^i(u)|\ge \frac{c_1(\Lambda)}{k} \log n$ for some $i\in [k]$, with the notation of Remark~\ref{rem:2 cycles}. By the second point of this remark, and since $\lambda_i^*<1$ by hypothesis, we know that a.a.s.\ $\cC^i(u)$ is a tree with no repeated edge. In other words, $u$ and $v$ are connected by a unique path $P$ in $G^i$, and since $P$ contains no repeated edges, $u$ and $v$ cannot be connected in $G^{\{i,j\}}$ for any color $j$ in $P$. However, Lemma~\ref{lem:Ijconnect} applied for $I = \{i\}$ shows that a.a.s.\ this situation does not happen. Thus, we may assume that $|\cC^i(u)|\le \frac{c_1(\Lambda)}{k} \log n$ for all $i$, and by summation over $i$ we may as well assume that $\cC(u)$, the connected component of $u$ in $G$, has size at most $c_1(\Lambda)\log n$. Then, using the first result from Remark~\ref{rem:2 cycles}, we know that a.a.s. either $\cC(u)$ contains no cycles and at most one repeated edge or no repeated edges and at most one cycle. Moreover, note that for every pair of vertices $u$ and $v$ in $\widetilde \cC$, $u$ and $v$ cannot be disconnected in $G$ by deleting an edge with a single color in $\cC(u)$. Thus, the unique cycle or repeated edge necessarily supports $\widetilde \cC$, which concludes the proof of the first part.

For the second part, just observe that when $|\widetilde \cC|\ge 3$, the vertices in $\widetilde \cC$ divide its supporting cycle into paths without common colors, so there are at most $k$ such paths.
\end{proof}

For a positive integer $\ell$, we say that a cycle in $G$ is  \emph{separated into $\ell$ parts} if it can be divided into $\ell$ consecutive paths that use disjoint sets of colors. 
We say that it is separated into \emph{exactly} $\ell$ parts if it is separated into $\ell$ parts but not into $\ell+1$ parts. The following fact follows directly from the previous definition.

\begin{lemma}\label{lemma:CA.cycle}
Every cycle in $G$ supports at most one CA-component of size more than $1$. Moreover, a CA-component supported by a cycle has size $\ell$ if and only if its supporting cycle is separated into exactly $\ell$ parts.
\end{lemma}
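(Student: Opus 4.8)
The plan is to analyze directly the combinatorial structure of a cycle $C$ in $G$ whose edges carry colors, using the definition of "separated into $\ell$ parts" and the characterization of CA-components supported by a cycle. Throughout, fix a cycle $C = v_0 v_1 \cdots v_{m-1} v_0$ in $G$, and recall that by Lemma~\ref{cor:support.cycle} a CA-component of size at least $2$ supported by $C$ consists of some subset $\widetilde\cC = \{v_{i_1}, \ldots, v_{i_\ell}\} \subseteq V(C)$ with $\ell \ge 2$ (the vertices of $\widetilde\cC$ listed in cyclic order), whose removal divides $C$ into $\ell$ consecutive arcs $A_1, \ldots, A_\ell$, where $A_r$ runs between $v_{i_r}$ and $v_{i_{r+1}}$ (indices mod $\ell$).

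\emph{Step 1: translating CA-connectivity into a color condition on the arcs.} First I would argue that two vertices $v_{i_r}, v_{i_s}$ on $C$ are CA-connected using only paths inside $C$ if and only if for every color $j \in [k]$ there is an arc-path between them avoiding $j$; since the only two paths between them inside $C$ are the two complementary unions of consecutive arcs, this says precisely that for each color $j$, at least one of the two complementary arc-unions avoids $j$. The key observation is then: the vertex set $\{v_{i_1},\dots,v_{i_\ell}\}$ forms (the vertex set of) a CA-component supported by $C$ exactly when the arcs $A_1, \ldots, A_\ell$ use \emph{pairwise disjoint} sets of colors — i.e. $C$ is separated into $\ell$ parts by this partition. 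Indeed, if two arcs $A_r, A_s$ share a color $j$, then every path between a vertex before $A_r$ and a vertex after $A_s$ (going either way around) must traverse $A_r$ or $A_s$ and hence cannot avoid $j$, so those two vertices are not CA-connected; conversely, if all arcs have disjoint color sets, then for any two of the chosen vertices and any color $j$, at most one arc contains $j$, so one of the two complementary arc-unions avoids $j$, giving CA-connectivity. This also shows no vertex of $C$ outside $\{v_{i_1},\dots,v_{i_\ell}\}$ can join the CA-component: such a vertex lies in the interior of some arc $A_r$, and adding it would split $A_r$ but leave the color sets of all pieces inside the old color set of $A_r$ — wait, this needs care, so instead I would note that a vertex $w$ in the interior of $A_r$ is CA-connected to $v_{i_r}$ only if the arc from $v_{i_r}$ to $w$ and its complement satisfy the avoidance condition, and in fact $w$ belongs to the CA-component iff the induced finer partition is still into disjoint color sets, which would make the original partition refinable, contradicting that $\widetilde\cC$ is a full CA-component.

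\emph{Step 2: the two assertions.} For the first assertion (at most one CA-component of size $>1$ per cycle), suppose $C$ supported two distinct CA-components $\widetilde\cC \ne \widetilde\cC'$, each of size $\ge 2$. Since CA-components are equivalence classes, they are disjoint; pick $u \in \widetilde\cC$ and $u' \in \widetilde\cC'$. Both lie on $C$, and by Step 1 the arcs cut out by $\widetilde\cC$ have pairwise disjoint colors, as do those cut out by $\widetilde\cC'$. But then I would check that $u$ and $u'$ must actually be CA-connected: $u'$ lies in the interior of some arc $A_r$ of the $\widetilde\cC$-partition; the $\widetilde\cC'$-partition refines the relevant structure enough that $u$ and $u'$ satisfy the color-avoidance condition — concretely, the union of the two partitions' cut-points gives a common refinement into consecutive arcs whose color sets are pairwise disjoint (each such arc sits inside an arc of each partition), and by the converse direction of Step 1 all the cut-points of this refinement lie in one CA-component, forcing $\widetilde\cC = \widetilde\cC'$, a contradiction. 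For the second assertion: if $\widetilde\cC$ has size $\ell$, Step 1 says $C$ is separated into $\ell$ parts by the $\widetilde\cC$-partition; and it is not separated into $\ell+1$ parts, since any finer separation into $\ell+1$ arcs with pairwise disjoint colors would, by the converse of Step 1, produce a CA-component containing all $\ell+1$ cut-vertices, hence strictly larger than $\widetilde\cC$ but overlapping it — contradicting that $\widetilde\cC$ is a maximal CA-component (equivalence class). Conversely, if $C$ is separated into exactly $\ell$ parts, the $\ell$ cut-points form a CA-component (by Step 1's converse), which is maximal precisely because no separation into $\ell+1$ parts exists, so the CA-component has size exactly $\ell$.

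\emph{Main obstacle.} I expect the delicate point to be the "no extra vertices / maximality" bookkeeping in Step 1 and in the uniqueness argument — i.e. carefully proving that taking a common refinement of two valid separations (or adding an interior vertex of an arc) is consistent with CA-connectivity, so that any two valid separations of $C$ are comparable and there is a unique \emph{coarsest nontrivial} one. This is a purely combinatorial lemma about cyclic sequences of color-sets: the valid separations of $C$ (into arcs with pairwise disjoint colors) form a nice poset under refinement, with a unique maximal element when any nontrivial separation exists, and that maximal element's cut-set is exactly the CA-component. Once that poset structure is pinned down, both assertions of the lemma are immediate corollaries, and the rest is routine.
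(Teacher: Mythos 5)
Your proposal follows the same route as the paper's proof: identify a CA-component supported by a cycle with a set of cut-points whose arcs carry pairwise disjoint colour sets, and derive uniqueness by passing to the common refinement of two such cut-sets. The gap sits exactly at the step you flag as the main obstacle, and it cannot be repaired: it is \emph{not} true that the common refinement of two valid separations is again a valid separation. Your justification --- ``each refined arc sits inside an arc of each partition'' --- yields disjointness of the colour sets of two refined arcs only when they lie in \emph{different} arcs of at least one of the two partitions. If the two cut-sets do not interleave around the cycle, two refined arcs can lie in the same arc of partition~1 \emph{and} in the same arc of partition~2, and then nothing forces their colour sets apart.

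Concretely, take $k=4$ and an isolated cycle $v_0v_1\cdots v_7v_0$ with edge colours $c(v_0v_1)=1$, $c(v_1v_2)=c(v_2v_3)=2$, $c(v_3v_4)=1$, and $c(v_iv_{i+1})=4$ for $i=4,\dots,7$. The cut-set $\{v_0,v_4\}$ gives arcs with colour sets $\{1,2\}$ and $\{4\}$, and $\{v_1,v_3\}$ gives arcs with colour sets $\{2\}$ and $\{1,4\}$; both are valid separations into two parts, and a direct check of the components of $G^1,G^2,G^3,G^4$ shows $\widetilde\cC(v_0)=\{v_0,v_4\}$ and $\widetilde\cC(v_1)=\{v_1,v_3\}$ (removing colour~$1$ already separates $v_0$ from both $v_1$ and $v_3$), with both CA-components having the full cycle as support. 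So one cycle supports two distinct CA-components of size $2$, while the union $\{v_0,v_1,v_3,v_4\}$ cuts the cycle into arcs with colour sets $\{1\},\{2\},\{1\},\{4\}$, which are not pairwise disjoint. This refutes the refinement step and, in fact, the first assertion of the lemma itself; since a cycle with this colour pattern occurs in $G$ with probability bounded away from $0$, the claim also fails as an a.a.s.\ statement. You should not feel singled out --- the identical one-line refinement claim is the crux of the paper's own proof, and the refinement does go through when the two cut-sets interleave, which is simply not guaranteed. The remainder of your Step~1 (mutual CA-connectivity of the cut-points is equivalent to pairwise disjoint arc colours) is correct.
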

\begin{proof}
Suppose that $\widetilde \cC_1$ and $\widetilde \cC_2$ are two distinct CA-components of sizes $\ell_1,\ell_2\ge 2$, respectively, which are supported by the same cycle, say $C$. Then, $\widetilde \cC_1$ and $\widetilde \cC_2$ must be disjoint. Moreover, the vertices of $\cC_1$ separate $C$ into $\ell_1$ parts, and the ones of $\cC_2$ separate $C$ into $\ell_2$ parts. It follows that the vertices of $\cC_1\cup \cC_2$ separate $C$ into $\ell_1+\ell_2$ parts, and so $\cC_1\cup \cC_2$ is a CA-component itself, a contradiction.

Thus, one cycle can support at most one CA-component. At the same time, if it supports a CA-component of size $\ell \ge 2$, it cannot be divided into $\ell+1$ parts as otherwise it 
would also support a CA-component of size more than $\ell$, which finishes the proof.
\end{proof}

The last important piece towards the proof of Theorem~\ref{thm:k>2}~\eqref{pt iii} is the following lemma.

\begin{lemma}\label{lem:separated cycles}
For every $m\in \{2,\dots,k\}$, denote by $Y_m$ the number of cycles in $G$ that are separated into exactly $m$ parts. Then, there are positive constants $\widetilde \beta_2$ and $\beta_3, \ldots, \beta_k$, such that 
\[(Y_2, \ldots, Y_k)\xrightarrow[n\to \infty]{d} \Po(\widetilde \beta_2)\otimes \bigotimes_{m=3}^k \Po(\beta_m).\]
\end{lemma}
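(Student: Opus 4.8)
The plan is to reduce the statement to Corollary~\ref{cor:Bol} by analyzing, for each fixed cycle length, into how many parts a uniformly colored cycle is separated. Concretely, for $m\geq 3$ write $C_m$ for the number of cycles of length $m$ in $G$ (including a contribution from repeated edges when $m=2$, as in Corollary~\ref{cor:Bol}), which by that corollary jointly converge to independent Poissons with parameters $\gamma_2=\tfrac12\sum_{i<j}\lambda_i\lambda_j$ and $\gamma_m=\Lambda^m/(2m)$ for $m\geq 3$. The key point is that, conditionally on the length-$m$ cycles present, each such cycle has its $m$ edges colored i.i.d.\ with color $i$ having probability $\lambda_i/\Lambda$ (by the description of the local limit, and because a.a.s.\ these short cycles are vertex-disjoint and carry no repeated edges for $m\geq 3$ — the latter handled exactly as in the proof of Corollary~\ref{cor:Bol}). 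For a cyclically ordered word of $m$ colors from $[k]$, being ``separable into $\ell$ parts'' is a purely combinatorial property: the cycle splits into $\ell$ arcs with pairwise disjoint color sets. Let $p_{m,\ell}$ be the probability that an i.i.d.\ color word of length $m$ (with the above marginals) is separable into exactly $\ell$ parts; this is a fixed constant depending only on $m,\ell$ and $(\lambda_i/\Lambda)_{i\in[k]}$.

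Next I would set up a thinning/colouring argument. Since each cycle is, independently of all others, assigned a ``type'' $\ell\in\{1,\dots,k\}$ (the exact number of parts it separates into) with probabilities $(p_{m,\ell})_\ell$, and since $C_m\xrightarrow{d}\Po(\gamma_m)$ independently across $m$, the standard Poisson thinning/superposition lemma gives that the counts $(Y_m)_{m=2}^k$ — where $Y_m=\sum_{j\geq \max(m,2)} Y_m^{(j)}$ and $Y_m^{(j)}$ counts length-$j$ cycles separated into exactly $m$ parts — converge jointly in distribution to independent Poisson variables. The parameter of the limit for $Y_m$ is
\[
\beta_m \;=\; \sum_{j\geq \max(m,3)} \gamma_j\, p_{j,m}\;+\;\mathbf 1_{\{m=2\}}\,\gamma_2\, p_{2,2},
\]
with the understanding that a ``length-$2$ cycle'' is a repeated edge, which is automatically separated into exactly $2$ parts, so the $m=2$ case gets an extra $\gamma_2$ term — this is why the statement singles out $\widetilde\beta_2$. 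To make the thinning rigorous with an infinite sum over $j$, I would truncate: fix a large $J$, apply the finite-dimensional convergence of $(C_2,\dots,C_J)$ together with the independent colouring to get convergence of the truncated counts, and then control the tail $\sum_{j>J}\gamma_j<\infty$ (which converges since $\Lambda$ is fixed and $\gamma_j=\Lambda^j/(2j)$, but wait — this diverges; instead one uses that for the relevant regime $\lambda_k^*<1$ forces... ) — more carefully, one truncates at $J=J(\eps)$ using that a.a.s.\ there is no cycle of length in a growing window contributing, i.e.\ one shows $\Pr(\exists\text{ cycle of length}\in(J,\,c\log n]\text{ separated into }\leq k\text{ parts})\to 0$ as $J\to\infty$ uniformly in $n$ (a first-moment bound, since a cycle separated into at most $k$ parts has at most $k$ ``colour-change'' positions, and such structured long cycles are rare), combined with Lemma~\ref{lem:2 cycles} to rule out cycles longer than $c_1\log n$ inside small components. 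Finally I would argue $\beta_m>0$: e.g.\ a triangle with all three edges distinct colours is separated into exactly $3$ parts when $k\geq 3$, giving $p_{3,3}>0$ hence $\beta_3>0$, and similarly an $m$-cycle with a suitable colouring realizes each value $m\leq k$.

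The main obstacle I expect is the uniform-in-$n$ tail control needed to pass from the finite-dimensional convergence of short-cycle counts to a statement about the (random, unbounded) set of all cycles: one must show that cycles of length between a large constant $J$ and $c_1\log n$ that happen to be separated into at most $k$ parts contribute nothing in the limit, uniformly in $n$, and that longer cycles live only in components large enough to be excluded by Lemma~\ref{lem:2 cycles} / Remark~\ref{rem:2 cycles}. A clean way to organize this: (a) a.a.s.\ every cycle of $G$ lies in a component of size $\mathcal O(\log n)$, so every cycle has length $\mathcal O(\log n)$; (b) by a union bound, the expected number of cycles of length $j\in(J,c_1\log n]$ that are separable into at most $k$ parts is $\mathcal O\big(\sum_{j>J} j^{k}\, \rho^{j}\big)$ for some $\rho<1$ coming from the ``few colour changes'' constraint, which is $o_J(1)$; and (c) the remaining finitely-truncated picture is exactly Corollary~\ref{cor:Bol} plus i.i.d.\ edge colouring plus Poisson thinning. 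Assembling (a)--(c) with a standard $\eps$-$\delta$ truncation argument yields the claimed joint Poisson convergence. The identification of $\widetilde\beta_k$ (done in Remark~\ref{rem:betas}) comes out of computing $p_{j,k}$ explicitly for the smallest relevant $j$.
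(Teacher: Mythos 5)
Your proposal is correct and follows essentially the same route as the paper: truncate to cycles of bounded length, control the tail uniformly in $n$ by a first-moment bound that exploits $\lambda_k^*<1$ (a separated cycle forces each arc to use a palette missing at least one color, giving a geometric factor $(\lambda_k^*)^\ell$ that beats the polynomial count of arc decompositions — your self-correction after noting that $\sum_j \gamma_j$ diverges lands on exactly the paper's estimate), and then combine the Poisson limit for bounded-length cycle counts with i.i.d.\ edge colors and Poisson thinning. The only cosmetic discrepancy is bookkeeping: in the lemma $Y_2$ does \emph{not} include repeated edges (those are added only later, when passing to $N_2=Y_2+C_2$ and $\beta_2=\widetilde\beta_2+\gamma_2$), and your detour through components of size $\mathcal O(\log n)$ is unnecessary since the first-moment tail bound already covers all cycle lengths up to $n$.
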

\begin{proof}[Proof of Lemma~\ref{lem:separated cycles}]
For every $m\in \{2,\ldots, M\}$, denote by $Y_{m,M}$ the number of cycles in $G$ that are separated into exactly $m$ parts and having length at most $M$.

The first step of the proof is to show that for every $m\in \{2,\dots,k\}$, uniformly in $n$, 
\begin{equation}\label{conv.Ym.L1}
    \mathbb E[Y_m-Y_{m,M}] \xrightarrow[M\to \infty]{}  0.
    \end{equation}
To show this, note that a cycle of $G$ is separated into (at least) two parts if and only if there exists a nonempty subset $I\subseteq [k]$ different from $[k]$ such that one part of the cycle is contained in $G_I$ while the other part of the cycle is contained in $G^I$. For every $\ell \ge 3$, set $C_{2,\ell}$ to be the number of cycles of length $\ell$ in $G$ which are separated into (at least) two parts.

Then, using that to form a cycle of length $\ell$, one may choose its vertices in $\binom{n}{\ell}$ ways and order them in $\frac{(\ell-1)!}{2}$ ways, we have
\begin{equation}\label{eq:lem cycles}
\mathbb E [C_{2,\ell}] \le \binom{n}{\ell} \frac{(\ell-1)!}{2} \sum_{I\subseteq [k]} \sum_{m=1}^{\ell-1} \left(\frac{\sum_{i\in I} \lambda_i}{n}\right)^m \left(\frac{\sum_{i\in [k]\setminus I} \lambda_i}{n}\right)^{\ell-m}\le 2^k\cdot  (\lambda_k^*)^\ell.
\end{equation}
It follows that
\begin{equation*}
\mathbb E\left[Y_m - Y_{m,M}\right] \le \sum_{\ell= M+1}^n \mathbb E [C_{2,\ell}]\le 2^k  \sum_{\ell=M+1}^\infty (\lambda_k^*)^{\ell},
\end{equation*}
which goes to $0$ as $M\to \infty$ uniformly in $n$ since $\lambda_k^*<1$ by hypothesis, thus proving~\eqref{conv.Ym.L1}. 

The second step of the proof is to show that for every fixed $M\ge k$, one has 
\begin{equation}\label{conv.law.cycles}
(Y_{2,M},  \ldots, Y_{k,M})\xrightarrow[n\to \infty]{d} \bigotimes_{m=2}^k \Po(\beta_{m,M})
\end{equation}
for some positive constants $(\beta_{m,M})_{m=2}^k$. 
For $m \le k$, denote by $p_{m,\ell}$ the probability that a cycle 
of length $\ell$ in $G$ is separated into exactly $m$ parts. Let also $\widetilde Y_{m,\ell}$ denote the number of cycles of length $\ell$ which are separated into exactly $m$ parts. In particular,
$$Y_{m,M} = \sum_{\ell = \max(m,3)}^M \widetilde Y_{m,\ell}. $$ 
Adopting the notation of Lemma~\ref{lem:Bol}, observe also that for every $m\le k$, conditionally on $G$, 
$$\widetilde Y_{m,\ell}\ \mathop{=}^d\  \text{Bin}(C_\ell,p_{m,\ell}).$$  
Moreover, recall that $G$ is distributed as an Erd\H{o}s-R\'enyi random graph with parameters $n$ and
$p=(1+o(1))\cdot\tfrac{\Lambda}{n}$,  
so Lemma~\ref{lem:Bol} shows that 
$$(C_3,\dots,C_M) \xrightarrow[n\to \infty]{d} \bigotimes_{\ell = 3}^M Z_\ell$$ 
where for every $\ell\in \{3,\dots,M\}$, $Z_\ell$ is a Poisson random variable with parameter $\gamma_\ell=\tfrac{\Lambda^\ell}{2\ell}$. It follows that for every fixed $\ell$, (still writing with a slight abuse of notation $p_{m,\ell}$ for the limiting value of this probability as $n\to \infty$, see  Remark~\ref{rem:betas} for an explicit expression when $m=k$), 
$$(\widetilde Y_{2,\ell}, \dots, \widetilde Y_{k,\ell})\xrightarrow[n\to \infty]{d} 
\Big(\text{Bin}(Z_\ell,p_{2,\ell}), \dots,\text{Bin}(Z_\ell,p_{k,\ell})\Big), $$
which by the thinning property of the Poisson distribution (see e.g. Section 5.3 in~\cite{LP17}) is a vector of independent Poisson variables with parameters $(p_{m,\ell}\cdot \gamma_\ell)_{m=2}^\ell$. Summing over $\ell$ and using the independence of the variables $(Z_\ell)_{\ell= 3}^M$, we deduce that \eqref{conv.law.cycles} holds with 
$$\beta_{m,M} = \sum_{\ell = \max(m,3)}^M p_{m,\ell}\cdot \gamma_\ell.$$
The final step is to show that for every $m\in \{2,\dots,k\}$, the sequence $(\beta_{m,M})_{M\ge 3}$ is a bounded non-decreasing sequence, which therefore converges as $M\to \infty$ to some positive and finite constant. The fact that it is non-decreasing is straightforward by definition. On the other hand, by~\eqref{eq:lem cycles} we deduce that for every $m\in \{2,\dots,k\}$ and $M\ge 1$, 
$$\beta_{m,M} \le \liminf_{n\to \infty}\  \mathbb E[Y_{m,M}]\le \liminf_{n\to \infty} \ \mathbb E\left[\sum_{\ell=3}^{M} C_{2,\ell}\right]\le \frac{2^k}{1-\lambda_k^*},$$
showing that the sequence $(\beta_{m,M})_{M\ge 3}$ is bounded, which completes the proof.
\end{proof}

To finish the proof of Theorem~\ref{thm:k>2}~\eqref{pt iii}, note that by Lemma~\ref{cor:support.cycle} a.a.s.\ for every $m\ge 3$ we have $N_m = Y_m$, while $N_2$ is the sum of $Y_2$ and the repeated edges in $G$. Thus, using the notation of Lemma~\ref{lem:separated cycles} and Corollary~\ref{cor:Bol}, Theorem~\ref{thm:k>2}~\eqref{pt iii} follows with the constants $\beta_2 = \widetilde \beta_2 + \gamma_2$ and $(\beta_m)_{m=3}^k$.\hfill $\square$

\begin{remark}\label{rem:betas}
We note that while it is possible to provide explicit expressions for $\beta_2,\dots,\beta_k$ in terms of $\lambda_1,\dots,\lambda_k$, they tend to be more and more complicated as $\ell$ decreases from $k$ to $2$. However, one can provide a simple formula for $\beta_k$. We do this in the case $k\ge 3$; in fact, with the notation of Corollary~\ref{cor:Bol}, in the case $k=2$ one simply needs to add $\gamma_2$ to the final result to account for the number of repeated edges. 

By the previous proof, one has 
$$\beta_k = \sum_{\ell \ge k} p_{k,\ell} \cdot \gamma_\ell,$$ 
where $p_{k,\ell}$ is the limit (as $n\to \infty$) of the probability that a cycle of length $\ell$ in $G$ is separated into exactly $k$ parts, and $\gamma_\ell = \tfrac{\Lambda^\ell}{2\ell}$. To compute $p_{k,\ell}$, one needs to decide the lengths $s_1,\dots,s_k\ge 1$ of the portions of the cycle in colors $1,\dots,k$, respectively (with the constraint that $s_1+\dots+s_k=\ell$). Then, one needs to choose the starting vertex of the path colored in color $1$ (say when turning clockwise), for which there are $\ell$ choices, and the order of appearance of the other colors, for which there are $(k-1)!$ choices. Finally, note that as $n\to \infty$, the probability that an edge is colored in color $i$ tends to $\lambda_i/\Lambda$, which in total yields the formula 
$$p_{k,\ell} = \ell  (k-1)!\cdot \sum_{s_1+\dots+s_k=\ell}  \prod_{i=1}^k \left(\frac{\lambda_i}{\Lambda}\right)^{s_i}.$$ 
Altogether this gives 
$$\beta_k = \sum_{\ell \ge k} p_{k,\ell}\cdot \gamma_\ell = \frac{(k-1)!}{2}\sum_{\ell \ge k} \sum_{s_1+\dots+s_k=\ell}
\prod_{i=1}^k \lambda_i^{s_i} = \frac{(k-1)!}{2}\prod_{i=1}^k \Bigg(\sum_{j=1}^{\infty}
\lambda_i^j\Bigg) = \frac{(k-1)!}{2}\prod_{i=1}^k \frac{\lambda_i}{1-\lambda_i},$$ 
remembering for the third equality that the sum runs over indices $\{s_i\}_{i\in [k]}$ larger than or equal to $1$. 

For completeness, let us mention another slightly different way to compute $\beta_k$. Note first that since CA-components are supported by cycles or single edges, the expected number of CA-components of size $k$, or equivalently of cycles which are separated in exactly $k$ parts, is equal to 
$$\frac {1+o(1)}2 \sum_{i_1,\dots,i_k\in [k]} \sum_{u_1, \dots , u_k\in [n]}
\mathbb P(u_1 \xleftrightarrow{G_{i_1}} u_2, \dots, u_k\xleftrightarrow{G_{i_k}} u_1),$$ 
where the two sums run over $k$-tuples of ordered pairwise distinct elements of
$[k]$ and $[n]$, respectively, with $i_1=1$ (the factor $1/2$ coming from the fact that there are two possible ways to orient a cycle). Now, recall that for any pair of distinct vertices $u,v \in [n]$ and any $i\in [k]$, by~\eqref{prob.cond.connect} one has that
$$\mathbb P(v\in \cC_i(u)) = \frac{\mathbb E[|\cC_i(u)|-1]}{n-1}.$$
Thus, by induction we get that for any $(i_1,\dots,i_k)$,
\begin{align*}
\sum_{u_1,\dots,u_k} \mathbb P(u_1 \xleftrightarrow{G_{i_1}} u_2, \dots, u_k\xleftrightarrow{G_{i_k}} u_1)& = \frac{\mathbb E[|\cC_{i_k}(1)|-1]}{n-1}\cdot \sum_{u_1,\dots,u_k}
\mathbb P(u_1 \xleftrightarrow{G_{i_1}} u_2, \dots, u_{k-1}\xleftrightarrow{G_{i_{k-1}}} u_k) \\
& = n(n-1)\dots(n-k+1)\prod_{i=1}^k \frac{\mathbb E[|\cC_i(1)|-1]}{n-1}\\
& = (1+o(1)) \prod_{i=1}^k \mathbb E[|\cC_i(1)|-1], 
\end{align*}
where for the second equality we use that the number of choices for the sequence $(u_1,\dots,u_k)$ is $n(n-1)\dots(n-k+1)$. The formula follows since there are $(k-1)!$ ways to choose $i_2,\dots,i_k$, and with the notation of Section~\ref{sec:proof.i} 
$$\mathbb E[|\cC_i(1)|-1] = (1+o(1))\cdot \mathbb E[|\mathbf{GW}(\lambda_i)|-1] = (1+o(1))\cdot  \frac{\lambda_i}{1-\lambda_i},$$
where the last equality is derived from the fact that for every $d\ge 1$, the expected number of vertices at distance exactly $d$ from the root in $\mathbf{GW}(\lambda_i)$ is $\lambda_i^d$.
\end{remark}

\section{Conclusion}\label{sec:discussion}

In this paper, we characterized precisely the size of the largest CA-component in randomly colored Erd\H{o}s-R\'enyi random graphs in the entire supercritical and subcritical regimes, and in part of the intermediate regime as well. The most obvious open question that we leave concerns the size of the largest CA-component when $\lambda_{k-1}^* \ge 1 > \lambda_1^*$. The additional difficulty this point presents compared to the second part of Theorem~\ref{thm:k>2} is that in general, one cannot obtain the largest CA-component as an intersection of two independent random graphs. Nevertheless, we conjecture that an analogue of Theorem~\ref{thm:k>2}~\eqref{pt ii} holds in this case as well. Unfortunately, confirming this fact seems to be out of reach with our present techniques even in the simplest case when $k=3$. We remark that when $\lambda_m^* < 1$, by a statement similar to Lemma~\ref{lem:intersection} (that is also proved in a similar way) one may reduce the problem to the case of $k-m+1$ colors where $G^2,\ldots,G^{k-m+1}$ are all supercritical graphs while $G^1$ is subcritical. 

In the critical case when $\lambda_1^*=1<\lambda_2^*$, we suspect that the size of the largest CA-component divided by $n^{2/3}$ converges in distribution towards a non-degenerate random variable. The reason is that 
in $G^1$, the size of the largest component divided by $n^{2/3}$ converges in distribution, and the largest components in $G^2,\ldots,G^k$ are of linear order. However, the lack of independence makes it difficult to turn this heuristic into a rigorous proof.

Another possible direction could be to 
explore the case of other classical random graphs, or the closely related model of randomly vertex-color-avoiding random graph, which was initially considered in the literature~\cite{KDZ16,KDZ17} (and in which, as its name suggests, we color the vertices of the graph instead of the edges). In particular, as suggested in~\cite{KDZ16}, it could be interesting to study the effect of clustering (typical for random graphs with power law degree distributions, for example), as it arises in numerous recently introduced real-world networks models. Indeed, in this case, removing vertices with large degree could have a dramatic effect on the connectivity properties of the graph. 

\vspace{0.3cm}
\noindent \textbf{Acknowledgments.} We thank Dieter Mitsche for enlightening discussions, Bal\'azs R\'ath for a number of comments and corrections on a first version of this paper, and an anonymous referee for several useful remarks.

\bibliographystyle{plain}
\bibliography{Refs}

\begin{thebibliography}{10}

\bibitem{Bol01}
B.~Bollob\'as.
\newblock {\em Random Graphs}.
\newblock Cambridge Studies in Advanced Mathematics. Cambridge University
  Press, 2nd edition, 2001.

\bibitem{Cur17}
N.~Curien.
\newblock {R}andom graphs: the local convergence point of view, 2017.

\bibitem{DZ09}
A.~Dembo and O.~Zeitouni.
\newblock {\em Large deviations techniques and applications}, volume~38.
\newblock Springer Science \& Business Media, 2009.

\bibitem{RVFM22.0}
P.~T. Fekete, R.~Molontay, B.~R{\'a}th, and K.~Varga.
\newblock Color-avoiding percolation in edge-colored {E}rd{\H{o}}s--{R}{\'e}nyi
  graphs.
\newblock arXiv:2208.12727v3, 2022.

\bibitem{RVFM22}
P.~T. Fekete, R.~Molontay, B.~R{\'a}th, and K.~Varga.
\newblock Color-avoiding percolation and branching processes.
\newblock {\em Journal of Applied Probability}, pages 1--25, 2024.

\bibitem{Gri06}
G.~R. Grimmett.
\newblock {\em The random-cluster model}, volume 333.
\newblock Springer Science \& Business Media, 2006.

\bibitem{KKCZ18}
A.~Kadovi{\'c}, S.~M. Krause, G.~Caldarelli, and V.~Zlatic.
\newblock Bond and site color-avoiding percolation in scale-free networks.
\newblock {\em Physical Review E}, 98(6):062308, 2018.

\bibitem{KS66}
H.~Kesten and B.~P. Stigum.
\newblock A limit theorem for multidimensional galton-watson processes.
\newblock {\em The Annals of Mathematical Statistics}, 37(5):1211--1223, 1966.

\bibitem{KDZ16}
S.~M. Krause, M.~M. Danziger, and V.~Zlati{\'c}.
\newblock Hidden connectivity in networks with vulnerable classes of nodes.
\newblock {\em Physical Review X}, 6(4):041022, 2016.

\bibitem{KDZ17}
S.~M. Krause, M.~M. Danziger, and V.~Zlati{\'c}.
\newblock Color-avoiding percolation.
\newblock {\em Physical Review E}, 96(2):022313, 2017.

\bibitem{LP17}
G.~Last and M.~Penrose.
\newblock {\em Lectures on the Poisson process}, volume~7.
\newblock Cambridge University Press, 2017.

\bibitem{Hof16}
R.~van~der Hofstad.
\newblock {\em Random graphs and complex networks, Volume {I}}, volume~43.
\newblock Cambridge university press, 2016.

\bibitem{Hof16.2}
R.~van~der Hofstad.
\newblock {\em Random graphs and complex networks, Volume {II}}.
\newblock Book in preparation. Preliminary version available at
  \url{https://www.win.tue.nl/~rhofstad/}, 2022.

\end{thebibliography}

\end{document}